\def\doctype{}
\newcommand\Z{\mathbb{Z}}
\newcommand\R{\mathbb{R}}
\newcommand{\comment}[1]{}
\newcommand\vb{\mathbf{b}}
\newcommand\vd{\mathbf{d}}
\newcommand\ve{\mathbf{e}}
\newcommand\vj{\mathbf{j}}
\newcommand\vv{\mathbf{v}}
\newcommand\vx{\mathbf{x}}
\newcommand\Sym{\mathcal{S}}
\newcommand\rank{\mathrm{rank}}
\DeclareMathOperator{\sneq}{\cong \hspace{-9pt}\tiny\mbox{$/$} \hspace{3pt}}
\newcommand\posbox[2]{
\filldraw[blue!30!white] (#1,5-#2)--(#1,6-#2)--(#1+1,6-#2)--(#1+1,5-#2);
\node at (#1+0.5,5.5-#2) {\footnotesize $+$};
}
\newcommand\negbox[2]{
\filldraw[red!30!white] (#1,5-#2)--(#1,6-#2)--(#1+1,6-#2)--(#1+1,5-#2);
\node at (#1+0.5,5.5-#2) {\footnotesize $-$};
}
\newcommand\framesix{%
\foreach \a in {0,1,...,6}
\draw (\a,0)--(\a,6);
\foreach \a in {0,6}
\draw[line width=1pt] (\a,0)--(\a,6);
\foreach \a in {0,1,...,6}
\draw (0,\a)--(6,\a);
\foreach \a in {0,6}
\draw[line width=1pt] (0,\a)--(6,\a);
}
\newcommand\rcscope[2]{%
\begin{scope}[scale=0.25]
\foreach \x/\y in {#1}
 \posbox{\x}{\y};
\foreach \x/\y in {#2}
 \negbox{\x}{\y};
\framesix;
\draw[line width=1pt] (3,0)--(3,6);
\draw[line width=1pt] (0,2)--(6,2);
\draw[line width=1pt] (0,4)--(6,4);
\end{scope}
}
\newcommand\rsscope[2]{%
\begin{scope}[xshift=2cm,scale=0.25]
\foreach \x/\y in {#1}
 \posbox{\x}{\y};
\foreach \x/\y in {#2}
 \negbox{\x}{\y};
\framesix;
\draw[line width=1pt] (0,2)--(6,2);
\draw[line width=1pt] (0,4)--(6,4);
\end{scope}
}
\newcommand\csscope[2]{%
\begin{scope}[yshift=-2cm,scale=0.25]
\foreach \x/\y in {#1}
 \posbox{\x}{\y};
\foreach \x/\y in {#2}
 \negbox{\x}{\y};
\framesix;
\draw[line width=1pt] (3,0)--(3,6);
\end{scope}
}
\newcommand\bsscope[2]{%
\begin{scope}[yshift=-2cm,xshift=2cm,scale=0.25]
\foreach \x/\y in {#1}
 \posbox{\x}{\y};
\foreach \x/\y in {#2}
 \negbox{\x}{\y};
\framesix;
\end{scope}
}
\numberwithin{equation}{section}
\let\oldsection\section
\newcommand\boldsection[1]{\oldsection{\bf #1}}
\newcommand\starsection[1]{\oldsection*{\bf #1}}
\renewcommand\section{\@ifstar\starsection\boldsection}
\newtheoremstyle{theorem}
  {12pt}		  
  {0pt}  
  {\sl}  
  {\parindent}     
  {\bf}  
  {. }    
  { }    
  {}     
\theoremstyle{theorem}
\newtheorem{thm}{Theorem}[section]  
\newtheorem{lemma}[thm]{Lemma}     
\newtheorem{prop}[thm]{Proposition}
\newtheoremstyle{definition}
  {12pt}		  
  {0pt}  
  {}  
  {\parindent}     
  {\bf}  
  {. }    
  { }    
  {}     
\theoremstyle{definition}
\renewcommand{\proofname}{Proof}
\renewenvironment{proof}[1][\proofname]{\par
  \pushQED{\qed}%
  \normalfont \partopsep=\z@skip \topsep=\z@skip
  \trivlist
  \item[\hskip\labelsep
        \scshape
    #1\@addpunct{.}]\ignorespaces
}{%
  \popQED\endtrivlist\@endpefalse
}
\renewcommand*\@maketitle{%
  \normalfont\normalsize
  \@adminfootnotes
  \@mkboth{\@nx\shortauthors}{\@nx\shorttitle}%
  \global\topskip42\p@\relax 
  \@settitle
  \ifx\@empty\authors \else {\vskip 1em
\vtop{\centering\shortauthors\@@par}} \fi
  \ifx\@empty\@date \else {\vskip 1em \vtop{\centering\@date\@@par}}\fi 
  \ifx\@empty\@dedicatory
  \else
    \baselineskip18\p@
    \vtop{\centering{\footnotesize\itshape\@dedicatory\@@par}%
      \global\dimen@i\prevdepth}\prevdepth\dimen@i
  \fi
  \@setabstract
  \normalsize
  \if@titlepage
    \newpage
  \else
    \dimen@34\p@ \advance\dimen@-\baselineskip
    \vskip\dimen@\relax
  \fi
} 
\renewcommand*\@adminfootnotes{%
  \let\@makefnmark\relax  \let\@thefnmark\relax
  \ifx\@empty\@subjclass\else \@footnotetext{\@setsubjclass}\fi
  \ifx\@empty\@keywords\else \@footnotetext{\@setkeywords}\fi
  \ifx\@empty\thankses\else \@footnotetext{%
    \def\par{\let\par\@par}\@setthanks}%
  \fi
\thispagestyle{titlepage}
}
\let\c@table\c@figure \makeatother
\begin{document}

\title[]{\large The linear system for Sudoku and a fractional completion threshold}

\author{Peter J.~Dukes and Kate I.~Nimegeers}
\address{
Mathematics and Statistics,
University of Victoria, Victoria, BC, Canada
}
\email{dukes@uvic.ca; nimegeek@uvic.ca}

\date{\today}

\begin{abstract}
We study a system of linear equations associated with Sudoku latin squares.  The coefficient matrix $M$ of the normal system has various symmetries arising from Sudoku.  From this, we find the eigenvalues and eigenvectors of $M$, and compute a generalized inverse.  Then, using linear perturbation methods, we obtain a fractional completion guarantee for sufficiently large and sparse rectangular-box Sudoku puzzles.
\end{abstract}


\maketitle
\hrule

\bigskip

\section{Introduction}
\label{sec:intro}

A \emph{latin square} of order $n$ is an $n \times n$ array with entries
from a set of $n$ symbols (often taken to be $[n]:=\{1,2,\dots,n\}$) having
the property that each symbol appears exactly once in every row and every
column.  A \emph{partial latin square} of order $n$ is an $n
\times n$ array whose cells are either empty or filled with one of $n$ symbols in such a way that each symbol appears at most once in every row and every column.
A partial latin square can be identified with a set of ordered triples in a
natural way: if symbol $k$ appears in row $i$ and column $j$, we include the
ordered triple $(i,j,k)$.  A \emph{completion} of a partial latin square $P$
is a latin square $L$ which contains $P$ in the sense of ordered triples;
that is, every symbol occurring in $P$ also occurs in the corresponding cell of $L$.

It is natural to ask how dense a partial latin square can be while still having a completion. Daykin and H\"aggkvist conjectured \cite{DH} that a partial latin square in which any row, column and symbol is used at most $n/4$ times should have a completion.  They proved a weaker version of this claim, with $n/4$ replaced by $2^{-9} n^{1/2}$.  Chetwynd and H\"aggkvist \cite{CH} and later Gustavsson \cite{Gus} obtained the first such completion guarantee which was linear in $n$.  Let us say that a partial latin square is $\epsilon$-dense if no row, column, or symbol is used more than $\epsilon n$ times.  Bartlett \cite{Bartlett} built on the preceding work to show that all $\epsilon$-dense partial latin squares have a completion for $\epsilon = 9.8 \times 10^{-5}$.  
Then, over two papers, this was improved to roughly $\epsilon=0.04$, provided $n$ is large.
One paper \cite{BD} of Bowditch and Dukes obtained this threshold for a fractional relaxation of the problem, and the other paper \cite{BKLOT} by Barber, K\"uhn, Lo, Osthus and Taylor showed using absorbers and balancing graphs that the fractional threshold suffices for very large instances of the (exact) completion problem.

Let $h$ and $w$ be integers with $h,w \ge 2$, and put $n=hw$. A \emph{Sudoku latin square} (or briefly \emph{Sudoku}) of type $(h,w)$ is an $n \times n$ latin square whose cells are partitioned into a $w \times h$ pattern of $h \times w$ subarrays where every symbol appears exactly once in each subarray.  The subarrays are called \emph{boxes}, or sometimes also called \emph{cages}.  A partial Sudoku and completion of such is defined analogously as above for latin squares.  The completion problem for partial Sudoku in the case $h=w=3$  is a famous recreational puzzle.  A mathematical discussion of Sudoku solving strategies can be found in \cite{Sudoku2,Sudoku1}.  By contrast, we are interested here in the fractional relaxation of partial Sudoku completion, essentially following the approach used in \cite{BD} for latin squares.  

Let us explain the fractional relaxation in more detail.  Working from a partial latin square $P$, an empty cell can be assigned a convex combination of symbols instead of a single symbol.  More formally, $P$ can be represented as a function $f_P:[n]^3 \rightarrow \{0,1\}$ in which $f_P(i,j,k)$ is the number of times symbol $k$ appears in cell $(i,j)$.  
A \emph{fractional completion} of $P$ is a function $f:[n]^3 \rightarrow [0,1]$ such that, for any $i,j,k \in [n]$,
\begin{itemize}
\item
$f_P(i,j,k)=1$ implies $f(i,j,k)=1$; and
\item
$\sum_{i=1}^n f(i,j,k) = \sum_{j=1}^n f(i,j,k) = \sum_{k=1}^n f(i,j,k) = 1$.
\end{itemize}
Viewing this as an array, cell $(i,j)$ is assigned a fractional occurrence of symbol $k$ with value $f(i,j,k)$.  The first condition ensures that filled cells of $P$ are left unchanged.  The second condition ensures that every symbol appears with a total occurrence of one in each column and each row, and that every cell is used with a total value of one.
For the Sudoku setting, we can add an extra family of constraints, namely that for all boxes $b$ and symbols $k$, $\sum_{(i,j)\in b} f(i,j,k)=1$, where the sum is over all ordered pairs $(i,j)$ belonging to box $b$.  We remark that when $f$ is $\{0,1\}$-valued, it corresponds to an exact completion of $P$, whether for general latin squares or Sudoku.

Figure~\ref{fig:frac-sud-example} depicts a fractional Sudoku of type $(2,3)$, where the solid disks correspond to pre-filled cells of a partial Sudoku, and the multi-colored disks correspond to a fractional completion.

\begin{figure}[htbp]
\begin{center}
\includegraphics[width=8cm]{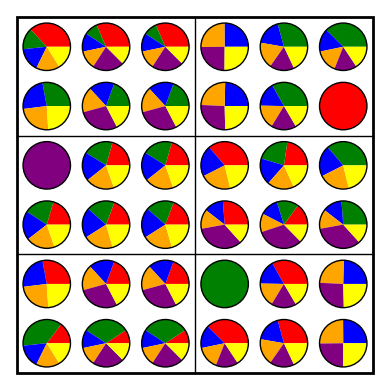}
\caption{Illustration of a fractional completion of a partial Sudoku of type $(2,3)$}
\label{fig:frac-sud-example}
\end{center}
\end{figure}

The notion of $\epsilon$-dense needs to be strengthened in the Sudoku setting.  First, it is natural to impose a constraint on number of filled cells in any box. Otherwise, completion can be blocked by placing symbols $1,\dots,n-1$ in the top-left box and symbol $n$ in line with the remaining empty cell of that box.  This uses each symbol only once and each row and column at most $\max\{h,w\}$ times.  For $h \approx w$, this is sub-linear in $n$. Separately from this, it is also natural to prevent symbol occurrences from being too unbalanced relative to the box partition. 
In more detail, for a Sudoku of type $(h,w)$, it is possible to force a given symbol in, say, the (1,1)-entry by placing it outside of the top-left box in rows $2,\dots,h$ and in columns $2,\dots,w$.  We can arrange for this to occur for different symbols, say $1$ and $2$, making completion impossible.  But this uses no row or column more than twice and uses each symbol only $h+w-2$ times, which could again be sub-linear in $n$. An illustration of the obstructions for $h=w=3$ are given in Figure~\ref{fig:no-completion}.  To address this, we can strengthen the $\epsilon$-dense condition for a partial Sudoku to:
\begin{itemize}
\item
each row, column, and box has at most $\epsilon n$ filled cells;
\item
each symbol occurs at most $\epsilon h$ times in any bundle of $h$ rows corresponding to the box partition, and likewise at most $\epsilon w$ times in any bundle of $w$ columns.  
\end{itemize}

\begin{figure}[htbp]
\begin{tikzpicture}[scale=0.6]
\foreach \a in {0,1,...,9}
	\draw (\a,0)--(\a,9);
\foreach \a in {0,3,6,9}
	\draw[line width=2pt] (\a,0)--(\a,9);
\foreach \a in {0,1,...,9}
	\draw (0,\a)--(9,\a);
\foreach \a in {0,3,6,9}
	\draw[line width=2pt] (0,\a)--(9,\a);
\node at (0.5,8.5) {\small 1};
\node at (1.5,8.5) {\small 2};
\node at (2.5,8.5) {\small 3};
\node at (0.5,7.5) {\small 4};
\node at (1.5,7.5) {\small 5};
\node at (2.5,7.5) {\small 6};
\node at (0.5,6.5) {\small 7};
\node at (1.5,6.5) {\small 8};
\node at (3.5,6.5) {\small 9};
\end{tikzpicture}
\hspace{1cm}
\begin{tikzpicture}[scale=0.6]
\foreach \a in {0,1,...,9}
	\draw (\a,0)--(\a,9);
\foreach \a in {0,3,6,9}
	\draw[line width=2pt] (\a,0)--(\a,9);
\foreach \a in {0,1,...,9}
	\draw (0,\a)--(9,\a);
\foreach \a in {0,3,6,9}
	\draw[line width=2pt] (0,\a)--(9,\a);
\node at (1.5,3.5) {\small 2};
\node at (2.5,0.5) {\small 2};
\node at (1.5,4.5) {\small 1};
\node at (2.5,1.5) {\small 1};
\node at (5.5,7.5) {\small 2};
\node at (8.5,6.5) {\small 2};
\node at (4.5,7.5) {\small 1};
\node at (7.5,6.5) {\small 1};
\end{tikzpicture}
\caption{Sparse partial latin squares with no Sudoku completion}
\label{fig:no-completion}
\end{figure}

Our main result gives a guarantee on fractional completion of $\epsilon$-dense Sudoku latin squares.

\begin{thm}
\label{thm:main}
Let $\epsilon<1/101$.  For sufficiently large integers $h$ and $w$, every $\epsilon$-dense partial Sudoku of type $(h,w)$ has a fractional completion.
\end{thm}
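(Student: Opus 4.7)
The plan is to follow the linear-algebraic perturbation strategy from \cite{BD} for latin squares, now leveraging the spectral analysis of the Sudoku coefficient matrix $M$ developed earlier in the paper. Encode a fractional Sudoku as a vector $f \in [0,1]^{[n]^3}$ subject to $Af = \mathbf{1}$, where the rows of $A$ index the four Sudoku constraint families (cell, row--symbol, column--symbol, box--symbol). A fractional completion of a partial Sudoku $P$ is a solution matching $P$ on its support.

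Given a $(1/101)$-dense partial Sudoku $P$, first choose a starting assignment $f_0$ which sets $f_0(t)=1$ on each pre-filled triple $t\in P$ and is essentially uniform $1/n$ on the remaining triples. The residual $b := \mathbf{1} - A f_0$ is a deficit vector whose entries are of size $O(\epsilon)$ and whose support is limited to constraints meeting $P$. Writing $f = f_0 + g$, the goal becomes finding $g$ supported off $P$ with $Ag = b$ and $f_0 + g \in [0,1]$; since the baseline leaves only $O(1/n)$ of slack before entries exit $[0,1]$, this amounts to requiring $\|g\|_\infty = O(1/n)$. The natural candidate is the minimum-norm solution supplied by the generalized inverse $M^+$ of the normal coefficient matrix, which the paper has already explicitly diagonalized. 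Consistency of the reduced system follows from the existence of the trivial all-$1/n$ fractional completion of the empty partial Sudoku, placing $\mathbf{1}$ (and hence $b$) in the relevant range.

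The remaining and principal step is the entrywise estimate $\|g\|_\infty \le C\epsilon/n$ with a constant $C$ stable as $h,w\to\infty$; the choice $\epsilon \le 1/101$ then forces $f_0 + g \in [0,1]$. The spectral data for $M^+$ provides $\ell_2$ control on $g$, and the $\epsilon$-density hypothesis bounds both $\|b\|_\infty$ and the support size of $b$ (including the symbol-per-row-bundle and symbol-per-column-bundle bounds needed to control the box-symbol constraints separately). To extract an $\ell_\infty$ bound at a single triple I would expand $b$ against the eigenbasis of $M$ and exploit the Sudoku-symmetric tensor/block structure of these eigenvectors, quantifying each eigenmode's pointwise contribution. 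This last estimate is the hard part: the spectral pseudoinverse only yields an $\ell_2$ contraction, and reaching the sharp threshold $1/101$ requires combining the explicit Sudoku eigenstructure with the combinatorial sparsity from the density hypothesis, then tracking constants uniformly in $h$ and $w$ as both grow large.
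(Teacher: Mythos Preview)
Your outline has a genuine gap at exactly the point you flag as ``the hard part.'' Controlling $g$ in $\ell_2$ via the spectrum of $M^+$ does not by itself give a useful $\ell_\infty$ bound: the naive $\ell_2 \to \ell_\infty$ comparison costs a factor of order $n$, which destroys the $O(1/n)$ slack you need. Your plan to recover pointwise control by expanding $b$ against the eigenbasis and exploiting tensor structure is not a method but a hope; nothing in the proposal explains how to avoid polynomial losses, and it is unclear how the specific constant $1/101$ would ever emerge from such an argument. A second, smaller issue is that your baseline $f_0$ and correction $g$ are described as living on all triples off $P$, but unavailable triples (empty cell $(i,j)$ with $k$ already in row $i$, column $j$, or the box) must carry zero weight; this constraint is not encoded in your linear system $Ag=b$ and would have to be enforced separately.

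The paper sidesteps the $\ell_2 \to \ell_\infty$ problem entirely. It works with the \emph{normal} system $M_S \mathbf{y}=\mathds{1}$ (variables indexed by edges of $G_S$, not triples), regularizes the singular $M$ by adding $\eta K$ where $K$ is the kernel projector, and then bounds $\|(M+\eta K)^{-1}\|_\infty$ \emph{directly}. This is possible because $M$, $K$, and hence $(M+\eta K)^{-1}$ all lie in the $69$-dimensional adjacency algebra $\mathfrak{A}$ of the Sudoku coherent configuration; the inverse can be written out explicitly with coefficients in $\Z[h,w]$ and its $\infty$-norm computed symbolically, yielding $\|A^{-1}\|_\infty < 15/(4n)$ at the optimal $\eta=2n/3$. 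The passage from the empty Sudoku to $S$ is then handled as a \emph{matrix} perturbation $\Delta A$ with $\|\Delta M\|_\infty \le 12\epsilon n$ and a kernel piece $\|K'\|_\infty \le (11/2)\epsilon + o(1)$, so that $\|A^{-1}\Delta A\|_\infty < (101/2)\epsilon + o(1)$; the threshold $\epsilon<1/101$ is exactly what makes this $<1/2$ and triggers the Neumann-series perturbation lemma. The shift lemma (Lemma~\ref{lem:shift}) then strips off the $\eta K$ term. None of this passes through $\ell_2$.
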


It turns out that our methods to prove Theorem~\ref{thm:main} really only require a weaker notion of density.  Roughly speaking, we need each empty cell $(i,j)$ to have 
a large proportion of all symbols $k$ available to be placed there, and analogous availability when the roles of rows, columns and symbols are permuted, and boxes are introduced.  This is made more precise later.

The outline of the paper is as follows.  In the next section, we reformulate Sudoku completion as a certain graph decomposition problem, and then give a linear system of equations governing the fractional relaxation. Starting with the empty Sudoku, the rank and a basis for the nullspace are computed and interpreted combinatorially.  The $\epsilon$-dense case can be viewed as a perturbed version of the empty case, closely following the approach in \cite{BD} for latin squares.  
This motivates a study of the linear algebra for empty Sudoku in more detail.  
In Section~\ref{sec:aa}, we observe that all relevant computations take place in an adjacency algebra of fixed dimension, independent of $h$ and $w$. 
Eigenvalues and eigenvectors relevant for the problem are described in Section~\ref{sec:eigenvalues}.  Then, using computer-assisted symbolic algebra, a certain generalized inverse is computed and upper-bounded in $\infty$-norm.  This bound ensures solutions to our linear system remain nonnegative.  Section~\ref{sec:perturb} discusses in more detail the perturbation as we pass to the $\epsilon$-dense setting.  By the end of this section, all the ingredients are in place to prove Theorem~\ref{thm:main}.  In Section~\ref{sec:thin}, we consider partial Sudoku completion in the case when the $h \times w$ boxes are asymptotically thin; that is, we consider (say) fixed width $w$ and growing height $h$.  The last section contains some concluding remarks and a brief discussion of possible extensions and generalizations.

\section{Preliminaries and set-up}
\label{sec:setup}

\subsection{A graph decomposition model}
\label{sec:decomp}

In a Sudoku of type $(h,w)$, let $\mathrm{box}(i,j)$ denote the box containing cell $(i,j)$.  If boxes are numbered left to right then top to bottom, then we have the formula
$\mathrm{box}(i,j)=h\lfloor \frac{i-1}{h} \rfloor + \lfloor \frac{j-1}{w} \rfloor + 1$.

We define the graph $G_{hw}$ as follows.  Its vertex set is
$$V(G_{hw}) = \{r_1,\dots,r_n\} \cup \{c_1,\dots,c_n\} \cup\{b_1,\dots,b_n\} \cup\{s_1,\dots,s_n\},$$
with the four sets corresponding to rows, columns, boxes, and symbols, respectively.  Its edge set is
\begin{equation}
\label{eq:EGhw}
E(G_{hw}) = \bigcup_{i,j=1}^n \{\{r_i,c_j\},\{r_i,s_j\}, \{c_i,s_j\}, \{b_i,s_j\}\}.
\end{equation}
In other words, exactly one edge is present for every combination of row-column, row-symbol, column-symbol, and box-symbol.  As a point of notation, $G_{hw}$ depends only on $n=hw$; indeed, if we omit indices in \eqref{eq:EGhw} it is seen to be  simply a blow-up of the graph $K_3+e$ by independent sets of size $n$.  With this in mind, the subscript in $G_{hw}$ can reasonably be interpreted as the product of $h$ and $w$, though it will be useful below to keep these parameters separate. 
Note that the subgraph of $G_{hw}$ induced by rows, columns, and symbols is just the complete $3$-partite graph $K_{n,n,n}$.

A \emph{tile} in $G_{hw}$ is a copy of $K_3+e$ induced by four vertices $r_i,c_j,b_\ell,s_k$ for which $\mathrm{box}(i,j)=\ell$. This tile represents the act of placing symbol $k$ in cell $(i,j)$, and also keeps track of the box used.  Let $T(G_{hw})$ denote the set of all $n^3$ tiles in $G_{hw}$.  

Given a partial Sudoku $S$ of type $(h,w)$, let $G_S$ denote the subgraph of $G_{hw}$ obtained by removing the edge sets of tiles corresponding to filled cells of $S$.  In other words, $V(G_S)=V(G_{hw})$ and $E(G_S)$ contains:
\vspace{-10pt}
\begin{itemize}
\item
$\{r_i,c_j\}$ if and only if cell $(i,j)$ is empty;
\item
$\{r_i,s_k\}$ if and only if symbol $k$ is missing in row $i$;
\item
$\{c_j,s_k\}$ if and only if symbol $k$ is missing in column $j$;
\item
$\{b_\ell,s_k\}$ if and only if symbol $k$ is missing in box $\ell$.
\end{itemize}
Let $T(G_S)$ be the set of tiles in $T(G_{hw})$ all of whose edges are in $G_S$.

An equivalent but slightly different model can be obtained by including row-box and column-box edges.  That is, we could change tiles into cliques $K_4$, and change the host graph $G_{hw}$ into a multigraph $G^*_{hw}$ with the same vertex set and all the edges of $G_{hw}$, and additionally including the edges
\vspace{-10pt}
\begin{itemize}
\item
$\{r_i,b_\ell\}$ with multiplicity $w$ if and only if row $i$ is incident with box $\ell$; and
\item
$\{c_j,b_\ell\}$ with multiplicity $h$ if and only if column $j$ is incident with box $\ell$.
\end{itemize}
Likewise, given a partial Sudoku $S$, we could define $G_S^*$ to be the subgraph of $G_{hw}^*$ obtained by removing the edges of all $4$-cliques on $\{r_i,c_j,b_\ell,s_k\}$ whenever symbol $k$ occurs in cell $(i,j)$ (and box $\ell)$ of $S$.

For graphs $F$ and $G$, we say that $G$ has an $F$-\emph{decomposition} if its edge set $E(G)$ can be partitioned into subgraphs isomorphic to $F$.  This extends naturally to multigraphs $G$, where now repeated edges are distinguished.  That is, the number of copies of $F$ containing two vertices $u \neq v$ equals the multiplicity of edge $\{u,v\}$ in $G$.  Many problems in combinatorics can be formulated in terms of graph decompositions.  For example, $K_3$-decompositions of $K_{n,n,n}$ are equivalent to latin squares of order $n$; see for instance \cite{BKLOT,BD,Montgomery}.  The following is an analog for Sudoku using our graphs above.

\begin{prop}
\label{prop:sud-decomp}
The partial Sudoku $S$ has a completion if and only if the graph $G_S$ has an edge-decomposition into tiles, or equivalently if and only if $G^*_S$ has a $K_4$-decomposition.
\end{prop}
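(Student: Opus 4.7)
The idea is to set up an explicit bijection between completions of $S$ and tile-decompositions of $G_S$: each placed triple $(i,j,k)$ (whether pre-existing in $S$ or filled in by a completion) corresponds to the unique tile on $\{r_i,c_j,b_{\mathrm{box}(i,j)},s_k\}$. I would start by observing that in any latin-square completion $L$ of $S$, each of the $n^2$ triples of $L$ gives such a tile, and the $|E(G_S)|$ tiles that correspond to the empty cells of $S$ are the candidates to decompose $G_S$. The structural content of the proposition is that the Sudoku line/box constraints on $L$ translate exactly into the cover-each-edge-once condition for an edge decomposition.

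Concretely, I would carry out the forward direction first. Fix a completion $L$ of $S$ and let $\mathcal{T}$ be the collection of tiles coming from the empty cells of $S$. I need to check that each edge of $G_S$ lies in exactly one tile of $\mathcal{T}$. For an edge $\{r_i,c_j\}$ this is immediate: by definition it appears in $G_S$ exactly when $(i,j)$ is empty, and then the unique tile containing it is the one with symbol vertex $s_{L(i,j)}$ and box vertex $b_{\mathrm{box}(i,j)}$. For an edge $\{r_i,s_k\}$, the edge is present in $G_S$ iff symbol $k$ is missing from row $i$ of $S$; since $L$ places $k$ exactly once in row $i$, that placement occurs in a unique empty cell $(i,j)$, producing exactly one tile through $\{r_i,s_k\}$. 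The arguments for $\{c_j,s_k\}$ and $\{b_\ell,s_k\}$ are identical after permuting the roles of rows, columns, and boxes (using the Sudoku axiom that each symbol appears once in each box for the last one). This verifies that $\mathcal{T}$ is an edge-decomposition of $G_S$.

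For the converse, suppose $G_S$ admits a tile decomposition $\mathcal{T}$. Each tile has the form $\{r_i,c_j,b_\ell,s_k\}$ with $\mathrm{box}(i,j)=\ell$, so it naturally encodes the instruction ``place symbol $k$ in cell $(i,j)$''. Define $L$ to agree with $S$ on filled cells and to use these tile-instructions on the empty cells; I then need $L$ to be a valid Sudoku. Well-definedness at each empty cell $(i,j)$ follows because the edge $\{r_i,c_j\}\in E(G_S)$ is covered by exactly one tile of $\mathcal{T}$. That every symbol appears in every row amounts to the fact that the $\{r_i,s_k\}$-edges of $G_S$ (the missing symbols in row $i$) are each covered exactly once; the same reasoning handles columns and boxes.

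Finally, the equivalence with $K_4$-decompositions of $G^*_S$ is essentially formal: adding the row-box edge $\{r_i,b_\ell\}$ with multiplicity $w$ and the column-box edge $\{c_j,b_\ell\}$ with multiplicity $h$ exactly accounts for the fact that each row meets each incident box in $w$ cells and each column meets each incident box in $h$ cells. In a tile decomposition of $G_S$, the tiles through a given $(r_i,b_\ell)$ pair are precisely the $w$ cells of row $i$ lying in box $\ell$; promoting tiles to $K_4$'s then covers each row-box edge with the correct multiplicity $w$ (and similarly columns and boxes), so a tile decomposition of $G_S$ is the same data as a $K_4$-decomposition of $G^*_S$. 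I do not expect any real obstacle here; the main thing to be careful about is the bookkeeping that shows each of the four edge types of $G_S$ is covered by the correct count of tiles, which is where the Sudoku row/column/box axioms are each used exactly once.
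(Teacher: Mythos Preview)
Your plan is correct and follows essentially the same bijection argument as the paper: completions correspond to tile-decompositions via $(i,j,k)\mapsto\{r_i,c_j,b_{\mathrm{box}(i,j)},s_k\}$, with each of the four edge types checked using the corresponding Sudoku constraint, and the $G_S^*$ statement handled by promoting tiles to $K_4$'s. One small wording fix: in the $G_S^*$ step, the tiles through a pair $(r_i,b_\ell)$ with $r_i\smile b_\ell$ correspond to the \emph{empty} cells of row $i$ in box $\ell$, and likewise the multiplicity of $\{r_i,b_\ell\}$ in $G_S^*$ is $w$ minus the number of pre-filled such cells, so both sides match for the same reason rather than both equalling $w$.
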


\begin{proof}
Suppose $S$ has a completion $S'$.  If cell $(i,j)$ was blank in $S$ but filled in $S'$, say with symbol $k$, we use the tile defined by $\{r_i,c_j,s_k,b_\ell\}$, where $\ell=\mathrm{box}(i,j)$.  Each such tile belongs to $T(G_S)$ because $(i,j)$ was blank in $S$ and because $k$ occurs only once in $S'$ in row $i$, column $j$ and box $\ell$.  Consider the set $\mathcal{T}$ of these tiles induced by cells that were blank in $S$ and filled in $S'$.  These tiles are edge-disjoint, again because $S'$ has no repeated symbols in any any row, column, or box.  We check that $\mathcal{T}$ gives an edge-decomposition of $G_S$ into tiles. Any row-column edge of $G_S$, say $\{r_i,c_j\}$, is in the tile corresponding to the symbol placed at entry $(i,j)$ of $S'$.  Consider a row-symbol edge, say $\{r_i,s_k\} \in E(G_S)$.  The presence of this edge means $k$ was missing from row $i$ in $S$.  It occurs somewhere in row $i$ of $S'$, say at entry $(i,j)$.  This entry was blank in $S$, so $r_i,c_j,s_k$ define a tile in $\mathcal{T}$, along with the box containing $(i,j)$.  A similar verification holds for edges of type column-symbol and box-symbol in $G_S$.

For the converse, the argument is reversible.  Given a set $\mathcal{T}$ of tiles that form an edge-decomposition of $G_S$, we complete $S$ by placing symbol $k$ in entry $(i,j)$ whenever $r_i,c_j,s_k$ belong to a tile of $\mathcal{T}$.  Since the tiles of $\mathcal{T}$ are edge-disjoint, every entry $(i,j)$ is filled at most once and no row, column, or box contains repeats.  Since the edges within $\mathcal{T}$ partition those in $G_S$, it follows that every blank entry of $S$ gets filled, and every symbol occurs in every row, column, and box.

The claim about $G_S^*$ having a $K_4$-decomposition is nearly identical.  In the forward implication, we note that a row-box edge $\{r_i,b_\ell\}$ in $G_{hw}^*$ occurs in total $w$ times counting $E(G_S^*)$ and the decomposition.  This is because the completion $S'$ has $w$ entries in row $i$ and box $\ell$.  Similarly, column-box edges of $G_{hw}^*$ occur a total of $h$ times.
\end{proof}

The model using row-box and column-box edges has the advantage that all $4$-cliques in $G_S^*$ correspond to valid tiles.  However, since no new information is carried by those extra edges, we henceforth work with tiles in $G_S$, omitting the implied edges of type row-box and column-box.

This paper is concerned with partial Sudoku latin squares which are nearly empty. Recall the definition of $\epsilon$-dense discussed in Section~\ref{sec:intro} and strengthened for the case of Sudoku.  The definition leads easily to various degree bounds in $G_S$, which we summarize here.

\begin{lemma}
\label{lem:eps-GS}
Suppose $S$ is $\epsilon$-dense.  
Then in the graph $G_S$, the number of edges from vertex
\vspace{-11pt}
\begin{itemize}
\item
$c_j$ to the row partite set is at least $(1-\epsilon)n$;
\item
$s_k$ to any row bundle is at least $(1-\epsilon)h$;
\item
$r_i$ to the column partite set is at least $(1-\epsilon)n$;
\item
$s_k$ to any column bundle is at least $(1-\epsilon)w$;
\item
$r_i,c_j$ or $b_\ell$ to the symbol partite set is at least $(1-\epsilon)n$;
\item
$s_k$ to the box partite set is at least $(1-\epsilon)n$.
\end{itemize}
\end{lemma}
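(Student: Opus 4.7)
The plan is a direct counting argument: each bullet converts one of the $\epsilon$-density hypotheses into a lower bound on the appropriate degree via the defining description of $E(G_S)$.

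For the first and third bullets, edges from $c_j$ (resp.\ $r_i$) to the opposite partite set among $\{r_1,\dots,r_n\}$ and $\{c_1,\dots,c_n\}$ are in bijection with empty cells of column $j$ (resp.\ row $i$).  Since $S$ is $\epsilon$-dense, each row and column has at most $\epsilon n$ filled cells, leaving at least $(1-\epsilon)n$ empty, which immediately yields the required degree bound.

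For the second and fourth bullets, an edge $\{r_i,s_k\}$ encodes that $k$ is missing from row $i$.  Fix a row bundle $B$, i.e.\ the set of $h$ rows supporting one horizontal band of boxes.  The number of edges from $s_k$ into $B$ equals the number of rows of $B$ from which $k$ is absent.  Because $S$ is a partial latin square, $k$ appears at most once per row, so the number of rows of $B$ containing $k$ equals the number of occurrences of $k$ within $B$, which is at most $\epsilon h$ by the strengthened $\epsilon$-density hypothesis.  Hence at least $(1-\epsilon)h$ rows of $B$ are missing $k$.  The column-bundle case is identical with $h$ replaced by $w$.

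For the fifth bullet, edges from $r_i$ (resp.\ $c_j$, $b_\ell$) to the symbol partite set count symbols missing from that row (resp.\ column, box).  The $\epsilon$-density hypothesis bounds the number of filled entries in each row, column, and box by $\epsilon n$, so at most $\epsilon n$ symbols are present and at least $(1-\epsilon)n$ are missing, as required.  For the sixth bullet, we need $s_k$ to have at least $(1-\epsilon)n$ neighbors among $\{b_1,\dots,b_n\}$.  Summing the row-bundle condition over all $w$ row bundles, the total number of occurrences of symbol $k$ in $S$ is at most $w\cdot\epsilon h=\epsilon n$.  Since each box contains $k$ at most once, at most $\epsilon n$ boxes contain $k$, so $k$ is missing from at least $(1-\epsilon)n$ boxes, contributing that many edges $\{b_\ell,s_k\}$ to $G_S$.

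There is no real obstacle here; the argument is pure bookkeeping.  The only step that is not a one-line invocation of a hypothesis is the last one, where one must aggregate the row-bundle density condition across all $w$ bundles (or equivalently aggregate the column-bundle condition across $h$ bundles) to control the global occurrence count of each symbol and thereby bound the box-degree of $s_k$.
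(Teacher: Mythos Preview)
Your proof is correct and matches the paper's intent. The paper does not actually supply a proof for this lemma; it is stated immediately after the remark that ``the definition leads easily to various degree bounds in $G_S$, which we summarize here,'' so the authors regard it as a direct unpacking of the $\epsilon$-dense hypotheses. Your bullet-by-bullet bookkeeping is precisely that unpacking, including the small extra step for the last item where the global symbol count is obtained by summing the row-bundle bound over the $w$ bundles.
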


Next, we give an alternate sparseness definition suited to our approach.
Let us say that a partial Sudoku $S$ has the $(1-\delta)$-\emph{availability property} if every edge $e \in E(G_S)$ is contained in at least $(1-\delta)n$ tiles in $T(G_S)$. 
We note that an $\epsilon$-dense partial Sudoku has the $(1-3\epsilon)$-availability property.  Indeed, for an edge $\{r_i,c_j\}$, at most $\epsilon n$ symbols are already used in row $i$, in column $j$, and in the box containing $(i,j)$.  These could all be different sets of symbols, but even so this leaves at least $(1-3\epsilon)n$ available tiles.
For an edge of the form $\{r_i,s_k\}$, at most $\epsilon n$ columns are filled in row $i$, at most $\epsilon n$ columns already have symbol $k$, and at most $(\epsilon h)w=\epsilon n$ columns are unavailable due to the boxes along row $i$ already having symbol $k$.  Edges $\{c_j,s_k\}$ behave analogously.  Finally, an edge of type $\{s_k,b_\ell\}$ has at most $\epsilon n$ unavailable options due to filled cells in box $\ell$, and at most $\epsilon(h+w)$ unavailable options due to symbol $k$ occurring elsewhere in the row bundle or column bundle for box $\ell$.
For each of the four edge types, we have at least $(1-3\epsilon)n$ available tiles in $T(G_S)$.

Conversely, $S$ having the $(1-\delta)$-availability property implies any row, column, symbol or box that is not completely used has at most $\delta n$ occurrences.  Moreover, if symbol $s_k$ occurs fewer than $h$ times in a row bundle, it must occur at most $\delta h$ times.  Otherwise, take a box $b_\ell$ in this bundle with empty cells and observe that $\{s_k,b_\ell\}$ has fewer than $n-w(\delta h)=(1-\delta)n$ tiles available in $T(G_S)$.  A similar statement holds for column bundles.  In other words, the $(1-\delta)$-availability property implies $S$ is $\delta$-dense, except possibly for completely filled rows, columns, boxes, or any symbols fully used in a bundle.  This exception is convenient if, say, one wants to finish off certain symbols in a bundle before completing the rest of the partial Sudoku.  This idea is revisited in Section~\ref{sec:thin}.

\subsection{The linear system for Sudoku}
\label{sec:linear-system}

Consider an empty $n \times n$ Sudoku, to be filled with entries from $[n]$.
Let $x_{ijk}$ denote the number/fraction of symbols $s_k$ placed in cell $(i,j)$, where $(i,j,k) \in [n]^3$.  Latin square and Sudoku constraints naturally correspond to linear equations on these variables.
The condition that every cell have exactly one entry becomes $\sum_{k} x_{ijk} = 1$ for each $(i,j) \in [n]^2$.
The condition that every row contains every symbol exactly once becomes $\sum_{j} x_{ijk} = 1$ for each $(i,k) \in [n]^2$.  Similarly, that every column contains every symbol exactly once becomes $\sum_{i} x_{ijk} = 1$ for each $(j,k) \in [n]^2$.
Together, these $3n^2$ equations yield a linear system for (fractional) latin squares.  The additional condition relevant for Sudoku is that every box contains every symbol exactly once, or 
$$\sum_{(i,j) \in \mathrm{box}(\ell)} x_{ijk} = 1$$ for each $(k,\ell) \in [n]^2$.

This results in a $4n^2 \times n^3$ linear system
\begin{equation}
\label{eq:Wsys}
W \mathbf{x}= \mathds{1},
\end{equation}
where $\mathds{1}$ denotes the all-ones vector and $W$ is the $\{0,1\}$ inclusion matrix of $E(G_{hw})$ versus $T(G_{hw})$; that is, $W(e,t)=1$ if $e \in t$ and is $0$ otherwise. In this paper, we will mainly consider the (square) normal system, with coefficient matrix $M=WW^\top$.  An entrywise nonnegative solution $\mathbf{y}$ to $M \mathbf{y}=\mathds{1}$ implies the existence of a solution $\mathbf{x}=W^\top \mathbf{y} \ge \mathbf{0}$ to \eqref{eq:Wsys}.  This, in turn, produces a fractional edge-decomposition of $G_{hw}$ into tiles and a fractional Sudoku of type $(h,w)$.

The rank (over the reals) of both $W$ and $M$ can be found by exhibiting a basis for their range consisting of tiles in $G_{hw}$.
For convenience, set punctuation will be omitted from edges and tiles; we abbreviate these by juxtaposing vertices in different partite sets of $V(G_{hw})$.

\begin{prop}
\label{prop:rank}
We have $\rank(M)=\rank(W)=n^3-(n-1)^3+(n-1)(h-1)(w-1)$.
\end{prop}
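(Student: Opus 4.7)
The plan is to first reduce to $\rank(W)$: since $M = WW^\top$, the standard identity $\ker(WW^\top) = \ker(W^\top)$ gives $\rank(M) = \rank(W)$ over $\mathbb{R}$.  To compute $\rank(W)$, I would exhibit a specific set $\mathcal{B}$ of tiles whose corresponding columns of $W$ form a basis for its column span.

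Choose one ``first row'' in each of the $w$ row-bundles, say $R^* = \{1, h+1, 2h+1, \dots, (w-1)h+1\}$, and analogously $C^* = \{1, w+1, \dots, (h-1)w+1\}$ for columns.  Let
\[
\mathcal{B}_0 = \{(i,j,k) \in [n]^3 : \min(i,j,k) = 1\}, \qquad \mathcal{B}_1 = \{(i,j,k) : i \in R^* \setminus \{1\},\, j \in C^* \setminus \{1\},\, k \ge 2\},
\]
and take $\mathcal{B} = \mathcal{B}_0 \sqcup \mathcal{B}_1$.  A direct count yields $|\mathcal{B}| = (n^3 - (n-1)^3) + (w-1)(h-1)(n-1)$, matching the claimed formula.

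Linear independence of the columns indexed by $\mathcal{B}$ is the clean step.  Given $\sum_{t \in \mathcal{B}} c_t \, \mathrm{col}_t = \mathbf{0}$, examine the edge-equation at each box-symbol edge $\{b_\ell, s_k\}$ with $\ell = \ell(\alpha, \beta)$, $\alpha, \beta \ge 2$, and $k \ge 2$.  Any tile of $\mathcal{B}$ through this edge must have its row in bundle $\alpha$ (so $i \ge h+1 \ge 2$) and its column in bundle $\beta$ (so $j \ge w+1 \ge 2$), ruling out every tile of $\mathcal{B}_0$; among $\mathcal{B}_1$ the first-in-bundle constraint pins down the unique tile $t = (1 + (\alpha-1)h,\, 1 + (\beta-1)w,\, k)$.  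The edge equation thus reads $c_t = 0$, and as $(\alpha, \beta, k)$ ranges we kill all the coefficients on $\mathcal{B}_1$.  The remaining relation $\sum_{t \in \mathcal{B}_0} c_t \, \mathrm{col}_t = 0$, projected to the row-column, row-symbol, and column-symbol coordinates, becomes the standard linear independence of min-coordinate-one triangles in the edge-triangle inclusion matrix of $K_{n,n,n}$, dispatched by the familiar coefficient chase (e.g.\ $\{c_j, s_k\}$ with $j,k \ge 2$ singles out $(1,j,k)$, then $\{r_1, s_k\}$ gives $(1,1,k)$, and so on).

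For spanning, the idea is to take the classical three-variable inclusion-exclusion identity
\[
\mathrm{col}_{(i,j,k)} = \mathrm{col}_{(i,j,1)} + \mathrm{col}_{(i,1,k)} + \mathrm{col}_{(1,j,k)} - \mathrm{col}_{(i,1,1)} - \mathrm{col}_{(1,j,1)} - \mathrm{col}_{(1,1,k)} + \mathrm{col}_{(1,1,1)},
\]
which is exact on the three ``latin'' edge-types, and then absorb the leftover residue on the box-symbol edges using $\mathcal{B}_1$-tiles.  The main obstacle is this final box-residue bookkeeping: the residue is supported on a short list of box-symbol edges incident to row-bundle $1$ or column-bundle $1$, and reducing it to a combination of $\mathcal{B}_1$-columns requires a careful replacement of each row $i$ by its first-of-bundle representative (and similarly for $j$).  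A cleaner alternative is to bypass spanning by computing $\dim \ker W$ directly via a layer-by-layer analysis in the symbol coordinate: each fixed-$k$ slice of $x$ has vanishing row, column, and box sums, yielding layer dimension $(n-1)^2 - (h-1)(w-1)$ (since the $n$ box-sum functionals, modulo row and column sums, have codimension $h+w-1$), and imposing the $n^2$ cell-sum constraints across layers removes an additional $(n-1)^2 - (h-1)(w-1)$ degrees of freedom, producing $\dim \ker W = (n-1)\bigl[(n-1)^2 - (h-1)(w-1)\bigr] = (n-1)^3 - (n-1)(h-1)(w-1)$; together with the $|\mathcal{B}|$ independent columns above, this forces $\mathcal{B}$ to be a basis and yields the claimed rank formula.
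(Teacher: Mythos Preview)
Your basis $\mathcal{B}=\mathcal{B}_0\sqcup\mathcal{B}_1$ and your linear-independence argument coincide with the paper's: the paper takes $\mathcal{T}_1$ (tiles meeting $r_1$, $c_1$, or $s_1$) together with one tile $r_{i'}c_{j'}s_kb_\ell$ per pair $(k,\ell)$ with $k\ge 2$ and $b_\ell$ disjoint from row~$1$ and column~$1$, and shows independence via exactly the box-symbol edge trick you describe. Where you diverge is in establishing the rank. The paper does \emph{spanning} directly, by a four-case explicit linear combination (according to whether $i\le h$ or $i>h$, and $j\le w$ or $j>w$); this is precisely the ``box-residue bookkeeping'' you flag as the main obstacle and then set aside. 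Your alternative --- computing $\dim\ker W$ by slicing on the symbol coordinate --- is correct and arguably cleaner: each fixed-$k$ slice lies in the space $V$ of $n\times n$ matrices with vanishing row, column, and box sums, the cell-sum constraints amount to the surjective summation map $V^n\to V$, and so $\dim\ker W=(n-1)\dim V$. The one point you assert without proof is that $\dim V=(n-1)^2-(h-1)(w-1)$, i.e.\ that the $hw$ box-sum functionals have rank exactly $(h-1)(w-1)$ on the zero-row/column-sum space. The upper bound follows from the $h+w-1$ bundle relations you mention; for the matching lower bound you should exhibit, for each of $(h-1)(w-1)$ boxes, a zero-row/column-sum matrix with nonzero sum in that box but zero in the others (or equivalently check that a single box-sum is not a combination of row and column sums, then use the tensor structure). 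This is routine, but it is the step doing the work in your alternative route. What the paper's explicit spanning argument buys is a concrete expression for any tile in terms of the basis, which is not needed for the rank formula itself but is structurally informative; what your kernel count buys is avoidance of the case analysis.
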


\begin{proof}
Let $\mathcal{T}_1$ be the set of $n^3-(n-1)^3$ tiles in $G_{hw}$ which intersect at least one of $r_1,c_1,s_1$.  It was shown in \cite[Proposition 2.3]{BD} that $\mathcal{T}_1$ is linearly independent in the vector space of functions from $E(K_{n,n,n})$ to $\R$. Thus, $\mathcal{T}_1$ is also linearly independent in $\R^{E(G_{hw})}$.  Let $\mathcal{T}_2$ be any set of $(n-1)(h-1)(w-1)$ tiles of the form $r_i c_j s_k b_\ell$, where $k=2,\dots,n-1$ and the $b_\ell$ range over all boxes disjoint from both row $1$ and column $1$.  Since the tiles in $\mathcal{T}_2$  use distinct box-symbol edges which are not present in $\mathcal{T}_1$, it is clear that $\mathcal{T}_1 \cup \mathcal{T}_2$ is linearly independent.

We next show that $\mathcal{T}_1 \cup \mathcal{T}_2$ generates any given column of $W$, say the one corresponding to a tile $\{r_i,c_j,s_k,b_\ell\}$.
Suppose $i\le h$ and $j \le w$.  Then $\ell=\mathrm{box}(i,j)=1$.  We can form the target tile as a linear combination in $\mathcal{T}_1$, namely as
$$r_ic_js_kb_1
= r_1c_js_kb_1
+ r_ic_1s_kb_1
+ r_ic_js_1b_1
- r_1c_1s_kb_1
- r_1c_js_1b_1
- r_ic_1s_1b_1
+ r_1c_1s_1b_1.$$
Suppose next that $i>h$ and $j \le w$.  
As above, we have the linear combination
$$r_ic_js_kb_\ell
= r_1c_js_kb_1
+ r_ic_1s_kb_\ell
+ r_ic_js_1b_\ell
- r_1c_1s_kb_1
- r_1c_js_1b_\ell
- r_ic_1s_1b_1
+ r_1c_1s_1b_1.$$
Similarly, $\mathcal{T}_1$ generates any tile with $i\le h$ and $j>w$.  Suppose, then, that $i>h$ and $j>w$.  If $k=1$, the corresponding tile belongs to $\mathcal{T}_1$, so assume $k>1$.  Put $p=\mathrm{box}(1,j)$ and $q=\mathrm{box}(i,1)$, and note that all tiles meeting these boxes are in the span of $\mathcal{T}_1$, as shown above.  Since $i>h$, $j>w$, and $k>1$, we know that
$s_k$ and $b_\ell$ occur together in some tile $r_{i'} c_{j'} s_k b_\ell \in \mathcal{T}_2$.  Using this and other tiles generated so far, we compute
\begin{align*}
r_ic_js_kb_\ell
&=
r_ic_js_1b_\ell
+r_1 c_j s_k b_p
+r_i c_1 s_k b_q
-r_1 c_1 s_k b_1
-r_1 c_j s_1 b_p
-r_i c_1 s_k b_q
+r_{i'} c_{j'} s_k b_\ell\\
&-r_{i'} c_{j'} s_1 b_\ell
-r_{1} c_{j'} s_k b_p
-r_{i'} c_1 s_k b_q
+r_1 c_1 s_k b_1
+r_1 c_{j'} s_1 b_p
+r_{i'} c_1 s_1 b_q
-r_1 c_1 s_1 b_1.
\end{align*}
We have shown that $\mathcal{T}_1 \cup \mathcal{T}_2$ spans each column of $W$, and hence is a basis for range$(W)$.
\end{proof}

Suppose now that some cells of our Sudoku have been pre-filled, resulting in the graph $G_S$ as described earlier.  Let $W_S$ denote the $\{0,1\}$ inclusion matrix of edges versus tiles in $G_S$.  Then the system
\begin{equation}
\label{eq:WSsys}
W_S \mathbf{x}= \mathds{1}
\end{equation}
has a solution $\mathbf{x} \ge \mathbf{0}$ if and only if $G_S$ admits a fractional edge-decomposition into tiles. Note that for non-empty $S$, the dimensions in the system \eqref{eq:WSsys} are smaller than in \eqref{eq:Wsys}.  The tile weights are given by entries $x_{ijk}$ of $\mathbf{x}$.  By Proposition~\ref{prop:sud-decomp}, the existence of such a solution is equivalent to our partial Sudoku $S$ having a completion.

We may again consider the normal system with coefficient matrix $M_S=W_S W_S^\top$.  Even though many possible solutions of \eqref{eq:WSsys} are lost in doing so, the normal system has the advantage of allowing eigenvalue and perturbation methods, as was done in \cite{BD}.  We extend these methods to the Sudoku setting in Sections~\ref{sec:spectral} and \ref{sec:perturb} to follow.

\subsection{The kernel}
\label{sec:kernel}

For our analysis of the linear systems \eqref{eq:Wsys} and \eqref{eq:WSsys} above, it is important to study the nullspace/kernel of $M$, or equivalently the left nullspace of $W$.  This can be viewed as the set of all edge-weightings in $G_{hw}$ in which each tile has a vanishing total weight (over its four edges).

After some simplification, Proposition~\ref{prop:rank} gives
\begin{equation}
\label{eq:nullity}
\dim \, \ker(M)=4n^2-\rank(M) = 3n+(h+w)(n-1).
\end{equation}
We would like to find a basis for $\ker(M)$.
First, a spanning set is described in three categories of vectors below.

(A) Choose a row $r_i$ and define the vector $\mathbf{v}$, coordinates indexed by $E(G_{hw})$, where $\mathbf{v}(r_ic_j)=1$ for all $j \in [n]$, $\mathbf{v}(r_is_k)=-1$ for all $k \in [n]$, and otherwise $\mathbf{v}(e)=0$.   Similar classes of kernel vectors exist with the roles of row, column and symbol permuted.

Consider the characteristic vector $\mathbf{t}$ of some tile $t$.  If $r_i \in t$, then since $t$ contains exactly one column and exactly one symbol we have $\mathbf{t} \cdot \mathbf{v}=1-1=0$.  On the other hand, if $r_i \not\in t$, the support of $t$ is disjoint from the support of $\mathbf{v}$, hence we again have $\mathbf{t} \cdot \mathbf{v}=0$.
In plain language, this kernel vector encodes the condition that the number of times a column is used with row $i$ equals the number of times a symbol is used in row $i$. Verification is similar for the permuted varieties.

(B) Choose a box $b_\ell$ and define the vector $\mathbf{v}$ in which $\mathbf{v}(r_ic_j)=1$ for all $(i,j) \in b_\ell$, $\mathbf{v}(b_\ell s_k)=-1$ for all $k \in [n]$, and otherwise $\mathbf{v}(e)=0$. 

As before, let $\mathbf{t}$ be the characteristic vector of a tile $t$.  If $b_\ell \in t$, then since $t$ contains exactly one row, column and symbol, we have $\mathbf{t} \cdot \mathbf{v}=1-1=0$.  On the other hand, if $b_\ell \not\in t$, the supports of $\mathbf{t}$ and $\mathbf{v}$ are disjoint.  This kernel vector encodes the condition that the number of entries filled in box $\ell$ equals the number of symbols used in box $\ell$.

(C) Choose a bundle of rows $\{r_{hp+1},\dots,r_{h(p+1)}\}$ and a
symbol $s_k$.  Define the vector $\mathbf{v}$ with $\mathbf{v}(r_is_k)=1$ and $\mathbf{v}(b_\ell s_k)=-1$
for all $hp < i,\ell \le h(p+1)$, and otherwise $\mathbf{v}(e)=0$.  A similar class of vectors exists for column bundles.

Once again, let $\mathbf{t}$ be the characteristic vector of a tile $t$.  Suppose $s_k \in t$ and $t$ intersects the row bundle defining $\mathbf{v}$.  Since $t$ contains exactly one row and exactly one box meeting this bundle, we have $\mathbf{t} \cdot \mathbf{v}=1-1=0$.  On the other hand, if either $s_k \not\in t$ or $t$ intersects a different row bundle, the supports of $\mathbf{t}$ and $\mathbf{v}$ are disjoint.  The encoded condition states that a given symbol $s_k$ appears the same number of times in a row bundle as in the corresponding box bundle.  The column bundle case has a similar verification.

To see that the vectors above span $\ker(M)$, it is convenient to examine the subspace $U$ spanned by the row and column varieties of type (A), along with all vectors of type (B).  Note that all vectors in $U$ are invariant under symbol permutation.  It is straightforward to see that $\dim(U)=3n$, since basis vectors have disjoint supports.  Next, let $V$ be the subspace of $\ker(M)$ spanned by the type (C) vectors.  It is easy to see that $\dim(V)=(h+w)n$ and $\dim(U \cap V) \le h+w$, the number of row or column bundles.  Therefore, $$\dim(U+V) = \dim(U)+\dim(V) - \dim(U \cap V) \ge 3n+(h+w)(n-1) = \dim \ker(M).$$
For a basis of $\ker(M)$, it suffices to take the row and column varieties of type (A) vectors, all type (B) vectors, and those of type (C) which avoid a particular symbol, say $s_n$.  More details on this can be found in the second author's thesis \cite{Kate}.

Let $K$ be the $4n^2 \times 4n^2$ matrix which projects onto $\ker(M)$.  We have $K^2 = K = K^\top$.  Let $K[S]$ denote the principal submatrix of $K$ whose rows and columns correspond to the edges of $G_S$. The following orthogonality relations are similar to \cite[Proposition 2.5]{BD}.

\begin{prop}
\label{prop:Kres-MG}
The range of $K[S]$ is orthogonal to the all-ones vector and to the range of $M_S$.  That is,
{\rm (a)}
$K[S] \mathds{1} = \mathbf{0}$; and
{\rm (b)}
$K[S] M_S = O$.
\end{prop}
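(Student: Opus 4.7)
My plan is to exploit the one defining property of $K$: since $K$ is the orthogonal projection onto $\ker(M) = \ker(W^\top) = (\mathrm{range}\, W)^\perp$, it annihilates every vector of $\R^{E(G_{hw})}$ that lies in $\mathrm{range}\, W$. Writing $\pi_S \colon \R^{E(G_{hw})} \to \R^{E(G_S)}$ for coordinate restriction and $i_S$ for its adjoint inclusion, so that $K[S] = \pi_S K i_S$, both parts will reduce to the task of exhibiting suitable preimages under $W$.

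For part (b), the key point is that if $t \in T(G_S)$ then all four edges of $t$ lie in $E(G_S)$, so the $t$-column of $W$ is supported on $E(G_S)$; equivalently, $i_S W_S e_t = W e_t$. Since $W e_t \in \mathrm{range}\, W$, it is killed by $K$, and applying $\pi_S$ gives $K[S] W_S e_t = \mathbf{0}$. Letting $t$ range over $T(G_S)$ yields $K[S] W_S = O$, and right-multiplying by $W_S^\top$ proves (b).

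For part (a), I would first check that $\mathds{1}_{E(G_{hw})}$ itself lies in $\mathrm{range}\, W$ by a direct count: every edge of $G_{hw}$ is contained in exactly $n$ tiles, so $W \mathds{1}_{T(G_{hw})} = n\mathds{1}$. Next, as in the forward direction of Proposition~\ref{prop:sud-decomp}, $G_S$ arises from $G_{hw}$ by deleting the edges of the tiles $t_1,\dots,t_m$ that correspond to the filled cells of $S$, and a short case check by edge type (row--column, row--symbol, column--symbol, box--symbol) confirms that these filled tiles are pairwise edge-disjoint since $S$ is a valid partial Sudoku. Thus
\[
i_S \mathds{1}_{E(G_S)} \;=\; \mathds{1} - W(e_{t_1} + \cdots + e_{t_m}) \;\in\; \mathrm{range}\, W,
\]
so $K$ annihilates it, and applying $\pi_S$ gives $K[S]\mathds{1} = \mathbf{0}$.

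The only real difficulty here is the notational bookkeeping with $\pi_S$ and $i_S$; the two ``supports lie in $E(G_S)$'' observations are essentially one-line verifications, and both identities then fall out of the single principle that $K$ vanishes on $\mathrm{range}\, W$.
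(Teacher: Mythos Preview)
Your proof is correct and follows essentially the same approach as the paper: with your $i_S$ playing the role of the paper's inclusion map $L$ (so $K[S]=L^\top K L$), both parts reduce to showing that $i_S\mathds{1}$ and the columns of $i_S W_S$ lie in $\mathrm{range}\,W$, whence they are annihilated by $K$. The paper packages the key identity for (b) as $LW_S = WQ$ (with $Q$ the tile inclusion), which is exactly your columnwise observation $i_S W_S e_t = W e_t$, and its argument for (a) is likewise the same ``subtract the filled tiles from $\mathds{1}$'' computation you give.
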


\begin{proof}
For matrices and vectors indexed by $E(G_{hw})$, sort the indices so that those corresponding to $E(G_S)$ come first.  Let $L$ be the inclusion map from edges of $G_S$ to edges of $G_{hw}$ and let $Q$ be the inclusion map from tiles of $G_S$ to tiles of $G_{hw}$.  As matrices, $L$ and $Q$ have the structure $[I \mid O]^\top$.  

Let $\mathds{1}_S = (\mathds{1} \mid \mathbf{0})$ be the $4n^2 \times 1$ zero-one indicator vector of $E(G_S)$ in $E(G_{hw})$. 
Alternatively, $\mathds{1}_S$ is obtained from the
$4n^2 \times 1$ all-ones vector by subtracting indicator vectors of tiles in $S$.  It follows that $\mathds{1}_S$ is contained in the range of $W^\top$, and hence is orthogonal to $\mathrm{ker}(W^\top)$.  We now compute
$$K[S] \mathds{1} = L^\top KL \mathds{1} =  L^\top K \mathds{1}_S = \mathbf{0}.$$ 
This proves (a).  With our matrix partition, we have
$$W=\left[
\begin{array}{c|c}
W_S & *\\
\hline
O & *
\end{array} \right]$$
and $LW_S=WQ$.  Working from these,
$$K[S] M_S = L^\top K L W_S W_S^\top = L^\top K W Q W_S^\top = O,$$
since $KW=O$.  This proves (b).
\end{proof}

Next, we recall \cite[Lemma 2.6]{BD}. The idea lets us solve an under-determined system $A \vx = \vb$ by inverting an additive shift of $A$.  We use this later in Section~\ref{sec:perturb} with $A$ taking the role of $M_S$, $B$ a multiple of $K[S]$, and $\vb = \mathds{1}$.

\begin{lemma}[see \cite{BD}]
\label{lem:shift}
Let $A$ and $B$ be symmetric $N \times N$ real matrices with $AB=O$, $A+B$
nonsingular, and $B \vb = \mathbf{0}$. Then $A(A+B)^{-1} \vb = \vb$.
\end{lemma}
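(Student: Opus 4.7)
The plan is to reduce the claim to showing that $\vy := (A+B)^{-1}\vb$ lies in $\ker(B)$. Indeed, once $B\vy = \mathbf{0}$ is established, then $(A+B)\vy = \vb$ immediately gives $A\vy = \vb$, which is what we want.

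First I would exploit symmetry. Since $A$ and $B$ are symmetric and $AB = O$, taking transposes yields $BA = (AB)^{\top} = O$ as well. In particular $A$ and $B$ commute (both products vanish), although this stronger fact will not be needed directly.

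Next, apply $B$ to both sides of the defining equation $(A+B)\vy = \vb$. This produces
\[
BA\vy + B^2 \vy = B\vb = \mathbf{0}.
\]
The first term vanishes by $BA = O$, leaving $B^2 \vy = \mathbf{0}$. Here is the only delicate step: symmetry of $B$ lets us conclude $B\vy = \mathbf{0}$, since
\[
\|B\vy\|^2 = (B\vy)^{\top}(B\vy) = \vy^{\top} B^{\top} B \vy = \vy^{\top} B^2 \vy = 0.
\]
(This is why symmetry of $B$ is essential; without it, $B^2 \vy = \mathbf{0}$ need not force $B\vy = \mathbf{0}$.)

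Combining, we have $(A+B)\vy = \vb$ and $B\vy = \mathbf{0}$, which together give $A\vy = \vb$, i.e.\ $A(A+B)^{-1}\vb = \vb$. The only real obstacle is noticing that $B^2 \vy = \mathbf{0}$ implies $B\vy = \mathbf{0}$ via the symmetry of $B$; everything else is a direct manipulation of the hypotheses.
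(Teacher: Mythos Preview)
Your argument is correct. The paper itself does not supply a proof of this lemma; it simply cites \cite[Lemma~2.6]{BD}. Your route---setting $\vy=(A+B)^{-1}\vb$, using symmetry to get $BA=O$, deducing $B^2\vy=\mathbf{0}$, and then invoking symmetry of $B$ via $\|B\vy\|^2=\vy^\top B^2\vy=0$ to conclude $B\vy=\mathbf{0}$---is a clean, self-contained verification, and the step you flag as ``delicate'' is exactly the place where the symmetry hypothesis on $B$ is genuinely needed.
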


\section{The adjacency algebra}
\label{sec:aa}

\subsection{A coherent configuration for Sudoku}

Let $X$ be a finite set.
A \emph{coherent configuration} on $X$ is a partition of $X \times X$ into a set of relations $\mathcal{R}=\{R_1,\dots,R_d\}$ satisfying the following properties:
\begin{enumerate}
\item
the union of some relations in $\mathcal{R}$ equals the diagonal $\{(x,x):x \in X\}$;
\item
for each $R$ in $\mathcal{R}$, the transpose relation $R^\top =\{(y,x):(x,y) \in R\}$ is also in $\mathcal{R}$;
\item
for each $i,j,k$, there exists a constant $p_{ij}^k$ such that for any $x,z$ with $(x,z) \in R_k$, there are exactly $p_{ij}^k$ elements $y$ such that $(x,y) \in R_i$ and $(y,z) \in R_j$.
\end{enumerate}

Given a group $G$ acting on a set $X$, the set of orbits of the induced action on $X \times X$ is a coherent configuration, \cite{Higman}.  Here, we set up a coherent configuration on the ground set $X=E(G_{hw})$ using a group of Sudoku symmetries.  
The wreath product $\Sym_h \wr \Sym_w$ acts on rows and preserves the row bundle partition.  Similarly, $\Sym_w \wr \Sym_h$ acts on columns.  The direct product of these acts on $[n]^2$ and in particular the second factors $\Sym_w \times \Sym_h$ act on the $n$ boxes.  Finally, if we take a product with $\Sym_n$ for symbol symmetries, we have the group
$$\Gamma_{hw}=(\Sym_h \wr \Sym_w) \times (\Sym_w \wr \Sym_h) \times \Sym_n \le \mathrm{Aut}(G_{hw})$$
acting on $E(G_{hw}) = \{r_i c_j, r_is_k, c_j s_k, b_\ell s_k: i,j,k,\ell \in [n]\}$.

We describe the relations on ordered pairs of edges in more detail.
Given two rows $r_i$ and $r_{i'}$, we write $r_i \sim r_{i'}$ if and only if 
$\lfloor (i-1)/h \rfloor = \lfloor (i'-1)/h \rfloor$.  Similarly, given two columns $c_j$ and $c_{j'}$, we write $c_j \sim c_{j'}$ iff
$\lfloor (j-1)/w \rfloor = \lfloor (j'-1)/w \rfloor$.  In other words, $\sim$ tracks whether two rows or two columns belong to the same bundle. From the definition, it is clear that $\sim$ is an equivalence relation on both rows and columns.
Write $r_i \sneq r_{i'}$ if $r_i \sim r_{i'}$ but $r_i \neq r_{i'}$.  Define $\sneq$ similarly for columns.

Given two boxes $b_{\ell}$ and $b_{\ell'}$, write $b_\ell \smile b_{\ell'}$ if $\lfloor (\ell-1)/h \rfloor = \lfloor (\ell'-1)/h \rfloor$.  Informally, this keeps track of whether the two boxes occur in the same row bundle.  Write $b_\ell \frown b_{\ell'}$ iff $\ell \equiv \ell' \pmod{h}$; this is the analog for boxes in the same column bundle.
By abuse of notation, we write $r_i \smile b_{\ell}$ to mean that the corresponding row and box intersect, and similarly for $c_j \frown b_{\ell}$.

Ordered pairs of vertices in $G_{hw}$ are partitioned into relations according to Table~\ref{tab:vertex-rels}.  A blank indicates the trivial relation.
Moving from vertices to edges, there are $69$ relations induced on ordered pairs of edges.  These are labelled and displayed in Figure~\ref{fig:edge-rels}. 

\begin{table}[htbp]
\begin{tabular}{c|cccc}
& rows & cols & symbols & boxes \\
\hline
rows & $=,\sneq,\nsim$ & & & $\smile,\not\smile$\\
cols & & $=,\sneq,\nsim$ & & $\frown,\not\frown$\\
symbols & & & $=,\neq$ & \\
boxes & $\smile,\not\smile$ & $\frown,\not\frown$ & & $=,\smile,\frown,\not\asymp$ \\
\end{tabular}
\caption{Relations on vertices of $G_{hw}$}
\label{tab:vertex-rels}
\end{table}

\begin{figure}[htbp]
\begin{tikzpicture}[scale=0.6]
\foreach \a in {0,6,12,18,24}
	\draw[line width=2pt] (0,\a)--(24,\a);
\foreach \a in {0,6,12,18,24}
	\draw[line width=2pt] (\a,0)--(\a,24);

\node at (-0.6,3) {\rotatebox{90}{box-symbol}};
\node at (-0.6,9) {\rotatebox{90}{col-symbol}};
\node at (-0.6,15) {\rotatebox{90}{row-symbol}};
\node at (-0.6,21) {\rotatebox{90}{row-col}};
\node at (3,24.6) {row-col};
\node at (9,24.6) {row-symbol};
\node at (15,24.6) {col-symbol};
\node at (21,24.6) {box-symbol};

\begin{scope}[xshift=0cm,yshift=18cm]
\draw (2,0)--(2,6);
\draw (4,0)--(4,6);
\draw (0,2)--(6,2);
\draw (0,4)--(6,4);
\node[align=center] at (1,5) {\small $r_i=r_{i'}$\\ \small $c_j=c_{j'}$};
\node[align=center] at (3,5) {\small $r_i=r_{i'}$\\ \small $c_j \sneq c_{j'}$};
\node[align=center] at (5,5) {\small $r_i=r_{i'}$\\ \small $c_j \nsim c_{j'}$};

\node[align=center] at (1,3) {\small $r_i \sneq r_{i'}$\\ \small $c_j=c_{j'}$};
\node[align=center] at (3,3) {\small $r_i \sneq r_{i'}$\\ \small $c_j \sneq c_{j'}$};
\node[align=center] at (5,3) {\small $r_i \sneq r_{i'}$\\ \small $c_j \nsim c_{j'}$};

\node[align=center] at (1,1) {$r_i \nsim r_{i'}$\\ \small $c_j=c_{j'}$};
\node[align=center] at (3,1) {$r_i \nsim r_{i'}$\\ \small $c_j \sneq c_{j'}$};
\node[align=center] at (5,1) {$r_i \nsim r_{i'}$\\ \small $c_j \nsim c_{j'}$};
\node[above right=-2pt] at (0,4) {\footnotesize \tt 1};
\node[above right=-2pt] at (2,4) {\footnotesize \tt 2};
\node[above right=-2pt] at (4,4) {\footnotesize \tt 3};
\node[above right=-2pt] at (0,2) {\footnotesize \tt 4};
\node[above right=-2pt] at (2,2) {\footnotesize \tt 5};
\node[above right=-2pt] at (4,2) {\footnotesize \tt 6};
\node[above right=-2pt] at (0,0) {\footnotesize \tt 7};
\node[above right=-2pt] at (2,0) {\footnotesize \tt 8};
\node[above right=-2pt] at (4,0) {\footnotesize \tt 9};
\filldraw[color=yellow,opacity=0.05](0,0) rectangle (6,6);
\end{scope}

\begin{scope}[xshift=6cm,yshift=18cm]
\draw (0,2)--(6,2);
\draw (0,4)--(6,4);
\node[align=center] at (3,5) {$r_i=r_{i'}$};
\node[align=center] at (3,3) {$r_i \sneq r_{i'}$};
\node[align=center] at (3,1) {$r_i \nsim r_{i'}$};
\node[above right=-2pt] at (0,4) {\footnotesize \tt 10};
\node[above right=-2pt] at (0,2) {\footnotesize \tt 11};
\node[above right=-2pt] at (0,0) {\footnotesize \tt 12};
\filldraw[color=red,opacity=0.05](0,0) rectangle (6,6);
\end{scope}

\begin{scope}[xshift=12cm,yshift=18cm]
\draw (2,0)--(2,6);
\draw (4,0)--(4,6);
\node[align=center] at (1,3) {\small $c_j=c_{j'}$};
\node[align=center] at (3,3) {\small $c_j \sneq c_{j'}$};
\node[align=center] at (5,3) {\small $c_j \nsim c_{j'}$};
\node[above right=-2pt] at (0,0) {\footnotesize \tt 22};
\node[above right=-2pt] at (2,0) {\footnotesize \tt 23};
\node[above right=-2pt] at (4,0) {\footnotesize \tt 24};
\filldraw[color=green,opacity=0.05](0,0) rectangle (6,6);
\end{scope}

\begin{scope}[xshift=18cm,yshift=18cm]
\draw[decorate,decoration={coil,aspect=0,amplitude=1pt}] (3,0)--(3,6);
\draw[decorate,decoration={coil,aspect=0,amplitude=1pt}] (0,3)--(6,3);
\node[align=center] at (1.5,4.5) {$b_\ell$\\ $\asymp$ \\$b_{\text{box}(i,j)}$};
\node[align=center] at (4.5,4.5) {$b_\ell$\\ $\smile$ \\$b_{\text{box}(i,j)}$};
\node[align=center] at (1.5,1.5) {$b_\ell$\\ $\frown$ \\$b_{\text{box}(i,j)}$};
\node[align=center] at (4.5,1.5) {$b_\ell$\\ $\not\asymp$ \\$b_{\text{box}(i,j)}$};
\node[above right=-2pt] at (0,3) {\footnotesize \tt 38};
\node[above right=-2pt] at (3,3) {\footnotesize \tt 39};
\node[above right=-2pt] at (0,0) {\footnotesize \tt 40};
\node[above right=-2pt] at (3,0) {\footnotesize \tt 41};
\filldraw[color=orange,opacity=0.05](0,0) rectangle (6,6);
\end{scope}

\begin{scope}[xshift=0cm,yshift=12cm]
\draw (0,2)--(6,2);
\draw (0,4)--(6,4);
\node[align=center] at (3,5) {$r_i=r_{i'}$};
\node[align=center] at (3,3) {$r_i \sneq r_{i'}$};
\node[align=center] at (3,1) {$r_i \nsim r_{i'}$};
\node[above right=-2pt] at (0,4) {\footnotesize \tt 13};
\node[above right=-2pt] at (0,2) {\footnotesize \tt 14};
\node[above right=-2pt] at (0,0) {\footnotesize \tt 15};
\filldraw[color=red,opacity=0.05](0,0) rectangle (6,6);
\end{scope}

\begin{scope}[xshift=6cm,yshift=12cm]
\draw (0,2)--(6,2);
\draw (0,4)--(6,4);
\draw (2,0)--(4,2);
\draw (2,2)--(4,4);
\draw (2,4)--(4,6);
\node[align=center] at (1.3,5) {$r_i=r_{i'}$\\$s_k=s_{k'}$};
\node[align=center] at (1.3,3) {$r_i \sneq r_{i'}$\\$s_k=s_{k'}$};
\node[align=center] at (1.3,1) {$r_i \nsim r_{i'}$\\$s_k=s_{k'}$};
\node[align=center] at (4.6,5) {$r_i=r_{i'}$\\$s_k \neq s_{k'}$};
\node[align=center] at (4.6,3) {$r_i \sneq r_{i'}$\\$s_k \neq s_{k'}$};
\node[align=center] at (4.6,1) {$r_i \nsim r_{i'}$\\$s_k \neq s_{k'}$};
\node[above=-2pt] at (3,5.5) {\footnotesize \tt 16};
\node[below=-2pt] at (3,4.5) {\footnotesize \tt 17};
\node[above=-2pt] at (3,3.5) {\footnotesize \tt 18};
\node[below=-2pt] at (3,2.5) {\footnotesize \tt 19};
\node[above=-2pt] at (3,1.5) {\footnotesize \tt 20};
\node[below=-2pt] at (3,0.5) {\footnotesize \tt 21};
\filldraw[color=magenta,opacity=0.05](0,0) rectangle (6,6);
\end{scope}

\begin{scope}[xshift=12cm,yshift=12cm]
\draw (0,0)--(6,6);
\node[align=center] at (2,4) {\large $s_k=s_{k'}$};
\node[align=center] at (4,2) {\large $s_k \neq s_{k'}$};
\node[right=-2pt] at (0,1) {\footnotesize \tt 28};
\node[above=-2pt] at (1,0) {\footnotesize \tt 29};
\filldraw[color=blue,opacity=0.05](0,0) rectangle (6,6);
\end{scope}

\begin{scope}[xshift=18cm,yshift=12cm]
\draw[decorate,decoration={coil,aspect=0,amplitude=1pt}] (0,3)--(6,3);
\draw (1.5,0)--(4.5,3);
\draw (1.5,3)--(4.5,6);
\node[align=center] at (1.5,4.7) {$\smile$\\$s_k=s_{k'}$};
\node[align=center] at (4.5,4.3) {$\smile$\\$s_k \neq s_{k'}$};
\node[align=center] at (1.5,1.7) {$\not\smile$\\$s_k=s_{k'}$};
\node[align=center] at (4.5,1.3) {$\not\smile$\\$s_k\neq s_{k'}$};
\node[above right=-2pt] at (0,3) {\footnotesize \tt 46};
\node[above right=-2pt] at (2,3) {\footnotesize \tt 47};
\node[above right=-2pt] at (0,0) {\footnotesize \tt 48};
\node[above right=-2pt] at (2,0) {\footnotesize \tt 49};
\filldraw[color=black!50!red,opacity=0.05](0,0) rectangle (6,6);
\end{scope}

\begin{scope}[xshift=0cm,yshift=6cm]
\draw (2,0)--(2,6);
\draw (4,0)--(4,6);
\node[align=center] at (1,3) {\small $c_j=c_{j'}$};
\node[align=center] at (3,3) {\small $c_j \sneq c_{j'}$};
\node[align=center] at (5,3) {\small $c_j \nsim c_{j'}$};
\node[above right=-2pt] at (0,0) {\footnotesize \tt 25};
\node[above right=-2pt] at (2,0) {\footnotesize \tt 26};
\node[above right=-2pt] at (4,0) {\footnotesize \tt 27};
\filldraw[color=green,opacity=0.05](0,0) rectangle (6,6);
\end{scope}

\begin{scope}[xshift=6cm,yshift=6cm]
\draw (0,0)--(6,6);
\node[align=center] at (2,4) {\large $s_k=s_{k'}$};
\node[align=center] at (4,2) {\large $s_k \neq s_{k'}$};
\node[right=-2pt] at (0,1) {\footnotesize \tt 30};
\node[above=-2pt] at (1,0) {\footnotesize \tt 31};
\filldraw[color=blue,opacity=0.05](0,0) rectangle (6,6);
\end{scope}

\begin{scope}[xshift=12cm,yshift=6cm]
\draw (2,0)--(2,6);
\draw (4,0)--(4,6);
\draw (0,2)--(2,4);
\draw (2,2)--(4,4);
\draw (4,2)--(6,4);
\node[align=center] at (1,4.6) {\small $c_j=c_{j'}$\\ \small $s_k=s_{k'}$};
\node[align=center] at (3,4.6) {\small $c_j \sneq c_{j'}$\\ \small $s_k=s_{k'}$};
\node[align=center] at (5,4.6) {\small $c_j \nsim c_{j'}$\\ \small $s_k=s_{k'}$};
\node[align=center] at (1,1.3) {\small $c_j=c_{j'}$\\ \small $s_k \neq s_{k'}$};
\node[align=center] at (3,1.3) {\small $c_j \sneq c_{j'}$\\ \small $s_k \neq s_{k'}$};
\node[align=center] at (5,1.3) {\small $c_j \nsim c_{j'}$\\ \small $s_k \neq s_{k'}$};
\node at (0.5,3) {\footnotesize \tt 32};
\node at (1.5,3) {\footnotesize \tt 33};
\node at (2.5,3) {\footnotesize \tt 34};
\node at (3.5,3) {\footnotesize \tt 35};
\node at (4.5,3) {\footnotesize \tt 36};
\node at (5.5,3) {\footnotesize \tt 37};
\filldraw[color=cyan,opacity=0.05](0,0) rectangle (6,6);
\end{scope}

\begin{scope}[xshift=18cm,yshift=6cm]
\draw[decorate,decoration={coil,aspect=0,amplitude=1pt}] (3,0)--(3,6);
\draw (0,1.5)--(3,4.5);
\draw (3,1.5)--(6,4.5);
\node[align=center] at (1.4,4.6) {$\frown$\\$s_k=s_{k'}$};
\node[align=center] at (4.4,4.6) {$\not\frown$\\$s_k = s_{k'}$};
\node[align=center] at (1.6,1.4) {$\frown$\\$s_k \neq s_{k'}$};
\node[align=center] at (4.6,1.4) {$\not\frown$\\$s_k\neq s_{k'}$};
\node[above right=-2pt] at (0,2.2) {\footnotesize \tt 54};
\node[above right=-2pt] at (0,0) {\footnotesize \tt 55};
\node[above right=-2pt] at (3,2.2) {\footnotesize \tt 56};
\node[above right=-2pt] at (3,0) {\footnotesize \tt 57};
\filldraw[color=black!50!green,opacity=0.05](0,0) rectangle (6,6);
\end{scope}

\begin{scope}[xshift=0cm,yshift=0cm]
\draw[decorate,decoration={coil,aspect=0,amplitude=1pt}] (3,0)--(3,6);
\draw[decorate,decoration={coil,aspect=0,amplitude=1pt}] (0,3)--(6,3);
\node[align=center] at (1.5,4.5) {$b_\ell$\\ $\asymp$ \\$b_{\text{box}(i,j)}$};
\node[align=center] at (4.5,4.5) {$b_\ell$\\ $\smile$ \\$b_{\text{box}(i,j)}$};
\node[align=center] at (1.5,1.5) {$b_\ell$\\ $\frown$ \\$b_{\text{box}(i,j)}$};
\node[align=center] at (4.5,1.5) {$b_\ell$\\ $\not\asymp$ \\$b_{\text{box}(i,j)}$};
\node[above right=-2pt] at (0,3) {\footnotesize \tt 42};
\node[above right=-2pt] at (3,3) {\footnotesize \tt 43};
\node[above right=-2pt] at (0,0) {\footnotesize \tt 44};
\node[above right=-2pt] at (3,0) {\footnotesize \tt 45};
\filldraw[color=orange,opacity=0.05](0,0) rectangle (6,6);
\end{scope}

\begin{scope}[xshift=6cm,yshift=0cm]
\draw[decorate,decoration={coil,aspect=0,amplitude=1pt}] (0,3)--(6,3);
\draw (1.5,0)--(4.5,3);
\draw (1.5,3)--(4.5,6);
\node[align=center] at (1.5,4.7) {$\smile$\\$s_k=s_{k'}$};
\node[align=center] at (4.5,4.3) {$\smile$\\$s_k \neq s_{k'}$};
\node[align=center] at (1.5,1.7) {$\not\smile$\\$s_k=s_{k'}$};
\node[align=center] at (4.5,1.3) {$\not\smile$\\$s_k\neq s_{k'}$};
\node[above right=-2pt] at (0,3) {\footnotesize \tt 50};
\node[above right=-2pt] at (2,3) {\footnotesize \tt 51};
\node[above right=-2pt] at (0,0) {\footnotesize \tt 52};
\node[above right=-2pt] at (2,0) {\footnotesize \tt 53};
\filldraw[color=black!50!red,opacity=0.05](0,0) rectangle (6,6);
\end{scope}

\begin{scope}[xshift=12cm,yshift=0cm]
\draw[decorate,decoration={coil,aspect=0,amplitude=1pt}] (3,0)--(3,6);
\draw (0,1.5)--(3,4.5);
\draw (3,1.5)--(6,4.5);
\node[align=center] at (1.4,4.6) {$\frown$\\$s_k=s_{k'}$};
\node[align=center] at (4.4,4.6) {$\not\frown$\\$s_k = s_{k'}$};
\node[align=center] at (1.6,1.4) {$\frown$\\$s_k \neq s_{k'}$};
\node[align=center] at (4.6,1.4) {$\not\frown$\\$s_k\neq s_{k'}$};
\node[above right=-2pt] at (0,2.2) {\footnotesize \tt 58};
\node[above right=-2pt] at (0,0) {\footnotesize \tt 59};
\node[above right=-2pt] at (3,2.2) {\footnotesize \tt 60};
\node[above right=-2pt] at (3,0) {\footnotesize \tt 61};
\filldraw[color=black!50!green,opacity=0.05](0,0) rectangle (6,6);
\end{scope}

\begin{scope}[xshift=18cm,yshift=0cm]
\draw[decorate,decoration={coil,aspect=0,amplitude=1pt}] (3,0)--(3,6);
\draw[decorate,decoration={coil,aspect=0,amplitude=1pt}] (0,3)--(6,3);
\draw (0,0)--(3,3);
\draw (0,3)--(3,6);
\draw (3,0)--(6,3);
\draw (3,3)--(6,6);
\node[align=center] at (1,5) {$\asymp,=$};
\node[align=center] at (2,4) {$\asymp,\neq$};
\node[align=center] at (4,5) {$\smile,=$};
\node[align=center] at (5,4) {$\smile,\neq$};
\node[align=center] at (1,2) {$\frown,=$};
\node[align=center] at (2,1) {$\frown,\neq$};
\node[align=center] at (4,2) {$\not\asymp,=$};
\node[align=center] at (5,1) {$\not\asymp,\neq$};
\node[right=-2pt]  at (0,4) {\footnotesize \tt 62};
\node[above=-2pt]  at (1,3.1) {\footnotesize \tt 63};
\node[right=-2pt]  at (3,4) {\footnotesize \tt 64};
\node[above=-2pt]  at (4,3.1) {\footnotesize \tt 65};
\node[right=-2pt]  at (0,1) {\footnotesize \tt 66};
\node[above=-2pt]  at (1,0.1) {\footnotesize \tt 67};
\node[right=-2pt]  at (3,1) {\footnotesize \tt 68};
\node[above=-2pt]  at (4,0.1) {\footnotesize \tt 69};
\filldraw[color=gray,opacity=0.05](0,0) rectangle (6,6);
\end{scope}
\end{tikzpicture}
\caption{Relations on edges of $G_{hw}$}
\label{fig:edge-rels}
\end{figure}

\begin{prop}
The relations $R_1,\dots,R_{69}$ given in Figure~\ref{fig:edge-rels} define a coherent configuration on on $E(G_{hw})$.
\end{prop}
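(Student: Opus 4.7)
The plan is to realize the 69 relations as orbits of a group action on $E(G_{hw})^2$ and then invoke the standard fact that any orbit partition coming from a group action is a coherent configuration (a \emph{Schurian} configuration).

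Let $H = (S_h \wr S_w) \times (S_w \wr S_h) \times S_n$. The first factor acts on rows, permuting within each row-bundle and permuting the bundles themselves; the second acts on columns analogously; and the third permutes symbols. Since a box $b_\ell$ is determined by its (row-bundle, column-bundle) pair, the action on bundles induces an action of $S_w \times S_h$ on $\{b_1,\dots,b_n\}$. Putting these together, $H$ sits inside $\mathrm{Aut}(G_{hw})$ and acts on $E(G_{hw})$ and diagonally on $E(G_{hw})^2$.

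Next I would verify that the 69 relations coincide with the $H$-orbits on $E(G_{hw})^2$. Every defining condition in Figure~\ref{fig:edge-rels} --- the edge types, the equalities and bundle-relations on the endpoints, and the box relations $\asymp,\smile,\frown,\not\asymp$ --- is an $H$-invariant, so each relation is a union of orbits. The reverse inclusion is a block-by-block check: given two pairs in the same relation, one builds an element of $H$ mapping one to the other by choosing permutations independently in each factor. For instance, in relation $41$ (where $r_ic_j$ and $b_\ell s_k$ satisfy $b_\ell \not\asymp b_{\text{box}(i,j)}$), the row-bundles of $r_i$ and of $b_\ell$ differ, as do their column-bundles; since $h,w \ge 2$, the outer symmetric group factors provide enough room to align both pairs simultaneously while also applying a symbol permutation.

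Once the orbit identification is in place, the three axioms of a coherent configuration follow quickly. Axiom (1) holds because the diagonal $\{(e,e) : e \in E(G_{hw})\}$ equals $R_1 \cup R_{16} \cup R_{32} \cup R_{62}$, the four ``everything-equal'' relations in the diagonal blocks of Figure~\ref{fig:edge-rels}. Axiom (2) holds because the swap $(e,e') \mapsto (e',e)$ commutes with the $H$-action, so transposes of orbits are orbits; concretely, off-diagonal blocks pair up (e.g.\ $R_{10}^\top = R_{13}$ and $R_{38}^\top = R_{42}$) and the relations inside each diagonal block are visibly symmetric. Axiom (3) is automatic for any Schurian partition: for fixed $(x,z)$, the count $|\{y : (x,y) \in R_i,\ (y,z) \in R_j\}|$ is $H$-invariant in $(x,z)$, so depends only on the orbit $R_k$ containing $(x,z)$.

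The main obstacle is the orbit identification --- confirming that \emph{each} of the 69 relations is a single $H$-orbit, not a proper union. This is a routine but lengthy case analysis across the sixteen blocks of Figure~\ref{fig:edge-rels}, kept tractable by the explicit factored form of $H$.
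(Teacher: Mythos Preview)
Your approach is correct and takes a genuinely different route from the paper. The paper verifies the three axioms directly: it identifies the diagonal as $R_1\cup R_{16}\cup R_{32}\cup R_{62}$, reads transposes off Figure~\ref{fig:edge-rels}, and for axiom~(3) argues that the vertices in $x\cup z$ can always be taken in a canonical position (e.g.\ $r_1$, or $r_1,r_2$, or $r_1,r_{h+1}$, etc.) so that the count of intermediate $y$'s is independent of the particular $(x,z)$. You instead package the symmetry as an explicit group $H=(S_h\wr S_w)\times(S_w\wr S_h)\times S_n$, observe that $H$ acts on $E(G_{hw})$ through $\mathrm{Aut}(G_{hw})$, and appeal to the standard fact that 2-orbits of any permutation group form a (Schurian) coherent configuration. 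The paper's canonical-choice step and your orbit-identification step are doing the same work, but your framing explains \emph{why} the verification succeeds rather than carrying it out ad hoc; it also names the automorphism subgroup explicitly, which is a useful byproduct. The paper's version, by contrast, is self-contained and avoids importing the Schurian machinery. One small point worth making explicit in your write-up: the induced action of $H$ on boxes factors through the quotient $S_w\times S_h$ (the top groups of the two wreath products), and this is what guarantees compatibility with the $\mathrm{box}(i,j)$ map when checking that relations like $R_{38}$--$R_{45}$ are single orbits.
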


\begin{proof}
The relations $=,\sneq,\not\sim$ are orbits of $\Sym_h \wr \Sym_w$ (respectively 
$\Sym_w \wr \Sym_r$) acting on pairs of rows (columns).  The direct product of second factors $\Sym_w \times \Sym_h$ induces relations $=,\smile,\frown,\asymp$ on ordered pairs of boxes. The relations $\smile,\not\smile$ (respectivley $\frown,\not\frown$) also define orbits on ordered pairs of rows (columns) with boxes.  
Finally, the relations $=,\neq$ define orbits of $\Sym_n$ acting on pairs of symbols.  
It follows that the set of relations $\{R_1,\dots,R_{69}\}$ matches the coherent configuration from the action of $\Gamma_{hw}$ on $E(G_{hw})$.
\end{proof}

The diagonal relation $\{(x,x): x \in X\}$ is a union of $R_1,R_{16},R_{32},R_{62}$.  For any relation $R_i$, its transpose $R_i^\top$ can be identified directly from Figure~\ref{fig:edge-rels}.
With extensive case analysis, it would be technically possible to demonstrate formulas for the structure constants $p_{ij}^k$.  However, to avoid presenting such details and to reduce errors, we implemented the following computer-assisted procedure:
\vspace{-11pt}
\begin{itemize}
\item
first, we argue that $p_{ij}^k$ belongs to $\Z[h,w]$, and is at most quadratic in each of $h,w$;
\item
next, we compute all structure constants explicitly for the nine cases $2 \le h,w \le 4$;
\item
finally, we interpolate this data to arrive at symbolic expressions for $p_{ij}^k$.
\end{itemize}

We discuss these points in a little more detail.

\begin{prop}
\label{prop:struct-linear}
In the coherent configuration defined by $\Gamma_{hw}$, each structure constant $p_{ij}^k$ is a polynomial of degree at most $2$ in each of $h$ and $w$.
\end{prop}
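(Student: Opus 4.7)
The plan is to fix a pair $(x,z) \in R_k$, write $y = u'v'$ with $u' \in A$ and $v' \in B$ for the two partite sets dictated by the edge-type of $y$ (which is itself determined by $R_i$), and show $p_{ij}^k$ factors as $N(u') \cdot N(v')$ with each factor polynomial in $h, w$ of degree at most $1$ in each variable. The first step is to unpack the edge-relations $R_i, R_j$ into pairwise vertex-level conditions using Table~\ref{tab:vertex-rels} and Figure~\ref{fig:edge-rels}. Most such conditions lie within a single partite set (row-row, col-col, sym-sym, box-box) or connect a box vertex to a row/column vertex via $\smile$ or $\frown$; none of these couple $u'$ to $v'$. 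Since $A \neq B$ and every compatible pair $(u',v')$ determines a valid edge of $G_{hw}$, the conclusion will be $p_{ij}^k = N(u')\cdot N(v')$.

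The step I expect to be the main technical obstacle is handling the relations between a row-col edge and a box-sym edge. These are indexed in Figure~\ref{fig:edge-rels} by the position of the box vertex $b_\ell$ relative to the \emph{implicit} box $b_{\text{box}(i,j)}$ of the row-col edge, which depends jointly on both of its endpoints. I would resolve this by observing that the four options $\{=,\smile,\frown,\not\asymp\}$ for $b_\ell$ versus $b_{\text{box}(i,j)}$ correspond bijectively to the four truth assignments of the two independent conditions $(r_i \smile b_\ell)$ and $(c_j \frown b_\ell)$. In particular, this apparently coupled relation factors into an independent row-constraint and a column-constraint, and the decoupling argument then goes through uniformly across all combinations of edge-types.

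The second step is a short case analysis showing each factor $N$ is a polynomial of degree at most $1$ in each of $h, w$. For a symbol vertex, $N$ is one of $0$, $1$, or $hw - c$ with $c$ a fixed nonnegative integer. For a row vertex, $N$ is either a small constant (when an equality constraint pins the row) or has the form $\alpha(h - c_1)$, where $\alpha \in \{1, w - c_2\}$ counts admissible row bundles (pinned by any positive $\sim$ or $\smile$ constraint, otherwise reduced from $w$ by a constant number of negative ones, with all reference coincidences fixed by $R_k$) and $h - c_1$ counts admissible rows within a pinned bundle; columns are analogous. A box is specified by its row bundle together with its column bundle, and the box-box, box-row, and box-col constraints decouple across these two coordinates, so $N$ again takes the form (row-bundle factor)(column-bundle factor), each of degree at most $1$ in a single variable. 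Multiplying two such factors gives a polynomial of degree at most $2$ in each of $h$ and $w$, as claimed.
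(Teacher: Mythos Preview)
Your proposal is correct and follows essentially the same approach as the paper: both arguments fix $(x,z)\in R_k$, observe that the edge-type of $y$ is forced, and then factor $p_{ij}^k$ as a product of independent vertex-counts, each of degree at most $1$ in each of $h$ and $w$. Your treatment is in fact more thorough on one point: you explicitly justify the decoupling in the row--column versus box--symbol case by rewriting the four box--box relations $\{=,\smile,\frown,\not\asymp\}$ as independent conditions $(r_i\smile b_\ell)$ and $(c_j\frown b_\ell)$, whereas the paper simply asserts that ``the two components of each pair can be selected separately.'' Conversely, the paper is a bit more concrete in its final step, listing explicit finite sets of possible values (e.g.\ $\{0,1,h,h-1,h-2,n-h\}$ for rows) rather than the parametric form $\alpha(h-c_1)$ you use; both descriptions yield the same degree bound.
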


\begin{proof}
Fix two edges $x,z \in E(G_{hw})$ with $(x,z) \in R_k$.  The quantity $p_{ij}^k$ counts the edges $y \in E(G_{hw})$ with $(x,y) \in R_i$ and $(y,z) \in R_j$.  This quantity is zero unless the indices $i$ and $j$ simultaneously allow one of the four types of edges for $y$.  Given indices $i$ and $j$ which admit a choice of $y$, we must choose either a row-column pair, a row-symbol pair, a column-symbol pair, or a box-symbol pair.  The two components of each pair can be selected separately, leading to a product of choices for the two components.  Each factor is easily seen to have degree at most one in both $h$ and $w$.  The number of choices for a row is an element of $\{0,1,h-2,h-1,h,n-2h,n-h,n\}$.
Similarly, the number of choices for a column is an element of $\{0,1,w-2,w-1,w,n-2w,n-w,n\}$.  The number of choices for a symbol is an element of $\{0,1,n-2,n-1,n\}$. Finally, the number of choices for a box is a product of an element of $\{0,1,h-2,h-1,h\}$ with an element of $\{0,1,w-2,w-1,w\}$.
\end{proof}

The choice of nine cases $2 \le h,w \le 4$ suffices because of the degree bound in Proposition~\ref{prop:struct-linear}.  The computation was carried out on computer by explicitly listing all $4h^2w^2$ edges and counting incidences.  This took several minutes for the larger cases.  From this, the interpolation in (3) can be performed using a $9 \times 9$ Vandermonde matrix based on the terms $1,h,w,h^2,hw,w^2,h^2w,hw^2,h^2w^2$, where $(h,w) \in \{2,3,4\}^2$.

For each relation index $i=1,\dots,69$, we consider its corresponding adjacency matrix $A_i$.  Let $\mathfrak{A}$ denote the $\R$-vector space spanned by the $A_i$.  Since the relations form a coherent configuration, $\mathfrak{A}$ is closed under matrix multiplication, and hence forms an algebra.

If we view each relation as a graph, then $\{R_i:i=1,\dots,69\}$ is a decomposition of the line graph of $G_{hw}$ into regular graphs.  The degrees of these graphs are the nonzero row sums of the corresponding adjacency matrices.  We give the degrees $d_i$ for each of the relations in Table~\ref{tab:degrees}.  These are arranged into columns according to the four edge types: row-column, row-symbol, column-symbol, and symbol-box.
Consider, for example, the degree $d_{47}$.  Given a row-symbol edge, say $r_1s_1$, this degree counts the symbol-box edges $s_k b_\ell$ with $s_k \neq s_1$ and $b_\ell$ in the same row bundle as $r_1$.  There are $n-1$ choices for $s_k$ and $h$ choices for $b_\ell$, since every row is incident with exactly $n/w=h$ boxes.  So $d_{47}=(n-1)h$.

\begin{table}[htbp]
\small
$$\begin{array}{|rr|rr|rr|rr|}
\hline
i &  d_i & i &  d_i  & i & d_i & i &  d_i \\
\hline
1	&	1	&	13	&	n	&	25	&	n	&	42	&	n	\\
2	&	w-1	&	14	&	n(h-1)	&	26	&	n(w-1)	&	43	&	n(h-1)	\\
3	&	(h-1)w	&	15	&	nh(w-1)	&	27	&	n(h-1)w	&	44	&	n(w-1)	\\
4	&	h-1	&	16	&	1	&	30	&	n	&	45	&	n(h-1)(w-1)	\\
5	&	(h-1)(w-1)	&	17	&	n-1	&	31	&	n(n-1)	&	50	&	h	\\
6	&	(h-1)^2w	&	18	&	h-1	&	32	&	1	&	51	&	(n-1)h	\\
7	&	h(w-1)	&	19	&	(n-1)(h-1)	&	33	&	n-1	&	52	&	h(w-1)	\\
8	&	h(w-1)^2	&	20	&	h(w-1)	&	34	&	w-1	&	53	&	(n-1)h(w-1)	\\
9	&	n(h-1)(w-1)	&	21	&	(n-1)h(w-1)	&	35	&	(n-1)(w-1)	&	58	&	w	\\
10	&	n	&	28	&	n	&	36	&	(h-1)w	&	59	&	(n-1)w	\\
11	&	n(h-1)	&	29	&	n(n-1)	&	37	&	(n-1)(h-1)w	&	60	&	(h-1)w	\\
12	&	nh(w-1)	&	46	&	h	&	54	&	w	&	61	&	(n-1)(h-1)w	\\
22	&	n	&	47	&	(n-1)h	&	55	&	(n-1)w	&	62	&	1	\\
23	&	n(w-1)	&	48	&	h(w-1)	&	56	&	(h-1)w	&	63	&	n-1	\\
24	&	n(h-1)w	&	49	&	(n-1)h(w-1)	&	57	&	(n-1)(h-1)w	&	64	&	h-1	\\
38	&	n	&		&		&		&		&	65	&	(n-1)(h-1)	\\
39	&	n(h-1)	&		&		&		&		&	66	&	w-1	\\
40	&	n(w-1)	&		&		&		&		&	67	&	(n-1)(w-1)	\\
41	&	n(h-1)(w-1)	&		&		&		&		&	68	&	(h-1)(w-1)	\\
	&		&		&		&		&		&	69	&	(n-1)(h-1)(w-1)	\\
\hline
\end{array}$$
\normalsize
\caption{Relation degrees $d_i$; alternatively the nonzero row sums of $A_i$}
\label{tab:degrees}
\end{table}

\subsection{The coefficient matrix $M$}

Recall that $W$ denotes the $\{0,1\}$ inclusion matrix of edges versus tiles in $G_{hw}$, and that $M=WW^\top$.  We computed the rank and a basis for the kernel of $M$ in Section~\ref{sec:setup}.  A key next observation is that $M$ belongs to our adjacency algebra.

\begin{prop}
The matrix $M=WW^\top$ lies in $\mathfrak{A}$, with
\begin{align}
\label{eq:M}
M=&hw(A_1+A_{16}+A_{32}+A_{62})
+w(A_{46}+A_{50})
+h(A_{54}+A_{58})\\
\nonumber
&+A_{10}+A_{13}+A_{22}+A_{25}+A_{28}+A_{30}+A_{38}+A_{42}.
\end{align}
\end{prop}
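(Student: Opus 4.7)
The plan is to exploit the combinatorial interpretation of $M$: since $W$ is the $\{0,1\}$ inclusion matrix of edges versus tiles, we have
\begin{equation*}
M(e,e') = |\{t \in T(G_{hw}) : e, e' \in t\}|.
\end{equation*}
To show $M \in \mathfrak{A}$, it suffices to verify that $M(e,e')$ depends only on which relation $R_i$ contains $(e,e')$ and to identify that common value. Since each tile $t = r_i c_j s_k b_\ell$ (with $b_\ell = \mathrm{box}(i,j)$) is determined by $(i,j,k) \in [n]^3$ and contains exactly one edge of each of the four types row-column, row-symbol, column-symbol, and box-symbol, the counting splits naturally by the edge-type pair of $(e,e')$.

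The diagonal case $e=e'$ handles the four diagonal relations $R_1, R_{16}, R_{32}, R_{62}$. For any edge $e$ of any type, there is a single free parameter among the triple $(i,j,k)$ after $e$ is fixed: a row-column edge leaves the symbol free, a row-symbol or column-symbol edge leaves the other coordinate (and hence box) free, and a box-symbol edge leaves the cell in $b_\ell$ free. Each option contributes $n=hw$ tiles, explaining the coefficient $hw$ of the four diagonal adjacency matrices.

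For the off-diagonal cases I would walk through the six unordered pairs of distinct edge types. When the two edges jointly determine all of row, column, and symbol (which then forces the box), exactly one tile contains both; this is precisely what happens for the pairs (row-col, row-symbol), (row-col, col-symbol), (row-col, box-symbol), and (row-symbol, col-symbol), under the compatibility condition of shared row, shared column, $\mathrm{box}(i,j)=\ell$, and shared symbol respectively. If that compatibility fails, no tile contains both edges. Reading the compatible sub-relations off Figure~\ref{fig:edge-rels} yields the coefficient-$1$ terms $A_{10}, A_{13}, A_{22}, A_{25}, A_{28}, A_{30}, A_{38}, A_{42}$ (each unordered pair appears twice to account for transposition). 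The two remaining pairs leave one genuine degree of freedom: for (row-symbol, box-symbol) with $s_k=s_{k'}$ and $r_i\smile b_\ell$, the column may be any of the $w$ positions in row $r_i$ inside $b_\ell$, giving $wA_{46}+wA_{50}$; symmetrically, (column-symbol, box-symbol) with $c_j\frown b_\ell$ yields $hA_{54}+hA_{58}$. For matching-type pairs $(e,e')$ with $e\neq e'$, a tile cannot carry two distinct edges of the same type, so $M(e,e')=0$.

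There is no real conceptual obstacle here; the work is bookkeeping. The main difficulty, such as it is, is keeping careful track of which of the $69$ relations corresponds to each compatibility pattern on $(i,j,k,\ell)$ and confirming that no relation outside the listed eight singletons, four $hw$-diagonal terms, and four $w$- or $h$-terms receives a nonzero contribution. Once this correspondence is read off from Figure~\ref{fig:edge-rels} and Table~\ref{tab:vertex-rels}, the stated expansion of $M$ follows immediately.
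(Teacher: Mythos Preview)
Your proposal is correct and follows essentially the same approach as the paper: both interpret $M(e,e')$ as the number of tiles containing $e$ and $e'$, handle the diagonal case by counting the single free parameter to get $n=hw$, identify the coefficient-$1$ relations as those where $e\cup e'$ determines a unique tile, and obtain the $w$- and $h$-coefficients from the free column (resp.\ row) when pairing a row-symbol (resp.\ column-symbol) edge with a compatible box-symbol edge. The paper's write-up is slightly more explicit about why incompatible pairs vanish, but the argument is the same.
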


\begin{proof}
Given two edges $e,f$ in $G_{hw}$, the entry $M(e,f)$ equals the number of tiles containing both $e$ and $f$.  Since a tile contains exactly one row, column, box, and symbol, this number is zero whenever $e \cup f$ contains two distinct vertices of the same type.  Moreover, since the box in a tile must correspond with the row-column pair, $M(e,f)$ is zero if $e \cup f \supset \{r_i,b_\ell\}$ or $\{c_j,b_\ell\}$  with, respectively $r_i \not\smile b_\ell$ or $c_j \not\frown b_\ell$. It suffices to consider those remaining cases when there exist tiles extending $e \cup f$.

If $e=f$, we claim that there are $n$ such tiles, regardless of the type of edge.  For $e=\{r_i,c_j\}$, any of the $n$ symbols extend $e$ to a tile (and there is a unique box involved).  For $e=\{r_i,s_k\}$, there are any of $n$ columns (with one corresponding box for each) extending $e$.  This is similar when we exchange the roles of rows and columns.  Finally, for $e=\{b_\ell,s_k\}$, any of the $hw=n$ cells $(i,j)$ with $\mathrm{box}(i,j)=\ell$ extend $e$ to a tile.  The identity relation in our setup decomposes into the identity on the four types of edges; in terms of matrices,
$$I=A_1+A_{16}+A_{32}+A_{62}.$$
We have shown that the diagonal entries of $M$, and hence the coefficient for each of these four adjacency matrices, equals $n$.

Next, consider $e=\{r_i,s_k\}$ and $f=\{b_\ell,s_k\}$.  In the event that $r_i \smile b_\ell$, we obtain $w$ possible columns $c_j$ such that $\mathrm{box}(i,j)=\ell$, and $e \cup f \cup \{c_j\}$ defines a valid tile.  The two relations corresponding to this choice of $e$ and $f$ (transposes of each other) have indices $46$ and $50$ in our labeling.  If instead we take $e=\{c_j,s_k\}$ for $c_j \frown b_\ell$, there are likewise exactly $h$ extensions to a tile via some row $r_i$.  This choice corresponds to relations numbered $54$ and $58$.

Finally, in each of the following possibilities for $\{e,f\}$, there is a unique tile $r_ic_js_k b_\ell$ extending $e \cup f$, where $\ell=\mathrm{box}(i,j)$:
$$\{r_i c_j, r_i s_k\},\{r_i c_j, c_j s_k\}, \{r_i s_k, c_j s_k\}, \{r_i c_j, b_\ell s_k\}.$$
The corresponding relation labels are $10,13,22,25,28,30,38,42$.
\end{proof}

The structure of entries of $M$ is depicted in Figure~\ref{fig:M}.  On the left, we present $M$ as a block matrix, whose block partition corresponds to the four edge types.  Each block is an $n^2 \times n^2$ matrix which can be factored as a Kronecker product.  It is convenient to slightly abuse the Kronecker product in the following way.  In forming $A \otimes B$, each factor will be indexed by one of our four Sudoku objects: rows, columns, symbols, and boxes.  The product is then indexed by corresponding pairs of elements.  For instance, the $(1,2)$-block of $M$ can be represented as $I_r \otimes J_{cs}$, where $I_r$ is the identity matrix indexed by $\{r_1,\dots,r_n\}$ and $J_{cs}$ is the all-ones matrix whose rows are indexed by $\{c_1,\dots,c_n\}$ and columns are indexed by $\{s_1,\dots,s_n\}$.  The latter can be factored as $\mathbf{j}_c \otimes \mathbf{j}_s^\top$, where $\mathbf{j}$ is an $n \times 1$ all-ones vector and the subscript indicates the indexing set.  The $(1,2)$-block of $M$ has rows indexed by edges $e=r_i c_j$, columns indexed by edges $f=r_{i'} s_{k'}$, and the $(e,f)$-entry is $1$ if and only if $i=i'$.  This exactly recovers the condition for $e$ and $f$ sharing a common tile.  Other blocks of $M$ are similar.  We use $H_{rb}$ to denote the zero-one matrix indexed by rows versus boxes in which $H_{rb}(r_i,b_\ell) = 1$ if and only if $r_i \smile b_\ell$.  We use $H_{cb}$ analogously for columns.  Finally, $H_{rcb}$ is $n^2 \times n$, indexed by row-column edges versus boxes, and $H_{rcb}(r_ic_j,b_\ell)=1$ if and only if box$(i,j)=\ell$.

\begin{figure}[htbp]
\begin{center}
\begin{tikzpicture}
\draw (-3,-3) rectangle (3,3);
\draw[dotted] (-3,-1.5)--(3,-1.5);
\draw[dotted] (-3,0)--(3,-0);
\draw[dotted] (-3,1.5)--(3,1.5);
\draw[dotted] (-1.5,-3)--(-1.5,3);
\draw[dotted] (0,-3)--(0,3);
\draw[dotted] (1.5,-3)--(1.5,3);
\node at (-2.25,2.25) {$nI_r \otimes I_c$};
\node [rotate=315] at (-.75,2.25) {\small $I_r \otimes \mathbf{j}_c \otimes \mathbf{j}_s^\top$};
\node [rotate=315] at (.75,2.25) {\small $\mathbf{j}_r \otimes I_c \otimes \mathbf{j}_s^\top$};
\node at (2.25,2.25) {\small $H_{rcb} \otimes \mathbf{j}_s^\top$};
\node [rotate=315] at (-2.25,.75) {\small $I_r \otimes \mathbf{j}_c^\top \otimes \mathbf{j}_s$};
\node at (-.75,.75) {$nI_r \otimes I_s$};
\node [rotate=315] at (.75,.75) {\small $\mathbf{j}_r \otimes \mathbf{j}_c^\top \otimes I_s$};
\node at (2.25,.75) {\small $wH_{rb} \otimes I_s$};
\node [rotate=315] at (-2.25,-.75) {\small $\mathbf{j}_r^\top \otimes I_c \otimes \mathbf{j}_s$};
\node [rotate=315] at (-.75,-.75) {\small $\mathbf{j}_r^\top \otimes \mathbf{j}_c \otimes I_s$};
\node at (.75,-.75) {$nI_c \otimes I_s$};
\node at (2.25,-.75) {\small $hH_{cb} \otimes I_s$};
\node at (-2.25,-2.25) {\small $H_{rcb}^\top \otimes \mathbf{j}_s$};
\node at (-.75,-2.25) {\small $wH_{rb}^\top \otimes I_s$};
\node at (.75,-2.25) {\small $hH_{cb}^\top \otimes I_s$};
\node at (2.25,-2.25) {$nI_b \otimes I_s$};
\node at (3.5,0) {$=$};
\node at (7,0) {\includegraphics[scale=0.25]{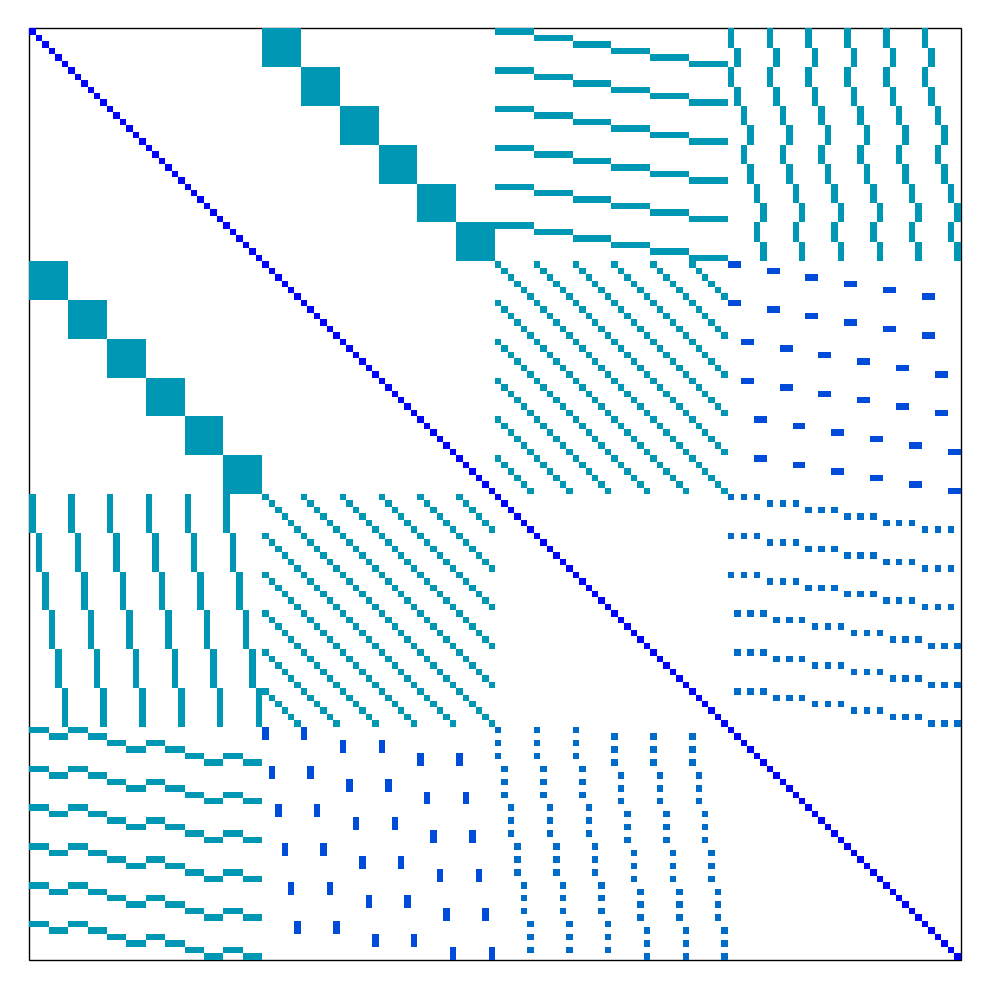}};
\end{tikzpicture}
\caption{Illustration of the block matrix structure of $M$}
\label{fig:M}
\end{center}
\end{figure}

On the right of Figure~\ref{fig:M}, we display the locations of nonzero entries as a graphic, illustrated in the case $h=2$, $w=3$.  The diagonal has entries $n=hw$.  Blocks $(2,4)$ and $(4,2)$ correspond to $A_{46}$ and $A_{50}$, with coefficient $w$.  Blocks $(3,4)$ and $(4,3)$ correspond to $A_{54}$ and $A_{58}$, with coefficient $h$.  The other blocks correspond to the remaining terms with coefficient 1.

\section{Spectral decomposition of $M$}
\label{sec:spectral}

\subsection{Eigenvalues and eigenvectors}
\label{sec:eigenvalues}

Since $M=WW^\top$, we know it is symmetric and hence has real eigenvalues. We also have $\mathrm{rank}(M)=\mathrm{rank}(W)$, so from Section~\ref{sec:kernel}, we know that zero is an eigenvalue of $M$ with multiplicity $3n + (h + w)(n - 1)$.  Moreover, $M$ has constant row sums equal to $4n$, since every edge belongs to $n$ tiles, and every tile has four edges.  This gives an eigenvalue $4n$ corresponding to the one-dimensional eigenspace of constant vectors.  

In this section, we compute all other eigenvalues and corresponding eigenvectors for $M$.  
By \eqref{eq:M}, we know that $M \in \mathfrak{A}$.
Later, a generalized inverse for $M$ is expressed with a list of coefficients in $\mathfrak{A}$.  For the discovery of these coefficients, it is helpful to have a good understanding of the spectral decomposition of $M$. This is summarized here, with more details and verifications for eigenvectors appearing in the remainder of this subsection.

\begin{prop}
\label{prop:Mspec}
The eigenvalues of $M$ are $\theta_j=jn$, $j=0,1,\dots,4$.  Each eigenspace has a basis of eigenvectors consisting of vectors with entries in $\{0,\pm 1\}$.
\end{prop}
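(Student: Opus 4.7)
The plan is to produce an explicit orthogonal decomposition of $\mathbb{R}^{E(G_{hw})}$ into $M$-invariant subspaces on each of which $M$ acts as a small matrix, and then to assemble $\{0,\pm 1\}$-bases of the eigenspaces from this decomposition.

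Two of the five eigenvalues are essentially already at hand. From the constant row sums $4n$ of $M$, the all-ones vector gives $\theta_4 = 4n$ with multiplicity $1$. From Section~\ref{sec:kernel}, the families (A), (B), (C) furnish a $\{0,\pm 1\}$-basis of $\ker M$, accounting for multiplicity $3n + (h+w)(n-1)$ at $\theta_0 = 0$. A further easy piece comes from the four-dimensional invariant subspace spanned by the edge-type indicators $\mathds{1}_{rc}, \mathds{1}_{rs}, \mathds{1}_{cs}, \mathds{1}_{bs}$: a short calculation from \eqref{eq:M} shows that $M$ sends each of these to $n(\mathds{1}_{rc}+\mathds{1}_{rs}+\mathds{1}_{cs}+\mathds{1}_{bs})$, so $M$ restricts here to $nJ_4$, with spectrum $\{4n,0,0,0\}$ and visible $\{0,\pm 1\}$ eigenvectors (the all-ones vector, together with differences like $\mathds{1}_{rc}-\mathds{1}_{rs}$).

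For the remaining eigenvalues $n, 2n, 3n$, I would decompose each partite space into pieces adapted to the bundle structure: $\mathbb{R}^n_{\mathrm{row}} = U_0 \oplus U_1 \oplus U_2$ of dimensions $1, w{-}1, n{-}w$ (constants; bundle-constant with zero bundle-sum; zero-sum within each bundle), and analogously $\mathbb{R}^n_{\mathrm{col}}$ of dimensions $1, h{-}1, n{-}h$; $\mathbb{R}^n_{\mathrm{sym}} = T_0 \oplus T_1$ of dimensions $1, n{-}1$; and $\mathbb{R}^n_{\mathrm{box}} = X_0 \oplus X_1 \oplus X_2 \oplus X_3$ of dimensions $1, w{-}1, h{-}1, (w{-}1)(h{-}1)$ adapted to the two box-equivalences $\smile$ and $\frown$. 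The edge space then splits as a direct sum of tensor products of these pieces over the two partite labels of each edge type. Using the block-Kronecker description of $M$ in Figure~\ref{fig:M}, one checks that $M$ preserves each such isotypic component, with cross-talk between different edge types only happening between matched pieces on the shared vertex type. Within each invariant component, $M$ then reduces to a matrix of order at most $4$ whose spectrum is easily computed symbolically and always lands in $\{0,n,2n,3n,4n\}$; tensor products of indicator-difference vectors from each partite piece then furnish eigenvectors with $\{0,\pm 1\}$ entries.

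The main obstacle is the routine but delicate bookkeeping over the isotypic components: enumerating them, computing the reduced small matrices, and tallying multiplicities. A concluding sanity check uses \eqref{eq:nullity} and the ambient dimension $4n^2$ to confirm that every eigenvector has been accounted for, and that the $\{0,\pm 1\}$-bases chosen for the nontrivial eigenspaces cohere across components.
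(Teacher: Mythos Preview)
Your plan is sound and is, in essence, the same route the paper takes; the Kronecker-product table of eigenvectors in Section~\ref{sec:eigenvalues} is precisely the output of your isotypic decomposition, with the building blocks $\vj,\ve,\vd,\vd^\sim,\vd^\dagger,\vd^\Box$ playing the role of bases for your pieces $U_i,T_i,X_i$. The difference is one of presentation rather than method: the paper proceeds eigenvalue by eigenvalue, exhibiting each variety of eigenvector directly and verifying $M\vv=\theta_j\vv$ by the combinatorial interpretation ``sum $\vv$ over the four edges of every tile through $f$,'' whereas you would first block-diagonalise via the tensor decomposition and then read off the spectra of the small blocks. Your framing makes clearer why the spectrum must lie in $\{0,n,2n,3n,4n\}$, while the paper's explicit list makes the $\{0,\pm1\}$ entries immediately visible and sidesteps the bookkeeping you flag as the main obstacle. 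One caution: the $\{0,\pm1\}$ claim does not follow automatically from the small blocks having size at most~$4$; you still need that each such block has $\{0,\pm1\}$ eigenvectors, which in the paper's treatment is established by exhibiting them rather than by a general argument.
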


We have discussed $\theta_0=0$ and $\theta_4=4n$ earlier, so we turn our focus to $\theta_1,\theta_2,\theta_3$.  Below, we describe different varieties of eigenvectors (A), (B), etc., for each of these $\theta_j$.  A basis for each eigenspace can be found by taking a union of linearly independent vectors over the different varieties.  Making a selection of linearly independent vectors of the indicated size within each variety can be done using relations as in Section~\ref{sec:kernel}.  More details can be found in \cite{Kate}.

We give an informal description and brief verification for each eigenvector. Checking that $M \vv = \theta_j \vv$ can be done as follows.  Take each edge $f \in E(G_{hw})$ and extend to a tile $t \supset f$ in all possible ways.  Then, sum the values of $\vv$ on the four edges of $t$, and check that this total equals $\theta_j \vv(f)$.  This often equals zero, either from canceling signs or when the support of $\vv$ is disjoint from the relevant tiles $t$.  Figures~\ref{fig:evecs1}, \ref{fig:evecs2} and \ref{fig:evecs3} give diagrams illustrating the eigenvector varieties in the case $(h,w)=(2,3)$.  In these diagrams, the four sections correspond to the four edge types: row-column (top left), row-symbol (top right), symbol-column (bottom left), and box-symbol (bottom right).  Symbols $+$ and $-$ denote vector entries $1$ and $-1$, respectively, and blanks represent $0$ in the corresponding positions.  

$\bullet$ $\theta_1=n$; eigenspace dimension $4n^2-(2n-3)(h+w)-5n-1$

\begin{figure}[htbp]
\begin{center}
\begin{tikzpicture}
\node at (-0.6,1.2) {(A)};
\rcscope{0/0,3/1}{3/0,0/1};
\rsscope{}{};
\csscope{}{};
\bsscope{}{};
\end{tikzpicture}
\hspace{5mm}
\begin{tikzpicture}
\node at (-0.6,1.2) {(B)};
\rcscope{}{};
\rsscope{0/0,1/1}{0/1,1/0};
\csscope{}{};
\bsscope{}{};
\end{tikzpicture}
\hspace{5mm}
\begin{tikzpicture}
\node at (-0.6,1.2) {(C)};
\rcscope{}{};
\rsscope{}{};
\csscope{}{};
\bsscope{0/0,1/1,1/2,0/3}{1/0,0/1,0/2,1/3};
\end{tikzpicture}
\caption{Eigenvectors for $\theta_1=n$}
\label{fig:evecs1}
\end{center}
\end{figure}
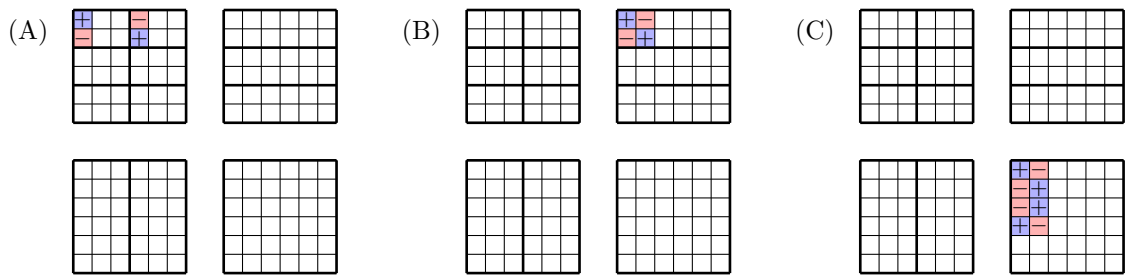

(A) Opposite signs on two distinct rows and two distinct columns, at least one pair of which is in a common bundle.  There are $(n-1)^2-(h-1)(w-1)$ linearly independent vectors of this kind. 

If $r_{i_0},r_{i_1}$ are the rows and $c_{j_0},c_{j_1}$ are the columns, then the entries are given explicitly by $\vv(r_ic_j) = (-1)^{\alpha+\beta}$ if $(i,j)=(i_\alpha,j_\beta)$, and otherwise $\vv(e)=0$.
For an edge $f=r_ic_j$, we have $n$ tiles extending $f$ corresponding to a choice of symbol $s_k$.  Each such tile contains at most one non-vanishing edge, namely that corresponding to $f$.  So $M\vv(f)=n\vv(f)$.
For $f$ of type row-symbol, column-symbol or box-symbol, we have $M\vv(f)=0$, either from cancellation or disjoint supports.  Importantly, having either $r_{i_0} \sim r_{i_1}$ or $c_{j_0} \sim c_{j_1}$ ensures cancellation within each box.

(B) Opposite signs on two distinct rows (or columns) in the same bundle and on two distinct symbols.  There are $(n-1)(h(w-1)+w(h-1))$ linearly independent vectors of this kind. 

If $r_{i_0},r_{i_1}$ are the rows and $s_{k_0},s_{k_1}$ are the symbols, then the entries are given explicitly by $\vv(r_is_k) = (-1)^{\alpha+\gamma}$ if $(i,k)=(i_\alpha,k_\gamma)$, and otherwise $\vv(e)=0$.  Verification that $M\vv=n\vv$ is similar to (A).

(C) Alternating signs on a rectangle of boxes and opposite signs on two distinct symbols.  There are $(n-1)(h-1)(w-1)$ linearly independent vectors of this kind.

Suppose $\ell_{\alpha\beta}$ are the four box indices, where $\alpha,\beta \in \{0,1\}$ tell us the chosen row/column bundles, respectively.  As in (B), let $k_\gamma$ be the chosen symbol indices, $\gamma \in \{0,1\}$.  The entries of the eigenvector are given by $\vv(s_k b_\ell) = (-1)^{\alpha+\beta+\gamma}$ when $(k,\ell) = (k_\gamma,\ell_{\alpha\beta})$, and otherwise $\vv(e)=0$.  Cancellation occurs if we sum over rows, columns, or symbols.  For a symbol-box edge $f=s_k b_\ell$, the $n$ tiles extending $f$ correspond to a choice of entry in box $\ell$. This picks up the value of $\vv(f)$ with a multiplicity of $n$. So $M\vv=n \vv$.

$\bullet$  $\theta_2=2n$; eigenspace dimension $(2n-h-w)+(n-1)(h+w-2)+(h-1)(w-1)=(n-3)(h+w-1)+2n$.

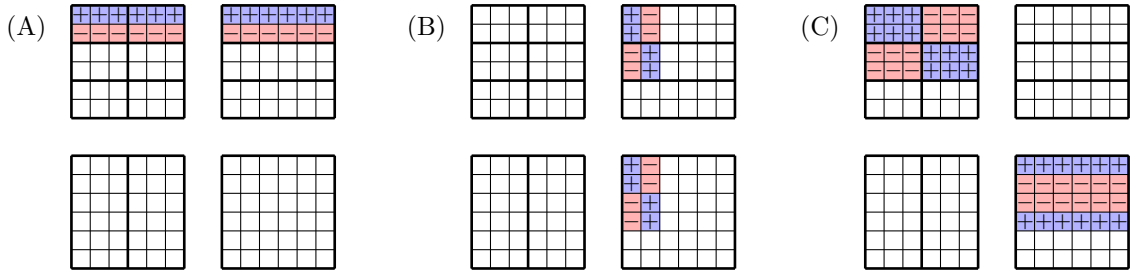
\begin{figure}[htbp]
\begin{center}
\begin{tikzpicture}
\node at (-0.6,1.2) {(A)};
\rcscope{0/0,1/0,2/0,3/0,4/0,5/0}{0/1,1/1,2/1,3/1,4/1,5/1};
\rsscope{0/0,1/0,2/0,3/0,4/0,5/0}{0/1,1/1,2/1,3/1,4/1,5/1};
\csscope{}{};
\bsscope{}{};
\end{tikzpicture}
\hspace{5mm}
\begin{tikzpicture}
\node at (-0.6,1.2) {(B)};
\rcscope{}{};
\rsscope{0/0,0/1,1/2,1/3}{1/0,1/1,0/2,0/3};
\csscope{}{};
\bsscope{0/0,0/1,1/2,1/3}{1/0,1/1,0/2,0/3};
\end{tikzpicture}
\hspace{5mm}
\begin{tikzpicture}
\node at (-0.6,1.2) {(C)};
\rcscope{0/0,1/0,2/0,0/1,1/1,2/1,3/2,4/2,5/2,3/3,4/3,5/3}{0/2,1/2,2/2,0/3,1/3,2/3,3/0,4/0,5/0,3/1,4/1,5/1};
\rsscope{}{};
\csscope{}{};
\bsscope{0/0,1/0,2/0,3/0,4/0,5/0,0/3,1/3,2/3,3/3,4/3,5/3}
{0/1,1/1,2/1,3/1,4/1,5/1,0/2,1/2,2/2,3/2,4/2,5/2};
\end{tikzpicture}
\caption{Eigenvectors for $\theta_2=2n$}
\label{fig:evecs2}
\end{center}
\end{figure}

(A) Opposite signs on two distinct rows in the same bundle; constant on all columns and symbols.
A similar variety exists with rows and columns swapped.  There are $h(w-1)+w(h-1)=2n-h-w$ linearly independent vectors of this kind. 

If $r_{i_0},r_{i_1}$ are the rows, then the eigenvector entries are $\vv(r_i c_j) = \vv(r_is_k) = (-1)^{\alpha}$ when $i=i_\alpha$, and otherwise $\vv(e)=0$ for all other edges.  An edge $f=r_i c_j$ or $r_i s_k$ has exactly $n$ extensions to a tile, each of which has two edges of a common sign.  So $M\vv(f)=2n \vv(f)$ in those cases.  It is easy to see that $M\vv(f)=0$ on all other edges due to cancellation on rows.

(B) Opposite signs on both rows and boxes of two distinct row bundles; opposite signs on symbols.  A similar variety exists with rows and columns swapped. 
There are $(n-1)(h+w-2)$ linearly independent vectors of this kind.

If $f$ is a row-column edge, the cancellation on symbols gives $M\vv(f)=0$.  Likewise, if $f$ is a column-symbol edge, the cancellation on rows gives $M\vv(f)=0$.  For $f$ of either of the other two edge types, there are $n$ extensions to a tile, and again the nonzero edges (if any) agree in sign.

(C) Alternating signs on a rectangle of boxes; constant on all symbols and on entries within each box.
There are $(h-1)(w-1)=n-h-w+1$ linearly independent vectors of this kind.

For row-symbol or column-symbol edges, the extension to a tile leads to cancellation.  For a row-column edge $f$, there
$n$ extensions to a tile by selecting a symbol, and each has two matching edges from the entry and box.  So $M\vv(f)=2n\vv(f)$.  Similarly, for a box-symbol edge $f$, we have $M\vv(f)=2n\vv(f)$.

$\bullet$ $\theta_3=3n$; eigenspace dimension $n+h+w-3$

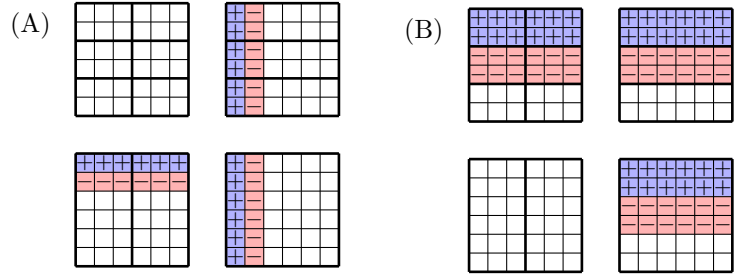
\begin{figure}[htbp]
\begin{center}
\begin{tikzpicture}
\node at (-0.6,1.2) {(A)};
\rcscope{}{};
\rsscope{0/0,0/1,0/2,0/3,0/4,0/5}{1/0,1/1,1/2,1/3,1/4,1/5};
\csscope{0/0,1/0,2/0,3/0,4/0,5/0}{0/1,1/1,2/1,3/1,4/1,5/1};
\bsscope{0/0,0/1,0/2,0/3,0/4,0/5}{1/0,1/1,1/2,1/3,1/4,1/5};
\end{tikzpicture}
\hspace{5mm}
\begin{tikzpicture}
\node at (-0.6,1.2) {(B)};
\begin{scope}[scale=0.25]
\foreach \a in {0,1,...,5}
 \foreach \b in {0,1}
  \posbox{\a}{\b};
\foreach \a in {0,1,...,5}
 \foreach \b in {2,3}
  \negbox{\a}{\b};
\framesix
\draw[line width=1pt] (3,0)--(3,6);
\draw[line width=1pt] (0,2)--(6,2);
\draw[line width=1pt] (0,4)--(6,4);
\end{scope}
\begin{scope}[xshift=2cm,scale=0.25]
\foreach \a in {0,1,...,5}
 \foreach \b in {0,1}
  \posbox{\a}{\b};
\foreach \a in {0,1,...,5}
 \foreach \b in {2,3}
  \negbox{\a}{\b};
\framesix
\draw[line width=1pt] (0,2)--(6,2);
\draw[line width=1pt] (0,4)--(6,4);
\end{scope}
\begin{scope}[yshift=-2cm,scale=0.25]
\framesix
\draw[line width=1pt] (3,0)--(3,6);
\end{scope}
\begin{scope}[xshift=2cm,yshift=-2cm,scale=0.25]
\foreach \a in {0,1,...,5}
 \foreach \b in {0,1}
  \posbox{\a}{\b};
\foreach \a in {0,1,...,5}
 \foreach \b in {2,3}
  \negbox{\a}{\b};
\framesix
\end{scope}
\end{tikzpicture}
\caption{Eigenvectors for $\theta_3=3n$}
\label{fig:evecs3}
\end{center}
\end{figure}

(A) Opposite signs on two distinct symbols; constant on all rows, columns, and boxes.
There are $n-1$ linearly independent vectors of this kind. 

If $k_0,k_1$ are the two symbols, then the eigenvector entries are $\vv(r_i s_k) = \vv(c_js_k) = \vv(b_\ell s_k) = (-1)^{\gamma}$ when $k=k_\gamma$, and otherwise $\vv(e)=0$ for all other edges.  If $f$ is any edge involving a symbol $s_{k_\gamma}$, the $n$ tiles extending $f$ each have (if any) three nonzero edges of matching sign.  So $M\vv(f)=3n\vv(f)$.  In other cases, it is easy to see that $M\vv(f)=0$ by cancellation.

(B) Opposite signs on both rows and boxes of two distinct row bundles; constant on all columns and symbols.  A similar variety exists with rows and columns swapped.  There are $(h-1)+(w-1)$ linearly independent vectors of this kind. 

The verification here is similar to (A), except that row bundles take the role of symbols.

\subsection*{Kronecker product}

As with our matrix $M$, it is possible to write the eigenvectors, including 
the kernel vectors from Section~\ref{sec:kernel}, using $\otimes$. 
Before doing so, we set up some ingredient vectors and conventions of notation.  Let $\vj_{r}$ denote the $n \times 1$ all-ones vector indexed by rows, and $\vj_c$, $\vj_s$, $\vj_b$ similar for columns, symbols and boxes, respectively.  Let $\ve_{r_i}$ denote the $n \times 1$ indicator vector of row $i$.  We omit the second subscript and write $\ve_r$ if $i$ is unimportant or clear from context.  Any difference $\ve_{r_i}-\ve_{r_{i'}}$ where $i \neq i'$ is denoted $\vd_r$.  Similar vectors are defined for columns, symbols, and boxes, with letters $c,s,b$ used accordingly.

We also need a few minor variants of these.  A difference $\ve_{r_i}-\ve_{r_{i'}}$ where $r_i \sneq  r_{i'}$ is denoted $\vd_r^\sim$; an analogous vector $\vd_c^\sim$ is defined for columns in the same bundle.  If $\ell_{00},\ell_{01},\ell_{10},\ell_{11}$ are box indices forming a rectangle, the alternating sum of their indicator vectors is denoted $\vd_b^\Box$.  Given two distinct row bundles, say $B_0$ and $B_1$, a combination $\sum_{r \in B_0} \ve_r - \sum_{r \in B_1}  \ve_r$ is denoted 
$\vd_r^\dagger$.  Similar vectors can be defined for columns and boxes.  We remark that all vectors involving the letter `$\vd$' have a sum of coordinates equal to zero.

As an important convention, when we take a Kronecker product of vectors from the above, it is regarded as a $4n^2 \times 1$ vector with zeros inserted as needed.  For instance, $\vj_r \otimes \vj_c$ has entries equal to $1$ supported on all row-column edges, and is assumed to vanish on edges of all other types (even though these do not appear in the product).  Additionally, choices are assumed to be canonical and consistent within a vector.  For instance, if $\vd_s$ appears twice, it is assumed to represent the same symbol indices; if $\vd_r^\dagger$ and $\vd_b^\dagger$ appear together, it is assumed the row bundle and box bundle coincide.  

\begin{table}[htbp]
\begin{tabular}{|c|c|r|l}
\cline{1-3}
eigenvalue & variety & eigenvector \\
\cline{1-3}
$0$ & (A) &  $\ve_r \otimes \vj_c - \ve_r \otimes \vj_s$ & $*$\\
 & (B) & $H_{rcb} \ve_b - \vj \otimes \ve_b$ \\
 & (C) & $\vd_r^\dagger \otimes \ve_s - \vd_b^\dagger \otimes \ve_s$  & $*$\\
\cline{1-3}
$n$ & (A) & 
$\vd_r^\sim \otimes \vd_c$ &  $*$\\
 & (B) & $\vd_r^\sim \otimes \vd_s$ &  $*$\\
 & (C) & $\vd_s \otimes \vd_b^\Box$\\
\cline{1-3}
$2n$ & (A) & $\vd_r^\sim \otimes \vj_c + \vd_r^\sim \otimes \vj_s$ & $*$\\
 & (B) & $\vd_r^\dagger \otimes \vd_s +\vd_b^\dagger \otimes \vd_s$ & $*$\\
 & (C) & $\vj_s \otimes \vd_b^\Box + H_{rcb} \vd_b^\Box$\\
\cline{1-3}
$3n$ & (A) & $\vd_s \otimes \vj_r + \vd_s \otimes \vj_c + \vd_s \otimes \vj_b$\\
 & (B) & $\vd_r^\dagger \otimes \vj_c + \vd_r^\dagger \otimes \vj_s + \vd_b^\dagger \otimes \vj_s$ &  $*$\\
\cline{1-3}
$4n$ &  & $\vj_r \otimes \vj_c + \vj_r \otimes \vj_s + \vj_c \otimes \vj_s + \vj_s \otimes \vj_b$ \\
\cline{1-3}
\end{tabular}

\caption{Eigenvectors of $M$; $*$ means minor variants exist}
\label{tab:evecs-Kronecker}
\end{table}
Although it does not convey much combinatorial understanding, eigenvectors in this form can in principle be checked using block matrix multiplication.  As an example, consider $\vv=\vd_r^\sim \otimes \vd_c$, a type (A) eigenvector for $\theta_1=n$.
The four blocks of the product $M\vv$ can be computed one at a time:
\begin{align*}
(nI_r \otimes I_c)(\vd_r^\sim \otimes \vd_c) &= n \vd_r^\sim \otimes \vd_c\\
(I_r \otimes J_{sc})(\vd_r^\sim \otimes \vd_c) &= \vd_r^\sim \otimes \mathbf{0} = \mathbf{0}\\
(J_{sr} \otimes I_c)(\vd_r^\sim \otimes \vd_c) &= \mathbf{0} \otimes \vd_c^\sim = \mathbf{0}\\
(H_{rcb}^\top \otimes \vj_{s})(\vd_r^\sim \otimes \vd_c) &= H_{rcb}^\top (\vd_r^\sim \otimes \vd_c) \otimes \vj_s = \mathbf{0} \otimes \vj_s = \mathbf{0},
\end{align*}
where the latter is because every box intersects both or neither of the row indices defining $\vd_r^\sim$.  It follows that $M\vv=n \vv$.

We next consider in more detail the projectors onto the eigenspaces of $M$. 

\subsection{Projectors and a generalized inverse for $M$}
\label{sec:projectors}

Since $M$ is symmetric, the projectors $E_j$ onto eigenspaces for $\theta_j$ are pairwise orthogonal idempotents summing to $I$.
Moreover, we have $E_j \in \mathfrak{A}$ for each $j$ as a general fact of coherent configurations; see for instance \cite{Higman}.  

\begin{figure}[htbp]
\begin{center}
\includegraphics[scale=0.25]{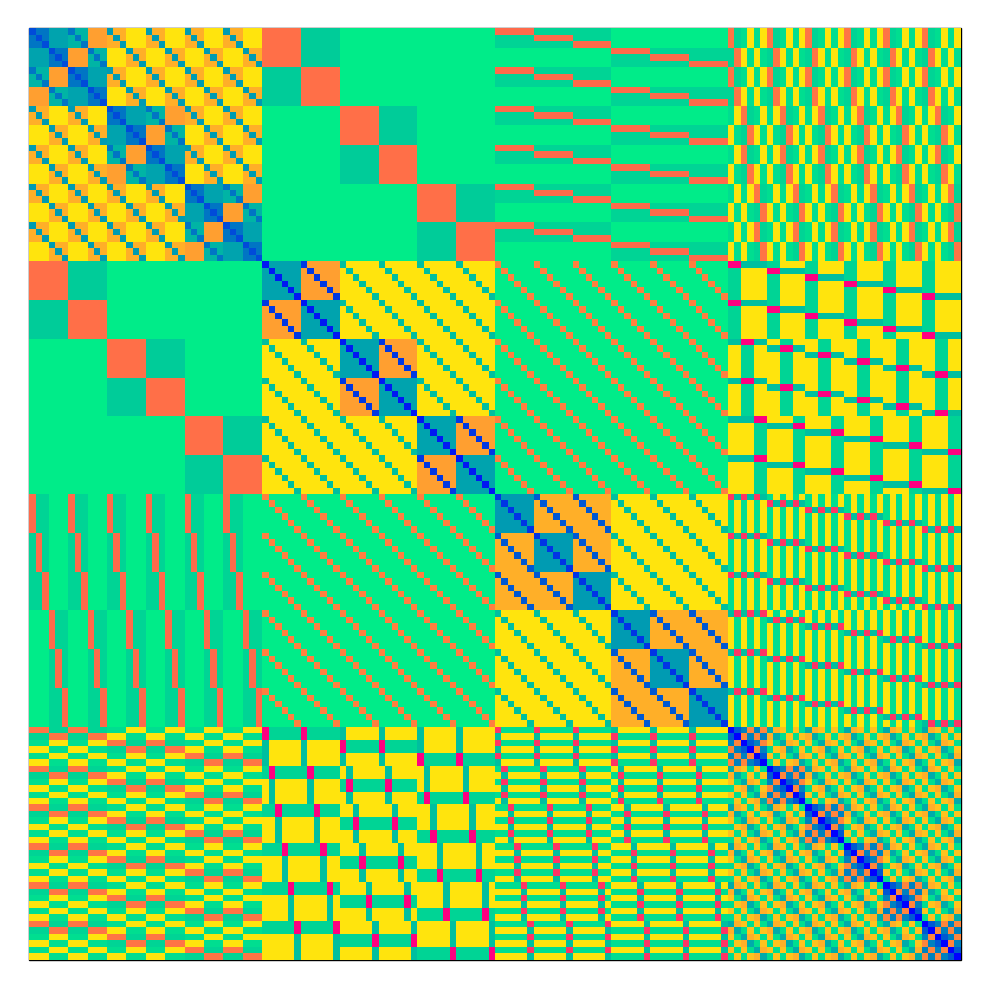}
~\includegraphics[scale=0.25]{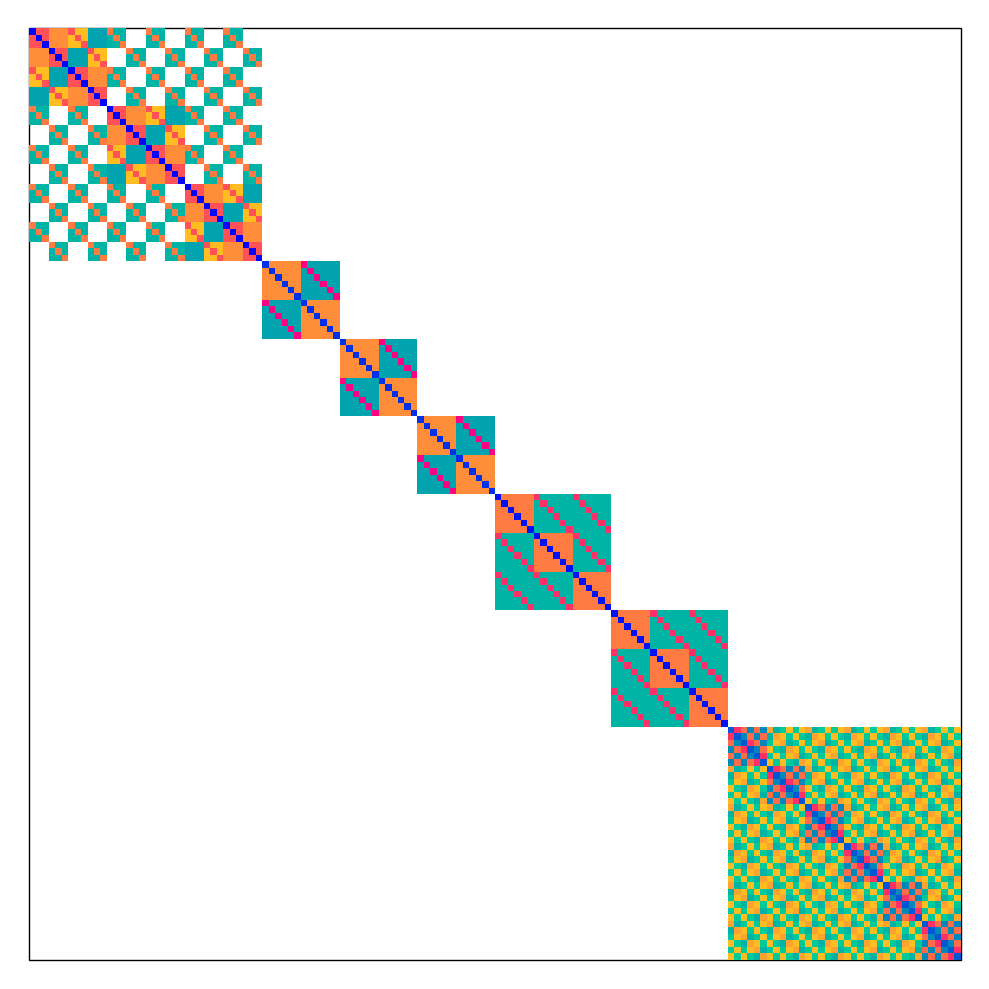}

\includegraphics[scale=0.25]{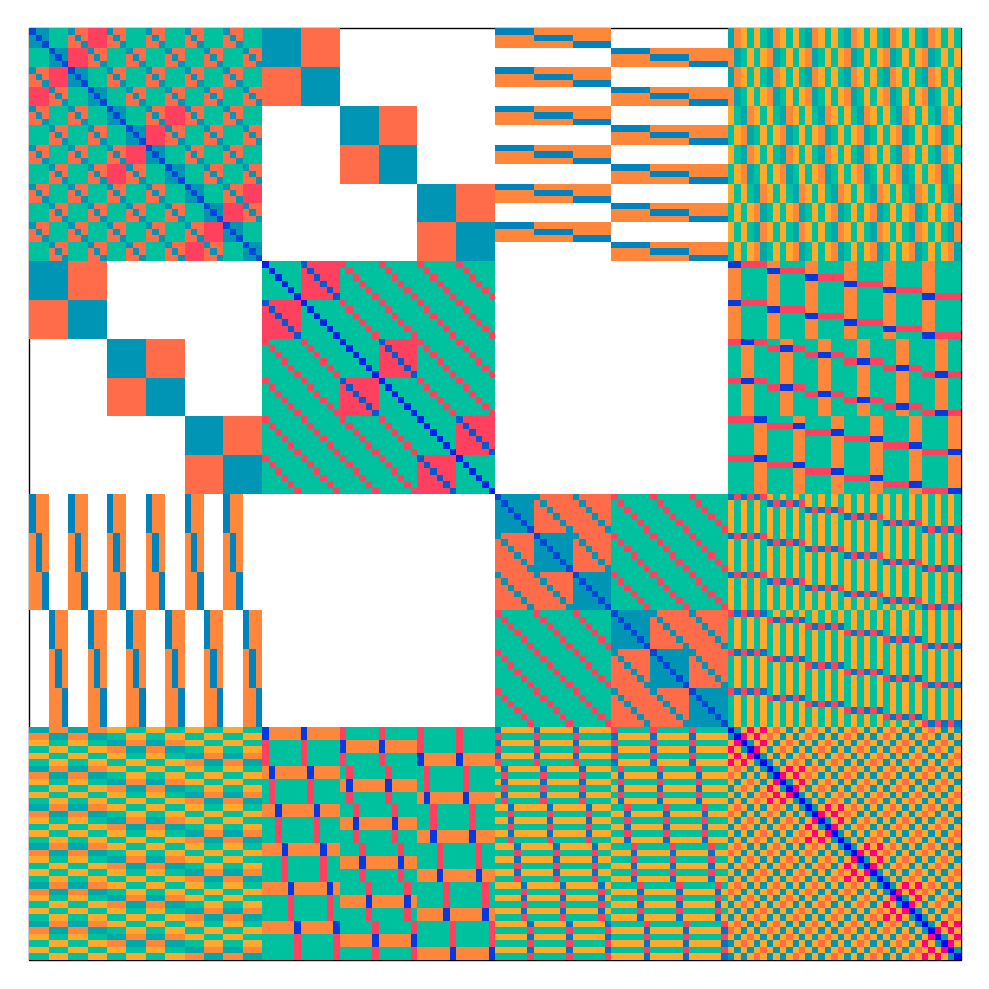}
~\includegraphics[scale=0.25]{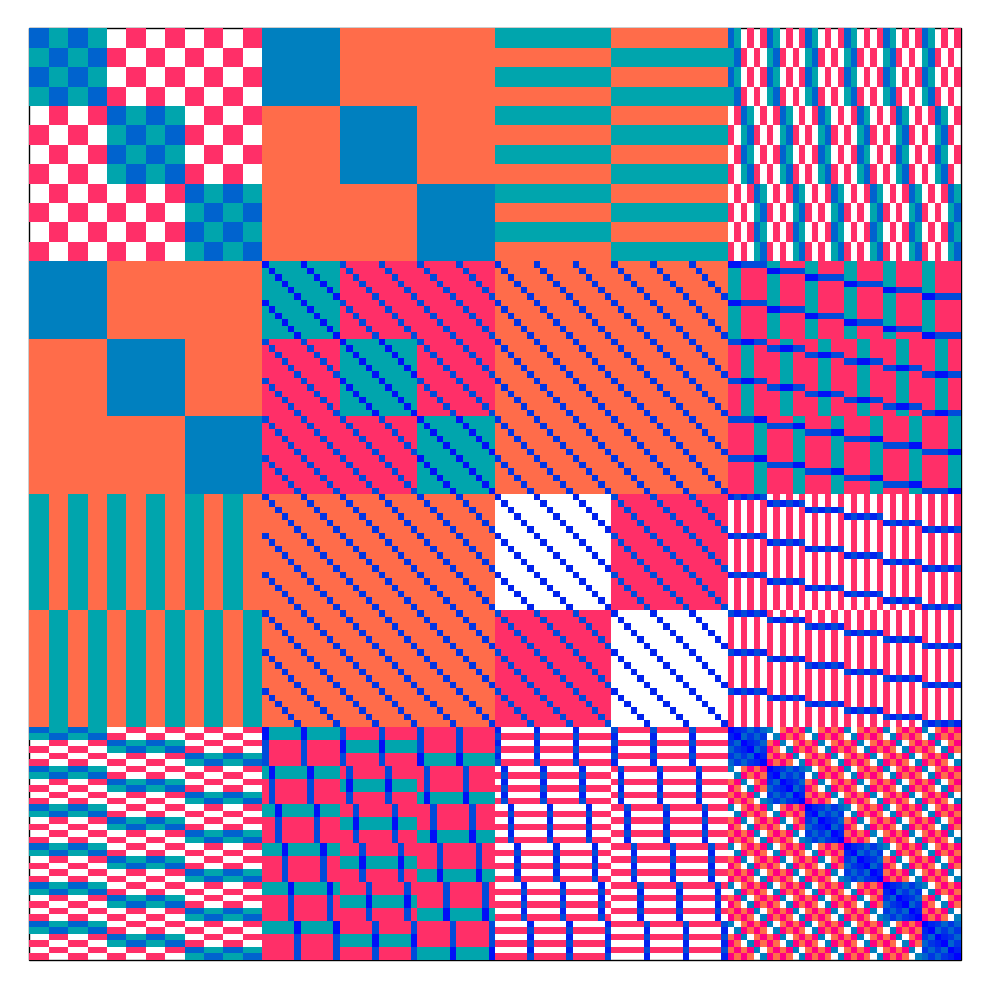}
\caption{Structure of entries of $E_0,E_1$ (top), $E_2,E_3$ (bottom) for $(h,w)=(2,3)$}
\label{fig:E0E1E2E3}
\end{center}
\end{figure}

The projectors can be computed as $E_i=V_i(V_i^\top V_i)^{-1}V_i^\top$, where $V_i$ is a matrix whose columns are a basis of eigenvectors for $\theta_i$.  As a special case, since $V_4$ is the all-ones vector, we have $E_4=\frac{1}{4n^2} J$.  The structure of entries for each of the other projectors is shown in Figure~\ref{fig:E0E1E2E3}.  Intensity of blue/red correspond respectively to extreme positive/negative entries, while shades of green/yellow correspond to positive/negative entries which are smaller in magnitude.

Knowing the eigenvalues and eigenspace projectors for $M$ can be used to compute a generalized inverse $M^+$ satisfying $MM^+M=M$. We explain this computation in the rest of this section.

The spectral decomposition of $M$ is given by $M=nE_1+2nE_2+3nE_3+4nE_4$. 
In what follows, $E_0$ will also be denoted $K$, since it projects onto the kernel of $M$.  Although $M$ itself is not invertible, if we take $\eta \neq 0$, say $\eta = n/x$, we can invert the additive shift $A=M+\eta K$ as
\begin{equation}
\label{eq:A-inv}
A^{-1} = \frac{1}{n} \left( x K +\sum_{j=1}^4 \frac{1}{j} E_j \right).
\end{equation}
This formula results from the $E_i$ being orthogonal idempotents with $E_0+E_1+\dots+E_4=I$.  Later on, to solve our linear system for fractional Sudoku, we make use of a generalized inverse $M^+$ of the form in \eqref{eq:A-inv}. It turns out that $x=3/2$, or $\eta=2n/3$, is a nice choice.  A discussion of this choice is given in the next subsection.

With some computer-assisted algebra, we found coefficients to express $A^{-1}$ in the basis $\{A_i:i=1,\dots,69\}$ for the adjacency algebra $\mathfrak{A}$. These are expressed in Table~\ref{tab:Ainv-coeffs}.  For convenience, we have cleared a denominator of $9n^3$ and then applied an additive shift of $5/16$.
Using our Sage \cite{Sage} worksheet at \url{https://github.com/pbd345/sudoku},
the interested reader can compute various
symbolic products in $\mathfrak{A}$, including a verification that Table~\ref{tab:Ainv-coeffs} does indeed give an inverse of $A$.
\begin{table}[htbp]
$$
\begin{array}{|l|r|l|r|}
\hline
\text{relations} & \text{coeffs} & \text{relations} & \text{coeffs} \\
\hline
1	&	 9n^2 + h + w	&	32	&	 9n^2 + n + h\\
2, 4, 5 	&	 h + w	&	34	&	 n + h\\
3, 6, 17, 19, 39, 43, 47, 51 	&	 w	&	38, 42 	&	 -9n/2 + h + w\\
7, 8, 33, 35, 40, 44, 55, 59 	&	 h	&	46, 50 	&	 -9nw/2 + n + w\\
10, 13 	&	 -9n/2 + w + 1	&	54, 58 	&	 -9nh/2 + n + h\\
11, 14 	&	 w + 1	&	62	&	 9n^2 + n + h + w - 1\\
12, 15, 24, 27, 29, 31 	&	1	&	63	&	 h + w - 1\\
16	&	 9n^2 + n + w	&	64	&	 n + w - 1\\
18	&	 n + w	&	65	&	 w - 1\\
20, 36, 48, 52, 56, 60 	&	 n	&	66	&	 n + h - 1\\
22, 25 	&	 -9n/2 + h + 1	&	67	&	 h - 1\\
23, 26 	&	 h + 1	&	68	&	 n - 1\\
28, 30 	&	 -7n/2 + 1	&	69	&	 -1\\
\hline
\end{array}$$
\caption{Coefficients of $9n^3A^{-1}+\frac{5}{16}J$}
\label{tab:Ainv-coeffs}
\end{table}

\subsection{Norm bounds}
\label{sec:inverse}

We work with the $\infty$-norm of vectors $\| \mathbf{x} \|_\infty = \max \{|x_1|,\dots,|x_n|\}$ and the induced norm on matrices
$$\| A \|_\infty = \max_i \sum_j |A_{ij}|.$$
It is straightforward to obtain a bound on the $\infty$-norm of \eqref{eq:A-inv} using the values in Tables~\ref{tab:degrees} and \ref{tab:Ainv-coeffs}.  The triangle inequality gives a crude bound of order $O(n^{-1})$, but we can get an exact value with the help of a computer. First, we store the coefficients of the projectors relative to our coherent configuration basis and make a note of their signs.  For each of the four sections corresponding to the edge types, we sum the absolute values of projector coefficients times the section row sums.  When we combine these as in 
\eqref{eq:A-inv}, the result is a list of three piecewise linear functions (one duplicate occurs for two sections), each multiplied by $n^{-1}$.  These functions are

\vspace{-11pt}
\begin{align*}
f_1(x)&=
3|\tfrac{x}{2} - \tfrac{3}{4}|+ 4|\tfrac{x}{6} - \tfrac{5}{36}|+ 2|\tfrac{x}{12} - \tfrac{7}{144}|+ 2|\tfrac{x}{12} - \tfrac{13}{144}|+ 2|\tfrac{x}{3} - \tfrac{11}{18}|+ 3|\tfrac{x}{2} - \tfrac{1}{4}|+ 1,\\
f_2(x)&=
2|\tfrac{x}{2} - \tfrac{3}{4}|+ 4|\tfrac{x}{6} - \tfrac{5}{36}|+ 2|\tfrac{x}{12} - \tfrac{7}{144}|+ 2|\tfrac{x}{12} - \tfrac{13}{144}|+ |\tfrac{x}{3} - \tfrac{11}{18}|+ |\tfrac{x}{3} - \tfrac{1}{9}|+ 2|\tfrac{x}{2} - \tfrac{1}{4}|+ 1,\\
f_3(x)&=
3|\tfrac{x}{2} - \tfrac{3}{4}|+ |\tfrac{x}{4} - \tfrac{25}{48}|+ 6|\tfrac{x}{6} - \tfrac{5}{36}|+ 3|\tfrac{x}{12} - \tfrac{13}{144}|+ 3|\tfrac{x}{3} - \tfrac{11}{18}|+ 3|\tfrac{x}{2} - \tfrac{1}{4}|+ 1.
\end{align*}
Graphs for the functions $f_i(x)$ are shown in Figure~\ref{fig:norm-pieces}.  It turns out that $\max\{f_i(x):i=1,2,3\}$ is minimized at $x=3/2$, yielding the dominant term $15/4n$, also an upper bound for all $h,w \ge 2$.
The results of this computation are summarized in the lemma below.

\begin{lemma}
\label{lem:A-inv-bound}
Let $A=M+\tfrac{2n}{3}K$.  Then
$$\|A^{-1}\|_\infty =
\frac{15}{4n} -\frac{7(h+w)}{8n^2}-\frac{4}{9n^2}+\frac{31(h+w)-21}{72n^3}
< \frac{15}{4n}.$$
\end{lemma}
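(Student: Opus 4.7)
The proof plan rests on the fact that $A^{-1}$ lies in the adjacency algebra $\mathfrak{A}$ of our coherent configuration, so its entries are constant on each relation $R_i$. Writing $A^{-1} = \sum_{i=1}^{69} c_i A_i$, for any edge $e$ the absolute row sum $\sum_f |A^{-1}(e,f)|$ depends only on the type of $e$: it equals $\sum_i |c_i|\, d_i$, where the sum runs over relations whose source type matches that of $e$ and $d_i$ is the corresponding degree from Table~\ref{tab:degrees}. Since there are only four edge types (row-column, row-symbol, column-symbol, box-symbol), $\|A^{-1}\|_\infty$ is the maximum of at most four explicit expressions in $h$, $w$, $n$.

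The first step is to obtain the coefficients $c_i$ of $A^{-1}$ in the basis $\{A_1,\dots,A_{69}\}$. Using the spectral formula \eqref{eq:A-inv} at $x=3/2$,
\[
A^{-1} = \frac{1}{n}\Bigl(\tfrac{3}{2} K + E_1 + \tfrac{1}{2}E_2 + \tfrac{1}{3}E_3 + \tfrac{1}{4}E_4\Bigr),
\]
and the fact that each projector $E_j$ lies in $\mathfrak{A}$ (computable from the eigenvector bases of Section~\ref{sec:eigenvalues} via $E_j = V_j(V_j^\top V_j)^{-1}V_j^\top$), one obtains the $c_i$ explicitly as polynomials in $h,w,n$. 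Table~\ref{tab:Ainv-coeffs} already records these values up to the global factor $9n^3$ and the additive shift by $\tfrac{5}{16}J$; for the purpose of computing $\|A^{-1}\|_\infty$ I would undo both before taking absolute values.

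The second step is to determine the sign of each $c_i$ (as a polynomial in $h,w$, for $h,w\ge 2$ the signs stabilize), and then to compute the four type-sums $\sum_i |c_i|\, d_i$ as symbolic polynomials in $h,w$. This is a finite symbolic computation over $\mathbb{Z}[h,w,n^{-1}]$, carried out using the author's Sage worksheet. I would then verify that the maximum of these four expressions is exactly
\[
\tfrac{15}{4n} - \tfrac{7(h+w)}{8n^2} - \tfrac{4}{9n^2} + \tfrac{31(h+w)-21}{72n^3},
\]
consistent with the leading coefficient $\tfrac{15}{4}$ obtained by evaluating at $x=3/2$ the max of the piecewise-linear functions $f_1,f_2,f_3$ of Figure~\ref{fig:norm-pieces}. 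The strict inequality $<\tfrac{15}{4n}$ is then immediate: clearing $72n^3$, it reduces to $63n(h+w) + 32n + 21 > 31(h+w)$, which holds trivially for $h,w \ge 2$ since $n \ge 4$.

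The main obstacle is the bookkeeping across 69 relations and the sign determination for the $c_i$ as polynomials in $h,w$; the signs of a small number of coefficients are the only genuinely delicate step, since most terms are either manifestly of one sign or are linear combinations whose sign is forced for $h,w \ge 2$. As in the paper's earlier computations, I would treat this step computer-symbolically and then match the resulting expression term-by-term against the claimed closed form.
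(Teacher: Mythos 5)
Your proposal is correct and follows essentially the same route as the paper: express $A^{-1}$ in the adjacency-algebra basis, compute the absolute row sum $\sum_i |c_i|\,d_i$ for each of the four edge types using the degrees of Table~\ref{tab:degrees} and the coefficients underlying Table~\ref{tab:Ainv-coeffs}, and take the maximum, with the sign bookkeeping done computer-symbolically. The only cosmetic difference is that the paper carries the shift parameter $x=n/\eta$ symbolically through the computation (producing the piecewise-linear functions $f_1,f_2,f_3$) and only then specializes to $x=3/2$, whereas you fix $x=3/2$ at the outset; your explicit verification of the strict inequality is a small addition the paper leaves implicit.
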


\begin{figure}
\begin{center}
\begin{tikzpicture}
\node at (0,0) {\includegraphics[scale=0.5]{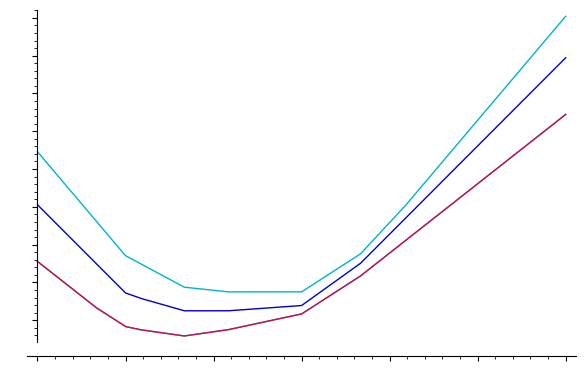}};
\node at (-1,-2.5) {$1$};
\node at (1.3,-2.5) {$2$};
\node at (3.6,-2.5) {$3$};
\node at (4.1,-2.2) {$x$};
\node at (3.9,2.35) {$f_3$};
\node at (3.9,1.75) {$f_2$};
\node at (3.9,1) {$f_1$};
\node at (-3.6,-1.64) {$3$};
\node at (-3.6,-1.15) {$4$};
\node at (-3.6,-0.66) {$5$};
\node at (-3.6,-0.17) {$6$};
\node at (-3.6,0.32) {$7$};
\node at (-3.6,0.81) {$8$};
\node at (-3.6,1.3) {$9$};
\node at (-3.65,1.79) {$10$};
\end{tikzpicture}
\caption{$A^{-1}$ norm plots $(f_1,f_2,f_3)$ as functions of $x:=n/\eta$}
\label{fig:norm-pieces}
\end{center}
\end{figure}

We also note the following bound on $K=E_0$.

\begin{lemma}
\label{lem:K-bound}
We have $\displaystyle \|K\|_\infty \le \frac{11}{2}-\frac{17(h+w)}{6n}+O(n^{-1})$.
\end{lemma}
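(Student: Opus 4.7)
The plan is to replicate, for the kernel projector $K=E_0$, the block-wise $\infty$-norm calculation that yielded Lemma~\ref{lem:A-inv-bound}. The key point is that $K$ lies in the adjacency algebra $\mathfrak{A}$ of the coherent configuration from Section~\ref{sec:aa}, so one may write $K = \sum_{i=1}^{69} \kappa_i A_i$ with scalars $\kappa_i$ depending on $h,w,n$. Once the $\kappa_i$ are known, the $\infty$-norm is computed exactly by
$$\|K\|_\infty = \max_{\text{edge type}\, \tau}\; \sum_{i=1}^{69} |\kappa_i|\, d_i^{(\tau)},$$
where $d_i^{(\tau)}$ denotes the row sum of $A_i$ on the four block-rows indexed by edge type (row-column, row-symbol, column-symbol, box-symbol), i.e.\ the entries of Table~\ref{tab:degrees}.

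To obtain the coefficients $\kappa_i$, I would use the identity $K = I - \sum_{j=1}^4 E_j$ together with $E_4 = \frac{1}{4n^2} J$ and with the projectors $E_1, E_2, E_3$ assembled from the bases of eigenvectors described in Section~\ref{sec:eigenvalues} via $E_j = V_j (V_j^\top V_j)^{-1} V_j^\top$. Alternatively one computes $K$ directly as $V_0(V_0^\top V_0)^{-1} V_0^\top$ using the kernel basis from Section~\ref{sec:kernel}. Either way, the same computer-assisted symbolic algebra (the Sage worksheet accompanying Table~\ref{tab:Ainv-coeffs}) produces a finite list of $\kappa_i \in \Q(h,w)$. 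The piecewise-linear sum above then resolves to a closed-form expression, from which the leading asymptotic expansion $\tfrac{11}{2} - \tfrac{17(h+w)}{6n} + O(n^{-1})$ can be read off.

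The main obstacle is sign-tracking among the $\kappa_i$, since the entries of $K$ shown in Figure~\ref{fig:E0E1E2E3} come with mixed signs and the sum we need is one of \emph{absolute} values. As a sanity check, $K$ is an orthogonal projector so its diagonal entries lie in $[0,1]$ and off-diagonal entries have magnitude $O(1/n)$; since each block contributes $O(n^2)$ off-diagonal entries to a row sum, each edge type produces a block-row sum of size $O(1)$, which is consistent with the leading constant $11/2$. Concretely, one expects the leading contribution to come from the densest relations (those with degree $\Theta(n^2)$), and after cancellation the stated expansion emerges. Because the method is identical to the one just carried out for $A^{-1}$, no new ideas are needed beyond careful bookkeeping; the proof concludes by exhibiting the resulting closed-form expression and extracting its asymptotic expansion.
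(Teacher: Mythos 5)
Your approach matches the paper's: the lemma is established by exactly the computation you describe --- expressing $K=E_0$ in the adjacency-algebra basis $\{A_i\}$ via computer-assisted symbolic algebra and taking the maximum over the four edge types of $\sum_i |\kappa_i|\,d_i$ (the supporting data appear in Table~\ref{tab:K-est}, whose box-symbol column sums to the leading constant $\tfrac{11}{2}$). One small caveat: your heuristic sanity check does not quite work as stated, since entries of magnitude $O(1/n)$ over $O(n^2)$ positions per block row would give a row sum of $O(n)$ rather than $O(1)$ --- the entries of $K$ in the relations of degree $\Theta(n^2)$ are in fact $O(n^{-2})$ --- but this side remark does not affect the validity of the method.
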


\section{Perturbation}
\label{sec:perturb}

\subsection{Changes to $M$ resulting from pre-filled entries}
\label{sec:perturb1}

Let $S$ be a partial Sudoku of type $(h,w)$, where $hw=n$. Recall that $G_S$ is the graph obtained from $G_{hw}$ by deleting the edges of tiles corresponding to pre-filled entries in $S$.  Suppose $S$ has the $(1-\delta)n$ availability property.  That is, suppose every edge in $G_S$ is contained in at least $(1-\delta) n$ tiles in $G_S$.

Let $M=WW^\top$ and $M_S=W_SW_S^\top$, as introduced in Section~\ref{sec:setup}.  To set up our perturbation argument, we are interested in quantifying the change in $M$ resulting from pre-filling the entries of $S$.  It makes no sense to subtract $M_S$ from $M$ directly, since these matrices have different sizes.  However, we can use a convenient border.

Let $\widetilde{M}$ denote the $4n^2 \times 4n^2$ matrix, indexed by edges of $G$, whose entries are given by
\begin{equation}
\label{eq:M-wiggle}
\widetilde{M}(e,f) =
\begin{cases}
M_S(e,f) & \text{if } e,f \in E(G_S);\\
0 & \text{if } e\in E(G_S) \text{ and } f \not\in E(G_S);\\
M(e,f) & \text{if } e \not\in E(G_S).
\end{cases}$$
If we sort the rows and columns so that those indexed by $E(G_S)$ come first, then 
$$\widetilde{M} = \left[
\begin{array}{c|c}
M_S & O \\
\hline
\multicolumn{2}{c}{\text{as in }M}
\end{array} \right].
\end{equation}
Put $\Delta M = M-\widetilde{M}$.  We next estimate $\|\Delta M\|_\infty$ under our sparseness assumption.

For an edge $e \in E(G_S)$, let $U(e)$ denote the set of unavailable options
$$U(e) = \{ t \in T(G_{hw}) : e \in t \text{ and } f \in t \text{ for some } f \in E(G_{hw})\setminus E(G_S)\}.$$
Put $u(e)=|U(e)|$ and $\mathbf{u} = (u(e):e \in E(G_S))$.  In more detail, if $e$ is an edge of type
row-column, say $e=\{r_i,c_j\}$, then $U(e)$ keeps track of those symbols $k$ which are not able to be placed in 
cell $(i,j)$ because $k$ already appears in row $i$ or column $j$ or box $\mathrm{box}(i,j)$.  If $e$ is an edge of type row-symbol, say $e=\{r_i,s_k\}$, then $U(e)$ keeps track of those columns $j$ which are unavailable for symbol $k$ in row $i$, either because cell $(i,j)$ was pre-filled or $k$ appears somewhere else in column $j$ or box $\mathrm{box}(i,j)$. Note that several columns might be eliminated as options if $k$ appears in a box intersecting row $i$. Edges of type column-symbol behave in an analogous way.  Finally, if $e$ is an edge of type box-symbol, say $e=\{b_\ell,s_k\}$, then $U(e)$ keeps track of those cells $(i,j)$ in box $\ell$ for which $k$ is not allowed, either because $(i,j)$ was already filled in $S$, or because $k$ already appears in the row or column bundle for box $\ell$.

\begin{lemma}
\label{lem:DeltaM-u}
We have $\mathbf{0} \le \Delta M \mathds{1} \le 4\mathbf{u}$ entrywise.  In particular, $\| \Delta M \|_\infty \le 4\|\mathbf{u}\|_\infty$.
\end{lemma}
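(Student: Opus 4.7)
The plan is to exploit a block decomposition of the tile--edge inclusion matrix $W$. First I would order the rows of $W$ so that edges of $E(G_S)$ come first and the columns so that tiles of $T(G_S)$ come first. Since every tile in $T(G_S)$ has all four of its edges in $E(G_S)$, the lower-left block of $W$ vanishes, yielding
\[
W = \begin{pmatrix} W_S & X \\ O & Y \end{pmatrix},
\]
where $X,Y$ are the $\{0,1\}$-matrices recording the edges of tiles in $T(G_{hw}) \setminus T(G_S)$. Computing $M=WW^\top$ in block form and comparing against the displayed formula for $\widetilde{M}$, I would then read off
\[
\Delta M = \begin{pmatrix} XX^\top & XY^\top \\ O & O \end{pmatrix}.
\]

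This representation makes the left inequality immediate: $\Delta M$ is a product of $\{0,1\}$-matrices and hence has entrywise nonnegative entries, so $\Delta M \mathds{1} \ge \mathbf{0}$. For the right inequality, I would factor the top block of $\Delta M \mathds{1}$ as $X\bigl(X^\top \mathds{1} + Y^\top \mathds{1}\bigr)$ and observe that $X^\top \mathds{1} + Y^\top \mathds{1}$ is the column-sum vector of $\begin{pmatrix} X \\ Y \end{pmatrix}$. Each of its columns indexes a tile in $T(G_{hw}) \setminus T(G_S)$ and records its four edges, so every column sum equals $4$. Therefore the top block of $\Delta M \mathds{1}$ equals $4X\mathds{1}$, while the bottom block vanishes. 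For $e \in E(G_S)$, the entry $(X\mathds{1})_e$ counts tiles in $T(G_{hw}) \setminus T(G_S)$ containing $e$, which is exactly $u(e)$ by definition. This yields $\Delta M \mathds{1} \le 4\mathbf{u}$ (in fact with equality on the rows indexed by $E(G_S)$).

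The norm bound then follows at once, since entrywise nonnegativity of $\Delta M$ means that $\|\Delta M\|_\infty$ equals its largest row sum, namely $\max_{e \in E(G_S)} 4u(e) = 4\|\mathbf{u}\|_\infty$. There is no substantive obstacle here; the only point that needs care is lining up the two induced partitions (edges split by membership in $E(G_S)$ and tiles split by membership in $T(G_S)$) so that the block calculation for $\Delta M$ drops out cleanly and the column-sum identity applies uniformly to the tiles outside $T(G_S)$.
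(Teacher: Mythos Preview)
Your argument is correct and amounts to the same double-count as the paper's proof: for $e\in E(G_S)$, the row sum $\sum_f \Delta M(e,f)$ tallies each unavailable tile through $e$ once per edge, hence equals $4u(e)$. The only difference is presentational---you derive the explicit block form $\Delta M=\begin{pmatrix} XX^\top & XY^\top\\ O & O\end{pmatrix}$ before summing, whereas the paper argues directly on the entries of $\Delta M$ without writing out this factorization.
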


\begin{proof}
Entry $e$ of $(\Delta M) \mathds{1}$ equals $\sum_f \Delta M(e,f)$.  The summand is the number of unavailable tiles $t$ with $e \in t$ and $f \in t$.  Since each copy $t$ contains four edges, this count is at most $4 u(e)$.
\end{proof}

Now, if $S$ has the $(1-\delta)$-availability property, then by definition we have 
$\|\mathbf{u}\|_\infty \le \delta n$.  And recall that if $S$ has the $\epsilon$-dense property, then it has the $(1-3\epsilon)$-availability property, as explained at the end of Section~\ref{sec:decomp}.  These, together with Lemma~\ref{lem:DeltaM-u} immediately give bounds on $\Delta M$.

\begin{lemma}
\label{lem:DeltaM-sparse}
With $\Delta M$ constructed from $S$ as above, we have
\vspace{-11pt}
\begin{enumerate}
\item
$\| \Delta M \|_\infty \le 4 \delta n$ if $S$ has the $(1-\delta)$-availability property; and
\item
$\| \Delta M \|_\infty \le 12 \epsilon n$ if $S$ is $\epsilon$-dense;
\end{enumerate}
\end{lemma}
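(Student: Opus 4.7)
The plan is to derive both bounds directly from Lemma~\ref{lem:DeltaM-u}, which already reduces the problem to controlling $\|\mathbf{u}\|_\infty$. First I would observe that $\Delta M$ is entrywise nonnegative: inspecting the three cases in the definition of $\widetilde{M}$, the $(e,f)$-entry of $\Delta M$ equals $M(e,f)-M_S(e,f)\ge 0$ when $e,f\in E(G_S)$, equals $M(e,f)\ge 0$ when $e\in E(G_S)$ and $f\notin E(G_S)$, and vanishes when $e\notin E(G_S)$. Since $\Delta M\ge O$ entrywise, the induced $\infty$-norm equals the maximum row sum, i.e.\ $\|\Delta M\|_\infty=\max_e (\Delta M\mathds{1})(e)$, and only rows indexed by $E(G_S)$ contribute.

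Next, for part (1), I would invoke Lemma~\ref{lem:DeltaM-u} together with the observation that for any edge $e\in E(G_S)$ one has $u(e)=n-|\{t\in T(G_S):e\in t\}|$. This is because a tile $t\in T(G_{hw})$ containing $e$ fails to belong to $T(G_S)$ precisely when some edge of $t$ lies outside $E(G_S)$, and each edge of $G_{hw}$ extends to exactly $n$ tiles in $T(G_{hw})$. Under the $(1-\delta)$-availability hypothesis we have $|\{t\in T(G_S):e\in t\}|\ge(1-\delta)n$, hence $u(e)\le \delta n$ for every $e\in E(G_S)$, and Lemma~\ref{lem:DeltaM-u} gives
\[
\|\Delta M\|_\infty\le 4\|\mathbf{u}\|_\infty\le 4\delta n.
\]

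Finally, for part (2), I would appeal to the implication established at the end of Section~\ref{sec:decomp}: any $\epsilon$-dense partial Sudoku satisfies the $(1-3\epsilon)$-availability property. Substituting $\delta=3\epsilon$ into part (1) yields $\|\Delta M\|_\infty\le 12\epsilon n$, completing the proof.

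There is no real obstacle here; the argument is essentially bookkeeping once Lemma~\ref{lem:DeltaM-u} and the $\epsilon$-dense$\,\Rightarrow\,(1-3\epsilon)$-availability implication are in hand. The only point worth flagging is the nonnegativity of $\Delta M$, which justifies passing from the row-sum bound $\Delta M\mathds{1}\le 4\mathbf{u}$ to a true $\infty$-norm bound; this is what lets the one-sided estimate of Lemma~\ref{lem:DeltaM-u} feed cleanly into the perturbation arguments in the next subsection.
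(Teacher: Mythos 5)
Your proof is correct and follows essentially the same route as the paper: Lemma~\ref{lem:DeltaM-u} plus the observation that $(1-\delta)$-availability gives $\|\mathbf{u}\|_\infty\le\delta n$, and the $\epsilon$-dense $\Rightarrow(1-3\epsilon)$-availability implication from Section~\ref{sec:decomp}. Your explicit check that $\Delta M$ is entrywise nonnegative is a worthwhile detail the paper leaves implicit, but it does not change the argument.
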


\subsection{A guarantee on nonnegative solutions}
\label{sec:perturb2}

The following can be distilled from \cite[Section 3]{BD}.

\begin{lemma}
\label{lem:perturb}
Let $A$ be an $N \times N$ invertible matrix over the reals.  Suppose $A-\Delta A$ is a perturbation.   Then
\vspace{-10pt}
\begin{enumerate}
\item
$A-\Delta A$ is invertible provided $\|A^{-1} \Delta A \|_\infty <1$; and
\item
the solution $\vx$ to $(A-\Delta A)\vx = A \mathds{1}$ is entrywise nonnegative provided $\|A^{-1} \Delta A\|_\infty \le \frac{1}{2}$.
\end{enumerate}
\end{lemma}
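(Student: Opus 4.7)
The plan is to treat this as a standard Neumann series perturbation argument, factoring through $A^{-1}$ in order to reduce both claims to the behavior of $I - A^{-1}\Delta A$. Write
\[
A - \Delta A \;=\; A\bigl(I - A^{-1}\Delta A\bigr),
\]
so invertibility of $A-\Delta A$ is equivalent to invertibility of $I - A^{-1}\Delta A$. Setting $r := \|A^{-1}\Delta A\|_\infty$, the assumption $r<1$ in part~(1) lets me form the Neumann series $\sum_{k\ge 0}(A^{-1}\Delta A)^k$, which converges in the $\infty$-operator norm (a Banach algebra norm) and is easily checked to be the inverse of $I-A^{-1}\Delta A$. Multiplying by $A^{-1}$ on the left then gives $(A-\Delta A)^{-1}$, proving (1).

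For part~(2), I would use the explicit Neumann expansion. From (1), the system $(A-\Delta A)\vx = A\mathds{1}$ can be rewritten as $(I-A^{-1}\Delta A)\vx = \mathds{1}$, whence
\[
\vx \;=\; \sum_{k=0}^{\infty} (A^{-1}\Delta A)^k\,\mathds{1} \;=\; \mathds{1} + \sum_{k=1}^{\infty} (A^{-1}\Delta A)^k\,\mathds{1}.
\]
The $k=0$ term is $\mathds{1}$, and the tail is bounded in $\infty$-norm by a geometric series:
\[
\Bigl\|\sum_{k=1}^{\infty} (A^{-1}\Delta A)^k\mathds{1}\Bigr\|_\infty \;\le\; \sum_{k=1}^{\infty} r^k \|\mathds{1}\|_\infty \;=\; \frac{r}{1-r}.
\]
Under the hypothesis $r \le \tfrac12$ this tail has $\infty$-norm at most $1$, so each coordinate of $\vx$ differs from $1$ by at most $1$, forcing $\vx \ge \mathbf{0}$ entrywise.

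I do not expect a real obstacle here; the only minor care needed is to confirm that $\|\cdot\|_\infty$ on matrices is submultiplicative (which is the point of using the operator norm rather than a pointwise max) and that $\|M \vv\|_\infty \le \|M\|_\infty \|\vv\|_\infty$, so that the tail estimate survives application to $\mathds{1}$. With those standard facts, both parts follow from essentially one line of geometric-series bookkeeping.
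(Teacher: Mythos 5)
Your proposal is correct and follows essentially the same route the paper indicates: the paper itself only sketches the argument via the Neumann expansion $(A-\Delta A)^{-1}=\sum_{k=0}^\infty (A^{-1}\Delta A)^k A^{-1}$ (citing Horn and Johnson for details), and your write-up simply fills in that sketch, including the geometric-series tail bound $r/(1-r)\le 1$ for $r\le\tfrac12$ that forces $\vx\ge\mathbf{0}$. No gaps; the submultiplicativity of the induced $\infty$-norm is exactly the standard fact needed.
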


Lemma~\ref{lem:perturb} can be proved using the expansion $(A-\Delta A)^{-1}=\sum_{k=0}^\infty (A^{-1} \Delta A)^k A^{-1}$.  More details on matrix norms and the convergence of this series can be found in Horn and Johnson's book \cite{MatrixAnalysis}.

We would like to apply Lemma~\ref{lem:perturb} to the perturbation $M-\Delta M$, but we must take care to handle the nontrivial kernel.  Of various possible approaches, one convenient thing to do is to place those columns of $K=E_0$ corresponding to non-edges of $G_S$ in the perturbation.
In more detail, let $A=M+\eta K$ and observe that $A \mathds{1}=4n \mathds{1}$.  That is, for this choice of $A$, the right side of the system in Lemma~\ref{lem:perturb} is just a scalar multiple of the all-ones vector. Define $\Delta A = \Delta M + \eta K'$, where
$$K’(e,f)=\begin{cases} 0 & \text{if } f \in E(G_S);\\
K(e,f) & \text{otherwise}.
\end{cases}$$

Note that
\begin{equation}
\label{eq:ainv-kprime}
A^{-1} (\eta K’) = \left(\frac{1}{\eta} K + \sum_{j=1}^4 \frac{1}{jn} E_j \right) (\eta K’) =  K’, 
\end{equation}
since the columns of $K'$ are orthogonal to each of the other eigenspaces.

\begin{lemma}
\label{lem:Kprime-sparse}
Suppose $S$ is $\epsilon$-dense. Then, for large $h$ and $w$, $\| K’ \|_\infty \le (\epsilon+o(1)) \|K\|_\infty$.
\end{lemma}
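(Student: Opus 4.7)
The plan is to identify each non-edge with its pre-filled tile. Since $K$ is an orthogonal projector onto an eigenspace of $M \in \mathfrak{A}$, it lies in the Bose--Mesner algebra $\mathfrak{A}$ of the coherent configuration, and hence $|K| = \sum_i c_i A_i$ for some $c_i \ge 0$. The row sums of $|K|$ are constant within each edge type $T$, so $\|K\|_\infty = \max_T \sum_i c_i d_i^T$. Let $N = E(G_{hw}) \setminus E(G_S)$; then $(|K'|\mathds{1})_e = \sum_{f \in N} |K(e,f)|$. Each non-edge $f$ comes from a unique pre-filled tile $t \in S$: if $f = \{r_i, s_k\} \in N$, the symbol $s_k$ occurs exactly once in row $r_i$, pinning down the cell and hence $t$; similar reasoning holds for the other three edge types, and conversely all four edges of each $t \in S$ lie in $N$. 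Consequently
\[
(|K'|\mathds{1})_e \;=\; \sum_{t \in S} \psi(e, t), \qquad \psi(e, t) \,:=\, \sum_{f \in t} |K(e, f)|.
\]

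I would then partition $S$ according to the vertex overlap $|V(e) \cap V(t)| \in \{0, 1, 2\}$ and bound each piece separately. Under $\epsilon$-density, at most one pre-filled tile has overlap $2$ with $e$ (the unique pre-filled tile containing $e$, should $e \in N$). For overlap $1$, each fixed vertex of $e$ lies in at most $\epsilon n$ pre-filled tiles -- this uses $\epsilon n$ filled cells per row, column or box, and the bound $\epsilon h \cdot w = \epsilon n$ occurrences per symbol -- giving at most $2\epsilon n$ such tiles. The remaining $\le |S| \le \epsilon n^2$ pre-filled tiles have overlap $0$. From $\|K\|_\infty = O(1)$ (Lemma~\ref{lem:K-bound}) together with the trace identity $\operatorname{tr} K = \dim \ker M = O(n)$ (Proposition~\ref{prop:rank}), one can show the coefficients $c_i$ are all small: $c_i = O(1/n)$ for the identity relations and more generally $c_i d_i^T \le \|K\|_\infty = O(1)$. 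These imply $\psi(e, t) = O(1/n)$ when $|V(e) \cap V(t)| \ge 1$ and $\psi(e, t) = O(1/n^2)$ when $|V(e) \cap V(t)| = 0$. Summing, the three cases contribute $o(1)$, $O(\epsilon)$ and $O(\epsilon)$, giving $\|K'\|_\infty = O(\epsilon \|K\|_\infty)$.

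The hard part is sharpening this to $(\epsilon + o(1))\|K\|_\infty$ with the claimed leading constant. I would do this by a relation-by-relation accounting: for each $R_i$, show $|N \cap R_i(e)| \le (\epsilon + o(1)) d_i^T$, combining the full suite of $\epsilon$-density bounds of Lemma~\ref{lem:eps-GS} (including the $\epsilon h$ per row-bundle and $\epsilon w$ per column-bundle symbol counts) with explicit coefficients $c_i$ read off from the spectral decomposition of Section~\ref{sec:eigenvalues}. A handful of small-degree relations (notably the identity relations $R_1, R_{16}, R_{32}, R_{62}$ and relations like $R_2$ of degree $w-1$) must be handled separately, since direct density bounds only yield $|N \cap R_i(e)| \le d_i^T$ there; for these the trace argument forces $c_i = O(1/n)$, so their contribution to both $\|K\|_\infty$ and $\|K'\|_\infty$ is $o(1)$. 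The averaging identity $\sum_{t \in T(G_{hw})} \psi(e, t) = n(|K|\mathds{1})_e$ combined with $|S| \le \epsilon n^2$ gives the heuristic target $\epsilon \|K\|_\infty$ and guides the bookkeeping.
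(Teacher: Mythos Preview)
Your final paragraph is essentially the paper's argument: write $K=\sum_i c_iA_i$, so that $\sum_f|K'(e,f)|=\sum_i|c_i|\,d'_i(e)$ with $d'_i(e)=|\{f\in N:(e,f)\in R_i\}|$; then for each relation except a handful of small-degree ones, some $\epsilon$-density feature (cells in a row, symbol occurrences in a row bundle, etc.) gives $d'_i(e)\le\epsilon\,d_i$ to leading order, while the exceptional relations $i\in\{1,2,4,16,32,62\}$ contribute $o(1)$. The paper carries this out with an explicit table, but your sketch captures the mechanism.

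However, the tile-by-tile route in your first two paragraphs does not work as stated, and the trace argument does not rescue it. First, $\dim\ker M=3n+(h+w)(n-1)$ is $\Theta((h+w)n)$, not $O(n)$; correspondingly the identity-relation coefficients are not all $O(1/n)$: for instance $c_{62}\sim(h+w)/2n$ and $c_{16}\sim 1/(2h)=w/(2n)$. More seriously, your uniform bounds $\psi(e,t)=O(1/n)$ for overlap~$1$ and $O(1/n^2)$ for overlap~$0$ are false. Take $e=r_1s_1$ and a pre-filled tile $t$ through $s_1$ whose row $r_{i'}$ satisfies $r_{i'}\sneq r_1$; then the edge $f=r_{i'}s_1\in t$ has $(e,f)\in R_{18}$, and $|c_{18}|\sim 1/(2h)=w/(2n)$, so $\psi(e,t)$ is of order $w/n$, not $1/n$. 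Multiplying by your count of $2\epsilon n$ overlap-$1$ tiles gives $O(\epsilon w)$, which blows up. Similarly, for overlap~$0$, relation $R_5$ (same box, different row and column) has $|c_5|\sim 1/(2n)$, not $O(1/n^2)$. What actually controls these cases is not a uniform $\psi$ bound but the \emph{specific} density feature attached to each relation: there are only $\le\epsilon h$ pre-filled tiles through $s_1$ in $r_1$'s row bundle, so the $R_{18}$ contribution is $\epsilon h\cdot w/(2n)=\epsilon/2$; there are only $\le\epsilon n$ pre-filled cells in a given box, so the $R_5$ contribution is $\epsilon n\cdot 1/(2n)=\epsilon/2$. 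In other words, the tile partition by overlap is too coarse---you are forced back to the relation-by-relation accounting of your last paragraph, which is exactly what the paper does from the outset.
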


\begin{proof}
Write $K=\sum_{i=1}^m c_i A_i$.  Fix $e \in E(G_{hw})$.
Then we have
$$\sum_{f \in E(G_{hw})} |K(e,f)| = 
\sum_i |c_i| d_i(e),$$
where $d_i(e)$ is the number of edges $f$ with $(e,f) \in R_i$.  Recall that $d_i(e)$ is zero unless $e$ is of an edge type corresponding to the first coordinate of $R_i$, and we may assume a canonical choice $r_1c_1$, $r_1s_1$, $c_1s_1$, or $s_1 b_1$ for $e$.  


Let $\overline{G_S}$ denote the complement of $G_S$ in 
$G_{hw}$.  Then we have
\begin{equation}
\label{eq:Kprime-estimate}
\sum_{f \in E(G_{hw})} |K'(e,f)| = 
\sum_{f \in E(\overline{G_S})} |K(e,f)| = 
\sum_{i=1}^m |c_i| d'_i(e),
\end{equation}
where $d'_i(e)$ is the number of edges $f \in E(\overline{G_S})$ with $(e,f) \in R_i$.  With the exception of $i \in I :=\{1,2,4,16,32,62\}$, each relation $R_i$ has an associated 
feature which, owing to our $\epsilon$-density assumption, limits the number of missing edges $f$ in $G_S$ with $(e,f) \in R_i$.
These features are indicated in Table~\ref{tab:K-est}, along with bounds on leading terms of $|c_i|d_i'(e)$.  A legend and upper bound on corresponding $d'_i(e)$ are given in Table~\ref{tab:sparse-legend}.  Terms with $i \in I$ are of lower order.  Otherwise, when we compute the sum \eqref{eq:Kprime-estimate}, we obtain the same leading terms as in the computation of $\|K\|_\infty$, each times $\epsilon$.  The edge type with largest total coefficient of $\epsilon$ is the box-symbol type, or column 4 in Table~\ref{tab:K-est}.  This results in
$$\|K'\|_\infty \le \left(\frac{11}{2}+O(h^{-1}+w^{-1}) \right) \epsilon + \frac{h+w}{2n} + O(h^{-2}+w^{-2}+n^{-1}).\hfill \qedhere$$
\end{proof}

\begin{table}[htbp]
\small
\begin{tabular}{|rrr|rrr|rrr|rrr|}
\hline
& leading  & sparse	& & leading  & sparse	& & leading  & sparse	& & leading  & sparse \\
$i$	& term & feature	& $i$	& term & feature	&$i$	& term & feature	&$i$	& term & feature\\
\hline
1	&	$ 3/2n $	&	-	&	13	&	$ \epsilon/2 $	&	r	&	25	&	$ \epsilon/2 $	&	c	&	42	&	
$\epsilon/2 $	&	b	\\
2	&	$ 1/h $	&	-	&	14	&	$ \epsilon/6 $	&	rb	&	26	&	$ \epsilon/6 $	&	cb	&	43	&	
$ \epsilon/6 $	&	rb	\\
3	&	$ \epsilon/2 $	&	r	&	15	&	$ \epsilon/12 $	&	all	&	27	&	$ \epsilon/12 $	&	all	&	44	&	$ \epsilon/6 $	&	cb	\\
4	&	$ 1/w $	&	-	&	16	&	$ 1/2h $	&	-	&	30	&	$ \epsilon/3 $	&	s	&	45	&	$ \epsilon/12 $	&	all	\\
5	&	$ \epsilon/2 $	&	b	&	17	&	$ \epsilon/2 $	&	r	&	31	&	$ \epsilon/12 $	&	all	&	50	&	$\epsilon/2 $	&	srb	\\
6	&	$ \epsilon/3 $	&	rb	&	18	&	$ \epsilon/2 $	&	srb	&	32	&	$ 1/2w $	&	-	&	51	&	$ \epsilon/6 $	&	rb	\\
7	&	$ \epsilon/2 $	&	c	&	19	&	$ \epsilon/3 $	&	rb	&	33	&	$ \epsilon/2 $	&	c	&	52	&	$ \epsilon/6 $	&	s	\\
8	&	$ \epsilon/3 $	&	cb	&	20	&	$ \epsilon/6 $	&	s	&	34	&	$ \epsilon/2 $	&	scb	&	53	&	$ \epsilon/12 $	&	all	\\
9	&	$ \epsilon/12 $	&	all	&	21	&	$ \epsilon/12 $	&	all	&	35	&	$ \epsilon/3 $	&	cb	&	58	&	$\epsilon/2 $	&	scb	\\
10	&	$ \epsilon/2 $	&	r	&	28	&	$ \epsilon/3 $	&	s	&	36	&	$ \epsilon/6 $	&	s	&	59	&	$ \epsilon/6 $	&	cb	\\
11	&	$ \epsilon/6 $	&	rb	&	29	&	$ \epsilon/12 $	&	all	&	37	&	$ \epsilon/12 $	&	all	&	60	&	$ \epsilon/6 $	&	s	\\
12	&	$ \epsilon/12 $	&	all	&	46	&	$ \epsilon/2 $	&	srb	&	54	&	$ \epsilon/2 $	&	scb	&	61	&	$ \epsilon/12 $	&	all	\\
22	&	$ \epsilon/2 $	&	c	&	47	&	$ \epsilon/6 $	&	rb	&	55	&	$ \epsilon/6 $	&	cb	&	62	&	$ (h+w)/2n $	&	-	\\
23	&	$ \epsilon/6 $	&	cb	&	48	&	$ \epsilon/6 $	&	s	&	56	&	$ \epsilon/6 $	&	s	&	63	&	$ \epsilon/2 $	&	b	\\
24	&	$ \epsilon/12 $	&	all	&	49	&	$ \epsilon/12 $	&	all	&	57	&	$ \epsilon/12 $	&	all	&	64	&	$ \epsilon/2 $	&	srb	\\
38	&	$ \epsilon/2 $	&	b	&		&		&		&		&		&		&	65	&	$ \epsilon/3 $	&	rb	\\
39	&	$ \epsilon/6 $	&	rb	&		&		&		&		&		&		&	66	&	$ \epsilon/2 $	&	scb	\\
40	&	$ \epsilon/6 $	&	cb	&		&		&		&		&		&		&	67	&	$ \epsilon/3 $	&	cb	\\
41	&	$ \epsilon/12 $	&	all	&		&		&		&		&		&		&	68	&	$ \epsilon/3 $	&	s	\\
	&		&		&		&		&		&		&		&		&	69	&	$ \epsilon/4 $	&	all	\\
\hline
\end{tabular}
\normalsize
\caption{Terms contributing to $\|K\|_\infty$}
\label{tab:K-est}
\end{table}

\begin{table}[htbp]
\begin{tabular}{|rlr|rlr|}
\hline
 & sparse feature &  bound &  & sparse feature &  bound\\
\hline
r & cells filled in a row & $\epsilon n$ & b & cells filled in a box & $\epsilon n$\\
c & cells filled in a column & $\epsilon n$ & s & occurrences of a symbol & $\epsilon n$\\
rb & cells filled in a row bundle & $\epsilon nh$ & srb & times a symbol is in a row bundle & $\epsilon h$\\
cb & cells filled in a column bundle & $\epsilon nw$ & scb & times a symbol is in a column bundle & $\epsilon w$\\
all & cells filled in all of $S$ & $\epsilon n^2$ & & &\\
\hline
\end{tabular}
\normalsize
\caption{Legend for Table~\ref{tab:K-est} and $\epsilon$-density bounds}
\label{tab:sparse-legend}
\end{table}

Putting together Lemmas~\ref{lem:A-inv-bound}, \ref{lem:K-bound}, \ref{lem:DeltaM-sparse} and \ref{lem:Kprime-sparse}, we obtain a bound on $A^{-1} \Delta A$.

\begin{prop}
\label{prop:Ainv-DeltaA}
Suppose $S$ is an $\epsilon$-dense partial Sudoku of type $(h,w)$ where $h,w$ are large.  Then
$\| A^{-1} \Delta A \|_\infty  < 101\epsilon/2+o(1).$
\end{prop}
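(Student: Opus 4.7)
The plan is to split $\Delta A = \Delta M + \eta K'$ and bound $A^{-1} \Delta A$ term by term using the triangle inequality
\[
\|A^{-1} \Delta A\|_\infty \le \|A^{-1} \Delta M\|_\infty + \|A^{-1}(\eta K')\|_\infty.
\]
Each of the two pieces has a matching lemma that does essentially all the work.

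For the first piece, I would use submultiplicativity of the induced $\infty$-norm, namely $\|A^{-1} \Delta M\|_\infty \le \|A^{-1}\|_\infty \cdot \|\Delta M\|_\infty$. Lemma~\ref{lem:A-inv-bound} gives $\|A^{-1}\|_\infty < 15/(4n)$, and Lemma~\ref{lem:DeltaM-sparse}(2) gives $\|\Delta M\|_\infty \le 12\epsilon n$ under the $\epsilon$-dense hypothesis. Multiplying these yields the clean bound $\|A^{-1} \Delta M\|_\infty < 45\epsilon$. No cancellation or eigenstructure is needed here since the $\epsilon$-dense assumption has already been converted into a row-sum bound on $\Delta M$.

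For the second piece, the key observation is equation~\eqref{eq:ainv-kprime}, which shows that $K'$ lies in the kernel projector's image in the sense that $A^{-1}(\eta K') = K'$; this is because the columns of $K'$ are supported on $\ker(M)$, where the action of $A^{-1}$ is simply multiplication by $1/\eta$. Consequently $\|A^{-1}(\eta K')\|_\infty = \|K'\|_\infty$, and Lemma~\ref{lem:Kprime-sparse} bounds this by $(11/2)\epsilon + o(1)$ as $h,w \to \infty$.

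Adding the two pieces gives $\|A^{-1} \Delta A\|_\infty < 45\epsilon + (11/2)\epsilon + o(1) = (101/2)\epsilon + o(1)$, matching the claimed bound. The only real content is already packaged in Lemmas~\ref{lem:A-inv-bound}, \ref{lem:DeltaM-sparse}, and \ref{lem:Kprime-sparse}; the proof itself is essentially an assembly. The main conceptual obstacle (already cleared in the lemmas above) was the handling of the kernel direction: a naive estimate $\|A^{-1} \Delta A\|_\infty \le \|A^{-1}\|_\infty \|\Delta A\|_\infty$ would be useless because $\|\eta K\|_\infty$ is $\Theta(n)$ while $\|A^{-1}\|_\infty$ is only $\Theta(1/n)$, so one cannot afford even a constant-factor slack on the $\eta K'$ part. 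The trick of folding only the non-edge columns of $\eta K$ into the perturbation, combined with the identity $A^{-1}(\eta K') = K'$, is precisely what lets us replace that $\Theta(1)$ contribution by the genuinely small quantity $\|K'\|_\infty = O(\epsilon)$.
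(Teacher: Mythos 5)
Your proof is correct and follows essentially the same route as the paper: the same splitting $\Delta A = \Delta M + \eta K'$, submultiplicativity via Lemmas~\ref{lem:A-inv-bound} and \ref{lem:DeltaM-sparse} for the first term, and the identity \eqref{eq:ainv-kprime} together with Lemma~\ref{lem:Kprime-sparse} for the second. Nothing further is needed.
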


\begin{proof}
From \eqref{eq:ainv-kprime}, submultiplicativity and the triangle inequality, 
$$\| A^{-1} \Delta A \|_\infty \le \| A^{-1}\|_\infty \|\Delta M \|_\infty + \|K' \|_\infty < \tfrac{15}{4n} \times 12\epsilon n+ \tfrac{11}{2} \epsilon + o(1) = \tfrac{101}{2} \epsilon + o(1).\hfill \qedhere$$
\end{proof}

\subsection{Proof of the main result}
\label{sec:proof}

We are now ready to prove our result on partial Sudoku completion under the $\epsilon$-dense assumption.

\begin{proof}[Proof of Theorem~\ref{thm:main}]
Apply  Lemma~\ref{lem:perturb} to $A$ and $\Delta A$ constructed as above.  
Under the assumption $\epsilon<1/101$, Proposition~\ref{prop:Ainv-DeltaA} gives 
$\|A^{-1} \Delta A \|_\infty  < 1/2$ for sufficiently large $h,w$. This implies an entrywise nonnegative solution to $(A-\Delta A) \mathbf{x} = \mathds{1}$. Let $\mathbf{x}'$ denote the restriction of $\mathbf{x}$ to $E(G_S)$.
Since $A-\Delta A$ is block lower-triangular with respect to the partition into edges and non-edges of $G_S$, it follows that $(M_S+ \eta K[S]) \mathbf{x}' = \mathds{1}$.  We note that $M_S$ and $K[S]$ are symmetric and satisfy the conditions in Proposition~\ref{prop:Kres-MG}.  Therefore, Lemma~\ref{lem:shift} implies $M_S \mathbf{x}' = \mathds{1}$. This, in turn, implies a nonnegative solution to the linear system for completing $S$ via the coefficient matrix $W_S$.
\end{proof}

It is worth a remark that the lower order terms in Lemmas~\ref{lem:A-inv-bound} and \ref{lem:K-bound} are actually negative.  This means our hypothesis of large $h$ and $w$ is only really used to control the mild lower-order terms in $K'$.  In general, our method is robust for small partial Sudoku, often succeeding in practice with densities much larger than $1/101$.  For instance, the completion shown in Figure~\ref{fig:frac-sud-example} came from applying the above proof method.

\section{Thin boxes}
\label{sec:thin}

In this section, we investigate in more detail the case of Sudoku of type $(h,w)$ with fixed width $w$ and height $h=n/w$ where $n$ is a large multiple of $w$.  In Section~\ref{sec:intro}, we observed that there exist non-completable partial $n \times n$ Sudoku latin squares with no row, column, symbol, or box used very often; indeed, this motivated our row/column bundle condition for $\epsilon$-density.  The idea is that symbol $k$ can be forced in an entry $(i,j)$ using only $O(h+w)$ pre-filled occurrences of $k$ in the row and column bundle containing $(i,j)$.  Now, when $w$ is fixed and $h=n/w$, this construction requires a symbol be used $O(n)$ times.  One could hope that all partial Sudoku of type $(n/w,w)$ which are $\epsilon$-dense in the sense of latin squares could still admit a completion, even without the column bundle condition.

Notice that this would not follow directly from our earlier work; for instance if $w=2$, any cell pre-filled with symbol $k$ in the the top-left box prohibits the placement of $k$ in the same column of the bottom-left box.  This eliminates $n/2$ options for placing $k$ in that box, resulting in a perturbation which is far too severe for our methods of Section~\ref{sec:perturb}.

The next result is a general construction giving a barrier to (fractional) completion in the context of fixed $w$.  It is similar to the second example in Figure~\ref{fig:no-completion}.

\begin{prop}
\label{prop:dh-thin}
Let $w$ be an integer, $w \ge 2$.  For any $\epsilon>1/3w$, there exists a partial Sudoku of type $(h,w)$, with no completion and such that every row, column, symbol, and box is used at most $\epsilon hw$ times.
\end{prop}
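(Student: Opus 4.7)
The plan is to generalize the second obstruction of Figure~\ref{fig:no-completion}, but with many symbols cooperating to create a pigeonhole failure in the top-left box, rather than just two symbols forced into one cell.  Given $\epsilon > 1/(3w)$ and $w \ge 2$ fixed, I would choose $h$ sufficiently large and nonnegative integers $p \le h-1$, $q \le w-1$, $m$ with
\[
m > (h-p)(w-q), \qquad p+q \le \epsilon h w, \qquad \tfrac{mp}{h-1},\ \tfrac{mq}{w-1},\ m \le \epsilon h w.
\]
Pick $m$ distinct symbols $s_1,\dots,s_m$ and, for each $i$, place $p$ copies of $s_i$ in row-bundle~$1$ outside the top-left box $B_{1,1}$ (one copy per chosen row from $\{2,\dots,h\}$, in one of the $h-1$ other boxes of row-bundle~$1$, using a Latin-square-like bijection) together with $q$ copies of $s_i$ in column-bundle~$1$ outside $B_{1,1}$ (one per chosen column from $\{2,\dots,w\}$, in one of the $w-1$ other boxes of column-bundle~$1$).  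The bijective design spreads placements as evenly as possible across rows, columns, and boxes.

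Next I would verify that this partial Sudoku has no completion.  The $p$ row-blockings remove $p$ of the $h$ rows of $B_{1,1}$ as options for $s_i$, and the $q$ column-blockings remove $q$ of the $w$ columns; so $s_i$ must occupy a cell of the unblocked $(h-p)\times(w-q)$ subgrid of $B_{1,1}$.  Since $m > (h-p)(w-q)$, the $m$ symbols cannot all fit there, and pigeonhole rules out any completion (fractional or exact; the fractional obstruction follows from the same box-capacity violation).

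Finally I would carry out the usage count to confirm the density bound.  Each designated symbol appears $p+q$ times; each row in $\{2,\dots,h\}$ appears about $mp/(h-1)$ times (likewise columns and other boxes in the two bundles), while filler-symbol usage, box usage for $B_{1,1}$, and all other counts are smaller or automatically controlled.  The budget constraints above ensure every row, column, symbol, and box is used at most $\epsilon h w$ times.

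The main obstacle is the parameter optimization: choosing $p,q,m$ so that all four bounds and the pigeonhole condition hold simultaneously.  The factor of $3$ in $1/(3w)$ comes from balancing the symbol cost $p+q$ against the pigeonhole depth and the per-row/per-column/per-box averages; one checks that for $\epsilon$ close to $1/(3w)$ there is a feasible choice (with $p,q,m$ all of order $h/3$ up to lower-order terms in $w$), while for $\epsilon \le 1/(3w)$ the system becomes infeasible.  Rounding and divisibility issues are handled by taking $h$ large relative to $w$ and $1/(\epsilon - 1/(3w))$.
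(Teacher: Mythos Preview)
There is a genuine gap.  Your pigeonhole step says ``$s_i$ must occupy a cell of the unblocked $(h-p)\times(w-q)$ subgrid of $B_{1,1}$; since $m>(h-p)(w-q)$, the $m$ symbols cannot all fit there.''  That argument is valid only if the $p$ blocked rows and $q$ blocked columns are \emph{the same for every} $s_i$, so that all $m$ symbols compete for a common region.  But if the blocked rows are shared, then each of those $p$ rows receives $m$ entries (one per symbol), giving row usage exactly $m$, not $mp/(h-1)$.  Your density accounting explicitly assumes the placements are spread (``each row in $\{2,\dots,h\}$ appears about $mp/(h-1)$ times''), i.e.\ that different symbols block different rows --- and in that case the allowed regions differ from symbol to symbol and the simple pigeonhole no longer gives a contradiction.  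You cannot have it both ways.

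If you fix the argument by using common blocked rows and columns, the optimization does not reach $1/(3w)$.  With $q=w-1$ (the best choice) and common blocked rows, the constraints become $m\le\epsilon hw$, $p+w-1\le\epsilon hw$, and $m>h-p$; combining the last two gives $h+w-1<2\epsilon hw$, hence only $\epsilon>1/(2w)$ for large $h$.  (Your claim that $p,q,m$ are all of order $h/3$ cannot hold either, since $q\le w-1$ is bounded.)  The paper obtains the extra factor via an idea absent from your sketch: it spends a second set $A'$ of \emph{filler} symbols, each used only once, to occupy $a$ cells of column~1 in the target box.  This blocks those cells for every target symbol at a cost of only $a$ to column~1 and $1$ to each such row and filler symbol --- far cheaper than forcing each target symbol to visit those rows.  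With that trick the three binding costs (column~1 usage, target-symbol usage, and row usage in the externally blocked rows) are each $a$, while the remaining space is $h-2a+w-1$, yielding $3a>h+w-1$ and hence the threshold $1/(3w)$.
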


\begin{proof}
Suppose $h \ge w$.  Put $a=\lceil (h+w)/3 \rceil$ and $n=hw$.  Take $A,A'$ as disjoint sets of $a$ symbols, which is possible since $n=hw \ge 2a$ holds under the assumption $h \ge w \ge 2$.  Let $\mathcal{L}$ be the leftmost box bundle; that is, $\mathcal{L}$ consists of the first $w$ columns and box numbers $hj+1$ for $j=1,\dots,w$.
Define a partial Sudoku $S$ of type $(h,w)$ as follows: (a)
put the elements of $A'$ in entries $(i,1)$, $i=1,\dots,a$;  
(b)
put the elements of $A$ in column $j$ of the $j$th box of $\mathcal{L}$ for each $j=2,\dots,w$;
(c) 
put the elements of $A$ strictly to the right of $\mathcal{L}$ in row $i$ for $i=a+1,\dots,2a-w+1$.   Several remarks are needed.  First, each of the placements of (a) and (b) fit within the respective boxes because $a \le h$ holds under the assumption $h \ge w \ge 2$.
Next, we claim that (c) can be done in such a way that no $h \times w$ box has repeated elements.  To achieve this, we can order the elements of $A$ somehow in first of these rows, starting at column $w+1$, and then shift this same ordering to the left by multiples of $w$ in each successive row.  To check that there is room for this, we note that $(a-w)w+a \le n-w$.  This holds with equality for $h=w=a=2$ and otherwise can be easily verified with the estimate $a \le (h+w+2)/3$.

Now, by construction, every row, column and box contains at most $a$ elements.  Each symbol of $A$ occurs $(w-1)+(a-w+1)=a$ times and each symbol of $A'$ occurs exactly once.
Given a hypothetical fractional completion of $S$, consider where the elements of $A$ would occur in the first box.  Columns $2,\dots,w$ are not available by (b), and rows $1,\dots,2a-w+1$ of the first column are blocked by (a) and (c).  So, the elements of $A$ must fit into the entries $(i,1)$ for $2a-w+1 < i \le h$.  But there are only $h+w-2a-1<a$ such entries.  This is a contradiction, and hence $S$ has no fractional completion.
 
Finally, note that by taking $h$ sufficiently large, we can ensure $a/n<\epsilon$.  This completes the proof.
\end{proof}

Proposition~\ref{prop:dh-thin} shows that, in the case of thin boxes with fixed $w$, any result guaranteeing fractional completion with up to $\epsilon n$ occurrences of a row, column, symbol, or box (ignoring row bundle density) must have $\epsilon$ being a function of $w$.  With some minor adaptation, our methods can produce a result of this form.
First, though, it is helpful to have a technical lemma on adding entries to latin rectangles.  Let us say that an $m \times n$ partial latin rectangle is $\epsilon$-\emph{dense} if every row has at most $\epsilon n$ filled cells, every column has at most $\epsilon m$ filled cells, and every symbol is used at most $\epsilon m$ times.

\begin{lemma}
\label{lem:extension}
Suppose $0<\epsilon,\delta < \frac{1}{6}$.
Let $n \ge m$ and suppose we are given an $\epsilon$-dense $m \times n$ partial latin rectangle $P$. Let $A_1,\dots,A_n \subset [n]$ with $|A_j| < \delta m$ for each $j$ and $|\{j:k \in A_j\}| < \delta m$ for each $k$.  Then $P$ is contained in a 
$3(\delta+\epsilon)$-dense $m \times n$ partial latin rectangle $P'$ such that, for each $j=1,\dots,n$, column $j$ of $P'$ contains the elements of $A_j$.
\end{lemma}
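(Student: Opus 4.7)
My plan is to build $P'$ greedily, placing each missing element of $A_j$ into a cell of column $j$ subject to a per-row budget on new entries. Enumerate the pairs $(j,k)$ with $k \in A_j$ (skipping those where $k$ already occurs in column $j$ of $P$) in an arbitrary order, and fix the budget $D := 3\delta n$. For each pair in turn, I seek a row $i \in [m]$ such that (a) cell $(i,j)$ is empty in the current partial rectangle, (b) symbol $k$ does not appear in row $i$ of the current partial rectangle, and (c) row $i$ has received fewer than $D$ new entries so far; then I place $k$ in cell $(i,j)$.

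The main obstacle is verifying feasibility, that is, showing that some row satisfies (a), (b), (c) at every step. This will come from a union bound on excluded rows. Condition (a) is violated by at most $\epsilon m$ rows (from cells filled in $P$ in column $j$) plus at most $|A_j|-1<\delta m$ rows (from our process in column $j$ so far), giving at most $(\epsilon+\delta)m$ excluded rows. Condition (b) is violated by at most $\epsilon m$ rows (from occurrences of $k$ in $P$) plus at most $|\{j' : k \in A_{j'}\}|-1 < \delta m$ rows (from placements of $k$ during our process in other columns), again at most $(\epsilon+\delta)m$. For (c), since the total number of placements is at most $\sum_j |A_j| < \delta m n$, at most $\delta m n / D = m/3$ rows can be filled to capacity. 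The total count of excluded rows is strictly less than $2(\epsilon+\delta)m + m/3$, which is less than $m$ precisely when $\epsilon + \delta < 1/3$. This holds under the hypothesis $\epsilon, \delta < \tfrac{1}{6}$, so a feasible row exists at every step.

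Finally I verify the required conclusions. By (a) and (b), $P'$ is a valid partial latin rectangle, and by construction column $j$ of $P'$ contains all of $A_j$. For the density, each row of $P'$ has at most $\epsilon n + D = (\epsilon + 3\delta) n \le 3(\delta+\epsilon)n$ filled cells; each column has at most $\epsilon m + |A_j| < (\epsilon + \delta) m$; and each symbol $k$ occurs at most $\epsilon m + |\{j : k \in A_j\}| < (\epsilon + \delta) m$ times. Hence $P'$ is $3(\delta+\epsilon)$-dense. The calibration of $D$ is the delicate point: a smaller $D$ would tighten the row-density bound but make (c) exclude too many rows, while a larger $D$ does the reverse; the choice $D = 3\delta n$ makes both constraints tight and uses the full slack of the hypothesis.
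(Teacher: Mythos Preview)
Your proof is correct but follows a different strategy from the paper. The paper processes the columns one at a time: for column $j$ it restricts attention to the $\lceil(\epsilon+2\delta)m\rceil$ rows that are currently sparsest, builds a bipartite availability graph between $A_j$ and those rows, and invokes Hall's theorem to place all of $A_j$ simultaneously. The row-density bound is then obtained \emph{a posteriori} by an averaging argument: if some row exceeded $3(\epsilon+\delta)n$ entries, then at the moment of its last insertion at least $(1-\epsilon-2\delta)m$ other rows would already be too full, contradicting the total count of filled cells.

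Your approach is more elementary: you drop Hall's theorem entirely and instead enforce the row bound directly via the explicit budget $D=3\delta n$, then verify feasibility by a union bound on rows excluded under (a), (b), (c). This trades the matching lemma for a pigeonhole estimate on how many rows can be saturated, and it is here that the hypothesis $\epsilon+\delta<\tfrac13$ is consumed in one clean stroke. The paper's route has the mild aesthetic advantage of handling an entire column at once; yours has the advantage of avoiding any appeal to Hall and making the role of the constant $\tfrac16$ completely transparent. One small point to tidy: if $3\delta n$ is not an integer, a row passing (c) with $\lfloor D\rfloor$ new entries can end with $\lceil D\rceil$ after the placement, so the row bound is really $\epsilon n+\lceil 3\delta n\rceil$; taking $D=\lfloor 3\delta n\rfloor$ from the outset removes this wrinkle without affecting the $m/3$ estimate.
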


\begin{proof}
We construct $P'$ from $P$ by adding symbols from $A_j$ one column at a time.  It can be assumed that $A_j$ is disjoint from the set of symbols already in column $j$ of $P$ by replacing $A_j$ with a smaller set. Suppose we have extended $P$ by $j-1$ columns for $1 \le j \le n$, and let $P_j$ be the resulting array.  We may sort the rows of $P_j$ in weakly increasing order of how many symbols are in each.  Let $B_j$ consist of the first 
$\lceil (\epsilon+2\delta)m \rceil$ row indices; those correspond to rows with the fewest number of symbols.  Let $G_j$ denote the bipartite graph with vertex partition $A_j,B_j$ and an edge drawn between $k \in A_j$ and $r \in B_j$ if and only if symbol $k$ is available to be placed in row $r$ of column $j$.  We claim that $G_j$ has a matching that uses every element of $A_j$.  Consider symbol $k$ in $A_j$.  It can be placed in any row that doesn't already have symbol $k$.  But $k$ appears in at most $\epsilon m$ rows of $P$ and was previously added at most $\delta m$ times in forming $P_j$.  Therefore,
$$\deg_{G_j}(k) \ge (\epsilon+2\delta)m-\epsilon m - \delta m = \delta m > |A_j|.$$
So our claim of the existence of the matching follows by Hall's Theorem.  Let $P'$ be the resulting array after extending $P_n$.  In checking that $P'$ is $3(\epsilon+\delta)$-dense, the column condition and symbol condition follow immediately from $P$ being $\epsilon$-dense and the hypotheses on the sets $A_j$.  Suppose row $r$ contains more than $3(\epsilon+\delta)n$ symbols.  By choice of $B_j$, when the last symbol was added to row $r$, there were at least $\lfloor(1-\epsilon-2\delta)m \rfloor$ other rows with at least $3(\delta+\epsilon)n-1 \ge 2(\delta+\epsilon)n$ symbols. (Here, we used $\epsilon n \ge 1$ since otherwise $P$ is empty.)  These rows, along with row $r$, would account for at least $\frac{m}{2} \times 2n(\delta+\epsilon) = mn(\delta+\epsilon)$ filled entries.  But this exceeds the total number of filled entries in $P'$, leading to a contradiction.
\end{proof}

We are now ready for a result on fractional completion for the case of thin boxes without a density assumptions on bundles.  The idea is to use Lemma~\ref{lem:extension} to first add symbols with low availability to boxes in the same column bundle, producing a partial Sudoku with the $(1-\delta)$-availability property.  This can then be completed as before using the estimates in  Lemmas~\ref{lem:DeltaM-sparse} and \ref{lem:Kprime-sparse}.  

\begin{prop}
For each $w \ge 2$, there exists a constant $\epsilon=\epsilon(w)$ such that every sufficiently large partial Sudoku of type $(n/w,w)$ in which every row, column, symbol and box is used at most $\epsilon n$ times admits a fractional completion.
\end{prop}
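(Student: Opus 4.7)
The plan is to reduce to the setting handled by Theorem~\ref{thm:main} through an extension of $S$ to a larger partial Sudoku $S'$. The key observation is that the hypothesis permits large symbol counts in a column bundle (unlike the Sudoku $\epsilon$-density condition from Section~\ref{sec:intro}), and that for a box--symbol edge $\{b_\ell, s_k\}$ of $G_S$ in which $s_k$ occurs $M'$ times in the column bundle of $b_\ell$, each such occurrence blocks an entire column of $b_\ell$ ($h$ cells), so the $(1-\delta)$-availability property fails whenever $M'$ lies in the intermediate range $(\delta w, w-1]$. However, if $s_k$ is \emph{fully used} in a column bundle (occurring exactly $w$ times), then no box--symbol edges from that bundle to $s_k$ remain in $G_S$, so the offending edges disappear entirely.

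I would therefore fix $\delta < 1/101$ and choose $\epsilon = \epsilon(w)$ of order $\delta/w$, and preprocess $S$ into an extension $S'$ in which, for every column bundle, each symbol occurs either at most $\delta w$ times or exactly $w$ times. For each ``bad'' pair $(CB, k)$ with $k$ occurring $M'_k \in (\delta w, w)$ times in a column bundle $CB$, the preprocessing adds the missing $w - M'_k$ copies of $k$ in $CB$. A straightforward double count, using that every symbol is used at most $\epsilon n$ times, bounds the number of bad symbols per bundle by $\epsilon n/\delta$ and bounds the number of bundles in which a given symbol is bad by $\epsilon n/(\delta w)$. To place the new copies, I would pick, for each bad pair, any bijection between the $w - M'_k$ boxes of $CB$ lacking $k$ and the $w - M'_k$ columns of $CB$ lacking $k$, thereby fixing both the box and the column of each new occurrence. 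The remaining choice of row inside each box is made by applying Lemma~\ref{lem:extension} one box at a time, treating the box as a small partial latin rectangle and taking $A_c$ to be the set of bad symbols assigned to column $c$ of that box.

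Once $S'$ is constructed, I would verify the $(1-\delta)$-availability property edge-type by edge-type. The row--column and row--symbol edges are handled directly by the resulting $O(\epsilon/\delta)$-density of $S'$ together with the smallness of $\epsilon w$ (the latter controlling the ``boxes along a row'' contribution in the row--symbol case, which is the only place where the row bundle would otherwise intervene). The column--symbol and box--symbol edges are exactly where the preprocessing is needed: the dichotomy ``at most $\delta w$ or fully used'' in each column bundle precisely provides the missing bound on $M'$. With the $(1-\delta)$-availability property in hand, the perturbation framework of Section~\ref{sec:perturb}, via Lemmas~\ref{lem:DeltaM-sparse}, \ref{lem:Kprime-sparse} and \ref{lem:perturb} applied to $S'$, then produces a nonnegative solution to $M_{S'} \mathbf{x}' = \mathds{1}$, yielding a fractional completion of $S'$ and hence, since $S \subseteq S'$, of $S$.

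The main obstacle is the extension step. Lemma~\ref{lem:extension} is stated purely for latin rectangles and has no mechanism for the Sudoku box constraint; the box--column bijection above is the device I would use to disentangle the two constraints, letting the lemma apply at the scale of a single box. However, the bookkeeping to guarantee that the density of $S'$ stays within the budget absorbed by Lemma~\ref{lem:Kprime-sparse}, and that the $101\epsilon/2$ quantity in Proposition~\ref{prop:Ainv-DeltaA} (now read with $\epsilon$ replaced by the post-extension density) remains below $1/2$, will force $\epsilon = \epsilon(w)$ to be inversely polynomial in $w$, and the dependence of the constants on $w$ will need to be tracked with some care.
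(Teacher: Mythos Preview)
Your high-level plan---extend $S$ to some $S'$ with the $(1-\delta)$-availability property, then run the perturbation argument of Section~\ref{sec:perturb}---is exactly the paper's strategy. Two steps in your execution, however, do not go through as written.

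\textbf{The extension step.} The paper applies Lemma~\ref{lem:extension} not per box but per \emph{row bundle}: each row bundle is an honest $h\times n$ partial latin rectangle (with $h=n/w\le n$), and one adds to it, in the columns of each column bundle, \emph{every} symbol already present anywhere in that column bundle, not only the ``bad'' ones. This yields the cleaner conclusion that any symbol occurring in a column bundle occurs in every box of that bundle. Your ``one box at a time'' invocation of Lemma~\ref{lem:extension} does not match its hypotheses: a single box is $h\times w$ with symbols drawn from $[hw]$, while the lemma requires the number of columns to equal the symbol alphabet and to be at least the number of rows; moreover the row constraint for placing $k$ depends on the whole row of $S$, which a per-box rectangle does not see. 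A direct Hall argument per box could be made to work, but it is not an application of Lemma~\ref{lem:extension} as stated.

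\textbf{The perturbation step.} Citing Lemma~\ref{lem:Kprime-sparse} for $S'$ is not legitimate: after your (or the paper's) extension, any saturated symbol is used exactly $w$ times in its column bundle, so $S'$ violates the column-bundle clause of Sudoku $\epsilon$-density and the lemma's hypothesis fails. The paper handles this by replacing $K'$ with $K''$, which additionally zeroes out all rows indexed by $e\notin E(G_{S'})$. Then for the relations $i\in\{34,54,58,66\}$ carrying the `scb' feature, $(e,f)\in R_i$ with $e\in E(G_{S'})$ forces $f\in E(G_{S'})$ by the saturation property, so those terms vanish in $\|K''\|_\infty$. Without this modification the bound on $\|K'\|_\infty$ can blow up on rows indexed outside $E(G_{S'})$, and the hypothesis of Lemma~\ref{lem:perturb} is not verified.
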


\begin{proof}
Let $S$ be an $\epsilon$-dense partial Sudoku of type $(n/w,w)$ and let $P$ be the restriction of $S$ to the top row bundle.  Then $P$ is an $m \times n$ partial latin rectangle, where $m=n/w$, and $P$ is $w \epsilon$-dense.  Consider the column bundles of $S$.  For the $i$th column bundle, let $Z_i$ be the set of symbols which occur in this bundle outside of $P$.  Our strategy is to add the symbols of $Z_i$ to the $i$th column bundle of $P$.  Let $A_j$, $j=1,\dots,n$, be any sets of symbols whose disjoint union over the $i$th bundle equals $Z_i$ for each $i$.  Note that $|A_j| \le |Z_i| \le \epsilon nw =\epsilon mw^2$ and every symbol appears in at most $\epsilon n = \epsilon mw$ of the $A_j$.  By Lemma~\ref{lem:extension}, $P$ can have the entries of $A_j$ added to column $j$ in such a way that the resulting latin rectangle $P'$ is $O(w^2) \epsilon$-dense.  Let us carry out the same procedure for each of the $w$ row bundles of $S$.  This produces a partial Sudoku $S'$ with the following properties:
(1) every row, column, symbol and box is used at most $O(w^3) \epsilon n$ times in $S'$; (2) for every box $b$ of $S'$, the symbols that occur in the column bundle containing $b$ also occur within $b$ itself.  In particular, (2) says that a column-symbol edge $\{c_j,s_k\} \in  E(G_{S'})$ loses no options for tiles because of symbol $k$ occurring in a different box of the column bundle containing $c_j$.  Similarly, a box-symbol edge $\{b_\ell,s_k\} \in  E(G_{S'})$ loses no options for tiles.  This means $S'$ has the $(1-\delta)$-availability property, where $\delta =  C_1\epsilon$ for some constant $C_1$ depending only on $w$. 
Let us construct the matrix $\Delta M$ based on $S'$ as in Section~\ref{sec:perturb1}. By Lemma~\ref{lem:DeltaM-sparse}, we have
$\|\Delta M \|_\infty < 4 C_1 \epsilon n$.   We make one minor change to the perturbation set-up of Section~\ref{sec:perturb2}.  Define $\Delta A = \Delta M + \eta K''$, where
$$K’'(e,f)=\begin{cases} 0 & \text{if } e \not\in E(G_{S'}) \text{ or } f \in E(G_{S'});\\
K(e,f) & \text{otherwise}.
\end{cases}$$
For $\|K''\|_\infty$, it is straightforward to adapt Lemma~\ref{lem:Kprime-sparse} to get a weaker upper bound of the form $\|K''\|_\infty < C_2 \epsilon$, where $C_2$ depends on $w$.  For this, a few additional remarks are useful.  First, $K''$ vanishes on the diagonal, so receives no contribution from relations $R_i$, $i \in \{1,16,32,62\}$.  Next, $K''$ also vanishes on entries supported by those relations $R_i$, $i \in \{34,54,58,66\}$ having `scb' constraints.  This is because $(e,f) \in R_i$ with $e \in E(G_{S'})$ implies $f \in E(G_{S'})$ by our construction of $S'$.  Finally, since $w$ is fixed and $h$ is large, the number of nonzero entries of $K''$ supported by relation $R_4$ is bounded by $\epsilon h$ times a function of $w$, from sparsity of each column of $S'$.  Bounds on contributions from the other relations are as in Table~\ref{tab:K-est}.

Letting $A=M+\eta K$, for (say) $\eta=n$, the bound in Lemma~\ref{lem:A-inv-bound} gives $\|A^{-1}\|_\infty < C_3 n^{-1}$.  Choose $\epsilon(w)=1/2C_3(4C_1+C_2)$.  Then with this choice of $\epsilon$, we have
$$\| A^{-1} \Delta A \|_\infty \le \| A^{-1}\|_\infty (\|\Delta M \|_\infty + \|n K''\|_\infty) <  C_3(4C_1+C_2)\epsilon = \tfrac{1}{2}.$$  
It follows from Lemma~\ref{lem:perturb} that
the system $(A-\Delta A) \mathbf{x} = \mathds{1}$ has a nonnegative solution $\mathbf{x}$, and we finish the proof as before by restricting $\mathbf{x}$ to the edges of $G_S$.
\end{proof}

For simplicity, we have omitted explicit estimates on the constants $C_i$.  Probably the constant $C_1$ related to our construction of $S'$ is the main place where an improvement could be made.

\section{Variations and concluding remarks}
\label{sec:vars}

Suppose we generalize our setting so that each Sudoku box/cage is an arbitrary polyomino of $n$ cells.  Most of our set-up stays the same, except that the $n$-to-$1$ function $\mathrm{box}(i,j)$ mapping cells to boxes changes, say to $\mathrm{box}'(i,j)$.  If this change is sufficiently small, we can reasonably expect the same perturbation methods to give a fractional completion guarantee for sparse partial Sudoku of this generalized type.

To add some precision, let us define a polyomino Sudoku as above to have $\alpha$-\emph{approximate} type $(h,w)$ if, for each box $\ell$, the symmetric difference between $\mathrm{box}^{-1}(\ell)$ and $(\mathrm{box'})^{-1}(\ell)$ can be covered by $\alpha h$ rows and $\alpha w$ columns. The $0$-approximate case coincides with our standard setting of rectangular boxes.
Let $S$ be an $\alpha$-approximate partial Sudoku and define the matrix $M_S$ as before; that is, for unused edges $e$ and $f$, $M_S(e,f)$ equals the number of available tiles containing both $e$ and $f$.
Let $M'$ denote the $4n^2 \times 4n^2$ matrix for the empty Sudoku with polyomino boxes defined by $\mathrm{box}'$, and let $M$ be our usual matrix for the case of $h \times w$ boxes.  We note a few observations on $M'$:

\vspace{-11pt}
\begin{itemize}
\item
$M'$ agrees with $M$ on 
\begin{itemize}
\item
diagonal entries (each equals $n$);
\item
all entries indexed by edges of type row-column, row-symbol, or column-symbol (since boxes are not involved);
\end{itemize}
\item
if $\{e,f\}=\{r_is_k,b_\ell s_k\}$, then $M'(e,f)$ counts the cells shared between row $i$ and box $\ell$;
\item
if $\{e,f\}=\{c_js_k,b_\ell s_k\}$, then $M'(e,f)$ counts the cells shared between column $j$ and box $\ell$;
\item
if $\{e,f\}=\{r_ic_j,b_\ell s_k\}$, then $M'(e,f)=1$ if $\mathrm{box}(i,j)=\ell$ and $M'(e,f)=0$ otherwise.
\end{itemize}
Under the $\alpha$-approximate assumption, the above gives
\begin{equation}
\label{eq:alpha}
\|M-M'\|_\infty \le 2(\alpha h)w+2(\alpha w)h = 4\alpha n.
\end{equation}
Assume $S$ has the $(1-\delta)$-availability property.  Construct $\widetilde{M}$ using $M_S$ and $M$ as in \eqref{eq:M-wiggle}.  From \eqref{eq:alpha}, Lemma~\ref{lem:DeltaM-sparse}, and the triangle inequality,
\begin{equation}
\label{eq:triangle-ineq}
\|M-\widetilde{M}\|_\infty \le \|M-M'\|_\infty + \|M'-\widetilde{M}\|_\infty < 4(\alpha+\delta)n.
\end{equation}
Plugging this into the perturbation methods used earlier, we are able to get a variant on our main result for the approximate rectangular setting.  For brevity, we give a conservative statement without explicit constants.  Here, the $\epsilon$-density condition on row/column bundles should be taken as an adaptation of that for rectangular boxes.

\begin{prop}
There exist positive constants $\alpha$ and $\epsilon$ such that every $\epsilon$-dense $\alpha$-approximate partial Sudoku with large $h$ and $w$ has a fractional completion. 
\end{prop}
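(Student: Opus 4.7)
The plan is to mirror the proof of Theorem~\ref{thm:main}, with the deviation between rectangular and polyomino boxes absorbed into the perturbation matrix via \eqref{eq:triangle-ineq}.

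First, I would note that an $\epsilon$-dense $\alpha$-approximate partial Sudoku still enjoys an availability property: a straightforward adaptation of the argument following Lemma~\ref{lem:eps-GS}, accounting for the fact that each polyomino box differs from its rectangular prototype by cells covered by at most $\alpha h$ rows and $\alpha w$ columns, yields the $(1-\delta)$-availability property with $\delta = O(\epsilon + \alpha)$. Now set $A = M + \eta K$ with $\eta = 2n/3$, where $M$ and $K$ are the \emph{rectangular} empty-Sudoku matrix and its kernel projector. Construct $\widetilde{M}$ from $M_S$ and the rectangular $M$ as in \eqref{eq:M-wiggle}, put $\Delta M = M - \widetilde{M}$, and set $\Delta A = \Delta M + \eta K'$ exactly as in Section~\ref{sec:perturb2}.

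Next, I would combine Lemma~\ref{lem:A-inv-bound}, the bound $\|\Delta M\|_\infty < 4(\alpha+\delta)n$ from \eqref{eq:triangle-ineq}, the identity $A^{-1}(\eta K') = K'$ from \eqref{eq:ainv-kprime}, and a polyomino analog of Lemma~\ref{lem:Kprime-sparse} giving $\|K'\|_\infty = O(\epsilon) + o(1)$, to obtain
\[
\|A^{-1}\Delta A\|_\infty \;\le\; \tfrac{15}{4n}\cdot 4(\alpha+\delta)n + \|K'\|_\infty \;\le\; 15(\alpha+\delta) + O(\epsilon) + o(1).
\]
Choosing $\alpha$ and $\epsilon$ sufficiently small makes this less than $1/2$, so Lemma~\ref{lem:perturb}(2) yields a nonnegative $\mathbf{x}$ with $(A-\Delta A)\mathbf{x} = A\mathds{1} = 4n\mathds{1}$. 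Block lower-triangularity with respect to edges vs.\ non-edges of $G_S$ then gives $(M_S + \eta K[S])\mathbf{x}' = 4n\mathds{1}$ upon restricting to $\mathbf{x}' = \mathbf{x}|_{E(G_S)}$.

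The hard part will be the concluding application of Lemma~\ref{lem:shift}, which demands $K[S]M_S = O$ and $K[S]\mathds{1} = \mathbf{0}$. In the proof of Proposition~\ref{prop:Kres-MG} these identities hinge on the relation $LW_S = WQ$, which uses that both sides encode \emph{rectangular} box structure; with polyomino tiles, the correct relation is $LW_S = W^\star Q^\star$ where $W^\star, Q^\star$ are the polyomino inclusion matrices. To resolve this, I would replace $K$ throughout by the kernel projector $K^\star$ of the polyomino empty matrix $M^\star = W^\star(W^\star)^\top$. Since $\|M - M^\star\|_\infty \le 4\alpha n$ by \eqref{eq:alpha} and $M$ has well-separated eigenvalues $\{0,n,2n,3n,4n\}$ by Proposition~\ref{prop:Mspec}, a standard spectral perturbation argument gives $\|K^\star - K\|_\infty = O(\alpha)$, so every norm estimate used above degrades by only $O(\alpha)$. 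The polyomino analog of Proposition~\ref{prop:Kres-MG} then holds verbatim (with $W, Q$ replaced by $W^\star, Q^\star$), Lemma~\ref{lem:shift} yields $M_S\mathbf{x}' = \mathds{1}$, and $W_S^\top \mathbf{x}'$ is the desired fractional completion.
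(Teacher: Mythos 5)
Your first two paragraphs track the paper's own (very terse) argument: the paper likewise keeps the rectangular $M$ and $K=E_0$ as the base point, absorbs the polyomino deviation into the perturbation via \eqref{eq:alpha} and \eqref{eq:triangle-ineq}, and then appeals to the machinery of Section~\ref{sec:perturb}. You are also right to flag the final step as the delicate one: with polyomino tiles the relation $LW_S=WQ$ fails, because the indicator vector of a polyomino tile differs from that of the rectangular tile on the same cell by $\mathbf{e}_{b_{\ell'}s_k}-\mathbf{e}_{b_{\ell}s_k}$, which has a nonzero component in $\ker(M)$ (pair it against a type (B) kernel vector). So Proposition~\ref{prop:Kres-MG} does not carry over verbatim with the rectangular $K$, and the paper glosses over this.

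However, your repair does not work. The claim $\|K^\star-K\|_\infty=O(\alpha)$ is unjustified and is false in general. The type (C) kernel vectors of $M$, which balance a symbol's occurrences in a row bundle against the boxes of that bundle, are destroyed as soon as any polyomino box crosses a bundle boundary, and $\dim\ker(M^\star)$ need not equal $\dim\ker(M)$; the paper's own Pentadoku computation (nullity $27$ or $23$ depending on the tiling) shows the nullity is sensitive to the box arrangement. Two orthogonal projectors of different ranks satisfy $\|K-K^\star\|_2\ge 1$, and since both are symmetric, $\|K-K^\star\|_\infty\ge\|K-K^\star\|_2\ge 1$, so no choice of small $\alpha$ rescues the estimate. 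Even when the ranks happen to agree, a Davis--Kahan-type argument using the gap $n$ controls only the $2$-norm, and passing to the $\infty$-norm costs a factor of up to $\sqrt{4n^2}=2n$. Moreover, once $K$ is replaced by $K^\star$, the identities \eqref{eq:A-inv} and \eqref{eq:ainv-kprime} --- which rest on $K$ being the orthogonal idempotent complementary to $E_1,\dots,E_4$ for the \emph{rectangular} $M$ --- no longer hold, so Lemma~\ref{lem:A-inv-bound} and your displayed bound for $\|A^{-1}\Delta A\|_\infty$ would all have to be re-derived for $M^\star$, which does not lie in the adjacency algebra $\mathfrak{A}$. A correct completion of the argument must establish the shift identity and the orthogonality relations of Proposition~\ref{prop:Kres-MG} intrinsically for the polyomino system (i.e., control $\ker(M^\star)$ and its interaction with $M_S$ directly), rather than transporting them from the rectangular case via a projector-perturbation bound.
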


It is possibly of interest to consider properties of the matrix $M'$ for specific box arrangements.  A `Pentadoku' is a $5 \times 5$ Sudoku-like puzzle whose cages are pentomino shapes.  Each cage (in addition to each line) must contain the numbers from $1$ to $5$ exactly once. Figure~\ref{fig:pentadoku} shows an example of a completed Pentadoku puzzle.

\begin{figure}[htbp]
\begin{minipage}[c]{0.45\linewidth}
\begin{center}
\begin{tikzpicture}[scale=0.8]
\foreach \a in {0,1,...,5}
                \draw (\a,0)--(\a,5);
\foreach \a in {0,1,...,5}
                \draw (0,\a)--(5,\a);
\foreach \a in {0,5}
                \draw[line width=2pt] (\a,0)--(\a,5);
\foreach \a in {0,5}
                \draw[line width=2pt] (0,\a)--(5,\a);
\draw[line width=2pt] (0,3)--(1,3)--(1,2)--(2,2)--(2,0);
\draw[line width=2pt] (1,5)--(1,4)--(2,4)--(2,3)--(3,3)--(3,2)--(2,2);
\draw[line width=2pt] (5,2)--(4,2)--(4,1)--(3,1)--(3,4)--(5,4);
\node at (0.5,4.5) {\small 1};
\node at (1.5,4.5) {\small 2};
\node at (2.5,4.5) {\small 3};
\node at (3.5,4.5) {\small 4};
\node at (4.5,4.5) {\small 5};
\node at (0.5,3.5) {\small 4};
\node at (1.5,3.5) {\small 5};
\node at (2.5,3.5) {\small 1};
\node at (3.5,3.5) {\small 2};
\node at (4.5,3.5) {\small 3};
\node at (0.5,2.5) {\small 5};
\node at (1.5,2.5) {\small 3};
\node at (2.5,2.5) {\small 2};
\node at (3.5,2.5) {\small 1};
\node at (4.5,2.5) {\small 4};
\node at (0.5,1.5) {\small 3};
\node at (1.5,1.5) {\small 1};
\node at (2.5,1.5) {\small 4};
\node at (3.5,1.5) {\small 5};
\node at (4.5,1.5) {\small 2};
\node at (0.5,0.5) {\small 2};
\node at (1.5,0.5) {\small 4};
\node at (2.5,0.5) {\small 5};
\node at (3.5,0.5) {\small 3};
\node at (4.5,0.5) {\small 1};
\end{tikzpicture} 
\end{center}
\caption{A completed Pentadoku puzzle}
\label{fig:pentadoku}
\end{minipage}
\hfill
\begin{minipage}[c]{0.45\linewidth}
\begin{center}
\begin{tikzpicture}[scale=0.8]
\foreach \a in {0,1,...,6}
                \draw (\a,0)--(\a,6);
\foreach \a in {0,1,...,6}
                \draw (0,\a)--(6,\a);
\foreach \a in {0,6}
                \draw[line width=2pt] (\a,0)--(\a,6);
\foreach \a in {0,6}
                \draw[line width=2pt] (0,\a)--(6,\a);
\foreach \a in {2,4}
		\draw[red,line width=2pt] (0,\a)--(6,\a);
\draw[red,line width=2pt] (3,0)--(3,6);
\foreach \a in {2,4}
		\draw[blue,line width=2pt] (\a,0)--(\a,6);
\draw[blue,line width=2pt] (0,3)--(6,3);
\node at (0.5,5.5) {\small 1};
\node at (1.5,5.5) {\small 2};
\node at (2.5,5.5) {\small 3};
\node at (3.5,5.5) {\small 4};
\node at (4.5,5.5) {\small 5};
\node at (5.5,5.5) {\small 6};
\node at (0.5,4.5) {\small 4};
\node at (1.5,4.5) {\small 5};
\node at (2.5,4.5) {\small 6};
\node at (3.5,4.5) {\small 1};
\node at (4.5,4.5) {\small 2};
\node at (5.5,4.5) {\small 3};
\node at (0.5,3.5) {\small 3};
\node at (1.5,3.5) {\small 6};
\node at (2.5,3.5) {\small 2};
\node at (3.5,3.5) {\small 5};
\node at (4.5,3.5) {\small 1};
\node at (5.5,3.5) {\small 4};
\node at (0.5,2.5) {\small 5};
\node at (1.5,2.5) {\small 1};
\node at (2.5,2.5) {\small 4};
\node at (3.5,2.5) {\small 3};
\node at (4.5,2.5) {\small 6};
\node at (5.5,2.5) {\small 2};
\node at (0.5,1.5) {\small 2};
\node at (1.5,1.5) {\small 3};
\node at (2.5,1.5) {\small 1};
\node at (3.5,1.5) {\small 6};
\node at (4.5,1.5) {\small 4};
\node at (5.5,1.5) {\small 5};
\node at (0.5,0.5) {\small 6};
\node at (1.5,0.5) {\small 4};
\node at (2.5,0.5) {\small 5};
\node at (3.5,0.5) {\small 2};
\node at (4.5,0.5) {\small 3};
\node at (5.5,0.5) {\small 1};
\end{tikzpicture} 
\caption{A puzzle with simultaneous box conditions}
\label{fig:overlayed-cages}
\end{center}
\end{minipage}
\end{figure}

Many different box/cage arrangements are possible.  The tilings of a $5 \times 5$ grid by \emph{distinct} pentominoes are enumerated and displayed at \cite{Isomer}.  Setting $n=5$, we computed the $100 \times 100$ matrix $M'$ corresponding to each of these box arrangements.  The nullity of $M'$ was found to equal $27$ or $23$, depending on whether an `I' tile is present or not, respectively.  Curiously, this matches what \eqref{eq:nullity} would produce for rectangular boxes if we were to substitute $h+w=3$ or $2$, respectively.

A generalization which we have not considered could allow two or more simultaneous box patterns.  This is natural because the row and column conditions in a latin square can already be viewed as degenerate box conditions.  An example for $n=6$ with both $2 \times 3$ and $3 \times 2$ boxes is given in Figure~\ref{fig:overlayed-cages}. Using a generalized notion of tile and a suitably enlarged linear system, similar methods as in this paper could apply, at least in principle.

We next make a brief remark about the coherent configuration we used in Section~\ref{sec:aa}.  The number of relations was quite large, and pushed many of our estimates and calculations into computer-assisted territory.  It is possible that the complexity of the algebra can be reduced.  For instance, our matrix $M$ and the eigenprojectors were all symmetric, meaning that relations pair up with their transpose.  Also, every row-column pair uniquely determines a box, so it is plausible that box-symbol edges could be eliminated.  We made an attempt to work with the Schur complement of $M$ relative to box-symbol edges, but some technical challenges discouraged us from going further with that approach.

Methods of algebraic graph theory have been applied to Sudoku before, but in a slightly different way.  A \emph{Sudoku graph} has $n^2$ vertices corresponding to cells, and two vertices are declared adjacent if they share the same row, column or box.  Eigenvalues and eigenvectors of Sudoku graphs have been investigated in \cite{AAS,KSU,Sander}.  Although they have integral eigenvalues and Kronecker-structured $\{\pm 1,0\}$-valued eigenvectors, as ours, we could see no way to use the Sudoku graph alone to build the linear system for completion.  Still, it would be interesting to explore the Sudoku graph in the context of completing partial Sudoku.

As a last remark, our results are only about fractional completion.  For partial latin squares, the iterative absorption methods of \cite{BKLOT} are able to convert a sparseness guarantee for fractional completion into a guarantee for (exact) completion.  We do not know whether these or other methods could work for the Sudoku setting.

\section*{Acknowledgements}

Research of Peter Dukes is supported by NSERC Discovery Grant RGPIN--2017--03891.  


\end{document}